\newcommand{\e}{\varepsilon}
\newcommand{\R}{\mathbb{R}}
\newcommand{\RN}{{\mathbb{R}^N}}
\renewcommand{\l }{\lambda}
\newcommand{\Ua}{{\mathcal{U}_{2}}}
\newcommand{\Uap}{{\mathcal{U}_{2, P_2}}}
\newcommand{\Uam}{{\mathcal{U}_{2, -P_2}}}
\newcommand{\Ub}{{\mathcal{U}_{3}}}
\newcommand{\Ubp}{{\mathcal{U}_{3, P_3}}}
\newcommand{\Ubm}{{\mathcal{U}_{3, -P_3}}}
\newcommand{\Ui}{{\mathcal{U}_{i, \e}}}
\newcommand{\Uae}{{\mathcal{U}_{2, \e}}}
\newcommand{\Ube}{{\mathcal{U}_{3, \e}}}
\newcommand{\Um}{{\mathcal{U}_{i}}}
\newcommand{\Ump}{{\mathcal{U}_{i, P_i}}}
\newcommand{\Umm}{{\mathcal{U}_{i, -P_i}}}
\newcommand{\E}{\mathcal{E}}
\newcommand{\beq}{\begin{equation}}
\newcommand{\eeq}{\end{equation}}
\newtheorem{theorem}{Theorem}[section]
\newtheorem*{theorem*}{Theorem}
\newtheorem{lemma}[theorem]{Lemma}
\newtheorem{proposition}[theorem]{Proposition}
\theoremstyle{definition}
\newtheorem{remark}[theorem]{Remark}
\title[Lotka-Volterra type interactions ]{Partially concentrating solutions for systems with Lotka-Volterra type interactions }
\author{Sabrina Caputo}
\address[S. Caputo]{Dipartimento di Matematica, Universit\`a degli studi di Bari ``Aldo Moro'', via Edoardo Orabona 4,70125 Bari, Italy}
\email{s.caputo28@studenti.uniba.it}
\author{Giusi Vaira}
\address[G.Vaira]
{Dipartimento di Matematica, Universit\`a degli studi di Bari ``Aldo Moro'', via Edoardo Orabona 4,70125 Bari, Italy}
\email{giusi.vaira@uniba.it}
\date{}
\subjclass[2010]{35B25, 35J47, 35Q55}
\keywords{Nonlinear systems with Lotka-Volterra type interactions;  singularly perturbed problems; Lyapunov-Schmidt reduction.}
\thanks{The last author is partially supported by MUR-PRIN-2022 PNRR ``Linear and Nonlinear PDE's: New directions and Applications'', P2022YFAJH and by the INdAM-GNAMPA group.}
\begin{document}
\maketitle

\begin{abstract}
In this paper we consider the existence of standing waves for a coupled system of $k$  equations with Lotka-Volterra type interaction. We prove the existence of a standing wave solution with all nontrivial components satisfying a prescribed asymptotic profile. In particular, the $k-1$-last components of such solution exhibits a concentrating behavior, while the first one keeps a quantum nature. We analyze first in detail the result with three equations since this is the first case in which the coupling has a role contrary to what happens when only two densities appear. We also discuss the existence of solutions of this form for systems with other kind of couplings making a comparison with Lotka-Volterra type systems. 
\end{abstract}
\section{Introduction}
Several physical phenomena can be described by a certain number of densities (of mass, population, probability, ...) distributed in a domain or on all the space and subject to laws of diffusion, reaction and competitive interaction. Whenever the competitive interaction is the prevailing phenomenon, the several densities cannot coexist and tend to segregate, hence determining a partition of the domain ({\it Gause's experimental principle of competitive exclusion (1932)}).\\ As a model problem, we will consider a system of stationary equations 
with Lotka-Volterra type interaction. This is of particular interest since these systems are the most popular mathematical models for the dynamics of many populations subject to spatial diffusion, internal reaction and either cooperative or competitive interactions.\\
Indeed, such models are related with reaction-diffusion systems where the reaction is the sum of an intra-specific term, often expressed by logistic type functions, and an inter-specific interaction one, usually quadratic. The study of this reaction-diffusion system has a long history and there exists a large literature on the subject. However, most of these works are concerned with the case of two species. As far as we know, the study in the case of many competing species has been much more limited, starting from two pioneering papers by Dancer and Du \cite{DD1, DD2} in the 1990s, where the competition of three species were considered.\\ In what follows we will consider a system of $k-$ non-negative densities in all $\mathbb R^N$ subject to diffusion, reaction and competitive or cooperative interaction in the stationary case of the form 
\begin{equation}\label{SYS}
-d_i\Delta u_i=f_i(u_i)+\beta u_i\sum_{j\neq i}a_{ij}u_j\,\,\,i=2,\ldots, k\quad\mbox{in}\,\, \mathbb R^N.
\end{equation}
Here the matrix $(a_{ij})_{ij}$ is the matrix of the interspecific competition coefficients and the parameter $\beta$  measures the strength of the (competitive or cooperative) interaction. Accordingly, we will distinguish between the symmetric (i.e. when $a_{ij} = a_{ji}$, for every $i, j$) and the asymmetric case (i.e. when $a_{ij} \neq a_{ji}$, for some $i, j$). For concreteness we require the reaction terms $f_i$ are of specific form with $f_i(0) = 0$. The coefficient $d_i>0$ represents the diffusion coefficients.\\
Many papers deal with system \eqref{SYS} in the strong competition regime, that is when the parameter $\beta$ diverges to $-\infty$. In this case, the components satisfy uniform bounds in H\"older norms and converge, up to subsequences, to some limit profiles, having disjoint supports: {\it the segregated states}.\\
In the last decade, both the asymptotics and the qualitative properties of the limit segregated profiles have been the object of an intensive study, mostly in the symmetric case, by different teams \cite{CTV, CTV1, CTV2, CKL, SZ, WZ} and recently in the asymmetric case in \cite{TVZ} analyzing the behavior of a new kind of solutions known as spiralling solutions. \\
On the contrary, when one has to show existence of such systems the situation is more complicated since the problem is not variational and the standard technique cannot be used.\\
In this paper we use a different approach in order to establish an existence result for problems like \eqref{SYS}.\\ We inspire to the recent work \cite{PPVV} where a system of two equations but with an interaction (symmetric) of Gross-Pitaevskii type is considered.\\
Indeed, we will consider the case of three equations (in Section \ref{generalizzazione} we discuss the case of an arbitrary numbers of equations) of the following form 
\begin{equation}\label{DS}
\left\{\begin{aligned}-\Delta u_1+V(x)u_1&=\mu_1 u_1^3+u_1\left(a_{12}u_2+a_{13}u_3\right),\quad&\mbox{in}\,\, \mathbb R^N\\
-\e^2\Delta u_2+W_2(x)u_2 &=\mu_2 u_2^3+ u_2\left(a_{21}u_1+\beta a_{23}u_3\right)\,\quad&\mbox{in}\,\, \mathbb R^N\\
-\e^2\Delta u_3+W_3(x)u_3 &=\mu_3 u_3^3+ u_3\left(a_{31}u_1+\beta a_{32}u_2\right)\,\quad&\mbox{in}\,\, \mathbb R^N\end{aligned}\right.
\end{equation}
where  $N=2, 3$ and $a_{ij}\neq a_{ji}$. We assume that $\beta>0$. We remark that $\beta$ multiplies only the interaction between the second and the third component.\\ Hence our study deals with a system with small diffusion (i.e. in the second and in the third equations $d_i=\e^2>0$ while $d_1=1$) and, moreover, $\mu_i>0$ and $a_{21},a_{31}<0$ (this means that the first and the second densities and the first and the third densities repel each others).

In what follows the class of potentials that we consider satisfies the following assumptions.
\begin{itemize}
\item[$\bf{(V_{1})}$] $V(x)=V(|x|)$ is a radially symmetric function satisfying
\begin{equation} \label{eq:V0}
V \in C^{0}(\mathbb R^N) \ \hbox{and}\  \inf_{\R^{N}}V(x)>0.
\end{equation}
Moreover, $V$ is  such that  there  exists $ \Upsilon \in C^{3}(\R^{N})\cap H^{2}(\R^{N})$  unique positive  radial solution to
\begin{equation} \label{eq:V1}
\begin{cases}
-\Delta \Upsilon+V(x)\Upsilon=\mu_1 \Upsilon^{3}
\\
\Upsilon(x)\to 0 \text{ as } |x|\to +\infty,
\end{cases}
\end{equation}
which is non-degenerate in the space $H^{1}_{e}(\R^{N})$ of functions even with respect to each variables, where
\begin{equation}\label{he}
H^{1}_{e}(\R^{N})\!:=\!\left\{u\in H^{1}(\R^N) : u (\!x_{1},\dots, x_{i},\dots x_{N}\!)\!=\!u (\!x_{1}, \dots,-x_{i}, \dots, x_{N}\!), \,i=1,...\,,N\!
\right\}.
\end{equation}
This means that the only solutions to
$$\left\{\begin{aligned}
&- \Delta z +V(x)z =3\mu_1 \Upsilon^2 z\ \hbox{in}\ \mathbb R^N\\
&z\in H^{1}_{e}(\R^{N})\\
\end{aligned}\right.$$
is the trivial one.\\

\item[$\bf{(V_{2})}$] 
The potential $V$ is such that for every $f\in L^m(\R^N)$, $2\leq m<+\infty$, there exists a unique solution $u\in W^{2, m}(\mathbb R^N)$ of the equation
\[
-\Delta u+V(x)u=f,
\]
and
\[
\|u\|_{W^{2, m}(\R^{N})}\leq C\|f\|_{L^{m}(\R^{N})}.
\]

\item[$\bf{(W_i)}$]  $W_i(x)$ are even with respect to all the variables, 
\begin{equation} \label{eq:W0}
W_i \in C^{3}(\R^{N})\ \hbox{and}\ \inf_{\R^{N}}  W_i(x)> 0\,\, \mbox{for any}\,\, i=2, 3.
\end{equation}
\end{itemize}
The main result that we obtain is the following.

\begin{theorem}\label{thm1}
Let $N=2, 3$ and suppose that $\bf{(V_1)}$, $\bf{(V_2)}$ and $\bf{(W_i)}$ hold. Let $$\omega_i(x)=W_i(x)-a_{i1}\Upsilon (x)$$ and set $\omega_i:=\omega_i(0)>0$. Assume that
\begin{itemize}
\item if $\omega_2 \leq \frac{\omega_3}{4}$ we let  $$\partial_{11}^2\omega_2(0)<0$$ and $$\partial_{22}^2\omega_3(0)<0\quad\mbox{and}\quad a_{32}>0$$
$$\partial_{22}^2\omega_3(0)>0\quad\mbox{and}\quad a_{32}<0$$
\item if $\omega_3 \leq \frac{\omega_2}{4}$ we let  $$\partial_{22}^2\omega_3(0)<0$$ and $$\partial_{11}^2\omega_2(0)<0\quad\mbox{and}\quad a_{23}>0$$
$$\partial_{11}^2\omega_2(0)>0\quad\mbox{and}\quad a_{23}<0$$
\item if $\frac{\omega_3}{4}<\omega_2 < 4 \omega_3$ and $\omega_2\neq \omega_3$, we let  $$\partial_{11}^2\omega_2(0)<0\,\, \mbox{and}\,\, \partial_{22}^2\omega_3(0)<0.$$
\item if $\omega_2=\omega_3=\omega_0$ we let $$\partial_{22}^2\omega_3(0)<0\quad\mbox{and}\quad a_{32}>0$$
$$\partial_{22}^2\omega_3(0)>0\quad\mbox{and}\quad a_{32}<0$$
and
$$\partial_{11}^2\omega_2(0)<0\quad\mbox{and}\quad a_{23}>0$$
$$\partial_{11}^2\omega_2(0)>0\quad\mbox{and}\quad a_{23}<0$$
\end{itemize}
Then there exists $\e_0>0$ such that for any $\e\in(0, \e_0)$ there exists $\beta_\e>0$ such that for any $\beta\in (0, \beta_\e)$ the system \eqref{DS} has a positive solution $(u_{1, \e}, u_{2, \e}, u_{3, \e})$ which is even with respect to each variable and having the following asymptotic profile as $\e\to 0$ $$u_{1, \e}(x)\sim \Upsilon (x), \quad u_{i, \e}(x)\sim \Um\left(\frac{x-P_i}{\e}\right)+\Um\left(\frac{x+P_i}{\e}\right),\,\, i=2,3$$ where $\Upsilon $ solves \eqref{eq:V1} and $\Um$ is the positive radial solution of 
\begin{equation}\label{limi}
-\Delta \Um+\omega_i \Um=\mu_i (\Um)^3\, \quad\mbox{in}\,\, \mathbb R^N.
\end{equation}
The peaks $P_i$ and $-P_i$ collapse to the origin as $$\pm P_2=\rho_{2, \e}(\pm 1, 0, 0)\, \mbox{with}\,\, \frac{\rho_{2, \e}}{\e|\ln\e|}\to \mathfrak a \, \mbox{as}\, \e\to 0$$ and $$\pm P_3=\rho_{3, \e}(0, \pm 1, 0)\, \mbox{with}\,\, \frac{\rho_{3, \e}}{\e|\ln\e|}\to \mathfrak b \, \mbox{as}\, \e\to 0$$ where $\mathfrak a$ and $\mathfrak b$ suitably depend on $\omega_i$.
\end{theorem}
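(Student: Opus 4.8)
The plan is to construct the solution by a finite-dimensional Lyapunov–Schmidt reduction around the natural ansatz
\[
u_{1}\approx\Upsilon,\qquad u_{i}\approx\mathcal U_{i,P_i}+\mathcal U_{i,-P_i}\ (i=2,3),
\]
treating the parameters $(\e,\beta,P_2,P_3)$ as the variables to be adjusted. First I would set up the functional-analytic framework: rescale the second and third equations by $x\mapsto \e x$ so that the concentrating profiles become order-one bumps centered at $P_i/\e$, and write the solution as the ansatz plus a remainder $(\phi_1,\phi_2,\phi_3)$ living in the orthogonal complement of the approximate kernel. The approximate kernel is spanned by $\partial_\xi\mathcal U_{i,\xi}$ evaluated at the centers, which encodes translations of the bumps; the nondegeneracy of $\Upsilon$ in $H^1_e(\R^N)$ from $\mathbf{(V_1)}$ and the classical nondegeneracy of the radial ground state $\mathcal U_i$ of \eqref{limi} (uniqueness/nondegeneracy modulo translations) are what make the linearized operator invertible on that complement, uniformly in the small parameters. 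I would then solve the auxiliary (infinite-dimensional) equation for $(\phi_1,\phi_2,\phi_3)=(\phi_1,\phi_2,\phi_3)(\e,\beta,P_2,P_3)$ by a contraction-mapping argument, using the $W^{2,m}$-estimate in $\mathbf{(V_2)}$ for the first equation and standard elliptic estimates in the rescaled variable for the other two, with the key smallness coming from $\e$ and from the fact that $\beta$ multiplies only the $u_2$–$u_3$ coupling, whose two bumps are far apart (distance $\sim|P_2-P_3|/\e\to\infty$) so that interaction term is exponentially small once $\rho_{i,\e}\gtrsim \e|\ln\e|$.

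Next I would carry out the reduced finite-dimensional problem. Plugging $\phi$ back in, the existence of a genuine solution reduces to finding critical points of a reduced energy $\mathcal E_\e(P_2,P_3)$, or equivalently to solving the reduced equations obtained by pairing the error against the approximate kernel. The main computation is the asymptotic expansion of $\mathcal E_\e$: writing $P_2=\rho_2 e_1$, $P_3=\rho_3 e_2$, one expects
\[
\mathcal E_\e(\rho_2,\rho_3)= c_0 + \e^{N}\Big[ \sum_{i=2}^{3}\big(\tfrac12\partial^2_{kk}\omega_i(0)\,\rho_i^{2} - A_i\, e^{-2\rho_i/\e}\big) \;+\; \beta B\, e^{-|\rho_2 e_1-\rho_3 e_2|/\e} \Big]+\text{l.o.t.},
\]
where the quadratic terms come from expanding the potentials $\omega_i(x)=W_i(x)-a_{i1}\Upsilon(x)$ near the origin (here one uses $\Upsilon\in C^3$ and the definition $\omega_i:=\omega_i(0)$), the exponential self-interaction terms $A_i e^{-2\rho_i/\e}$ come from the interaction between the bump at $P_i$ and its mirror at $-P_i$ (positive coefficient, repulsive, pushing the peaks apart), and the cross term with coefficient $\beta B$ records the $u_2$–$u_3$ interaction whose sign is that of $a_{23}$ (or $a_{32}$ in the symmetric pairing). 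Balancing a concave/convex quadratic well against an exponential repulsion forces $\rho_{i,\e}\sim \mathfrak a\,\e|\ln\e|$ (resp. $\mathfrak b$), with $\mathfrak a,\mathfrak b$ determined by the coefficients $\partial^2_{kk}\omega_i(0)$ and $A_i$; this is exactly the asserted rate.

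The delicate point — and the reason for the elaborate case distinction in the statement — is that the reduced system for $(\rho_2,\rho_3)$ does not always decouple: when $\omega_2$ and $\omega_3$ are comparable, $e^{-2\rho_2/\e}$, $e^{-2\rho_3/\e}$ and $e^{-|\rho_2e_1-\rho_3e_2|/\e}$ are of the same order, so the $\beta$-coupling term genuinely competes with the self-interactions and one must show that a nondegenerate critical point of the full two-variable reduced energy persists; the sign conditions on $\partial^2\omega_i(0)$, $a_{23}$, $a_{32}$ are precisely what guarantee (via a degree or implicit-function argument applied to the leading-order reduced map) that such a critical point exists and is stable under the $O(\beta)$ and higher-order perturbations. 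When instead $\omega_2\le\omega_3/4$ (or vice versa) one bump is exponentially much more spread out than the other, the cross term is dominated by the self-interaction of the tighter bump, the two equations essentially decouple at leading order, and a single mixed second-derivative condition suffices for the looser component while the tighter one needs the full concave-well condition. I would organize the proof so that Sections on the linear theory and the auxiliary equation are parameter-uniform, and then devote the final section to the case-by-case analysis of the reduced map, choosing $\beta_\e$ small enough (depending on $\e$) that the leading-order critical point survives; the main obstacle throughout is obtaining the error and interaction estimates with enough precision — sharp exponential rates with correct constants — to see the competition between the algebraic well and the exponential terms, rather than merely qualitative smallness.
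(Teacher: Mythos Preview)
Your overall architecture (rescale, Lyapunov--Schmidt, contraction for the remainder, then solve a two-dimensional reduced problem in $(\rho_2,\rho_3)$) matches the paper's, but two genuine gaps would make your argument fail as written.

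\medskip

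\textbf{The problem is not variational.} The Lotka--Volterra coupling $u_i\sum_{j\ne i}a_{ij}u_j$ with $a_{ij}\ne a_{ji}$ is not the gradient of any potential, so there is no energy functional for \eqref{DS} and hence no reduced energy $\mathcal E_\e(P_2,P_3)$ whose critical points yield solutions. Your displayed expansion, with a \emph{single} cross term $\beta B\,e^{-|\rho_2e_1-\rho_3e_2|/\e}$, would upon differentiation produce symmetric contributions in the two reduced equations; in reality the projections onto $\mathcal Z_\e$ and $\mathcal Y_\e$ carry \emph{different} coefficients $a_{23}$ and $a_{32}$ (and different exponential rates $\sqrt{\omega_3}$, $\sqrt{\omega_2}$). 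The paper works directly with the two scalar reduced equations $\mathfrak c_2=0$, $\mathfrak c_3=0$ and solves each by an intermediate value/sign-change argument, not by locating a critical point. Your ``or equivalently'' is not an equivalence here.

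\medskip

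\textbf{The case mechanism is inverted.} In the regime $\omega_2\le\omega_3/4$ you claim the cross term is dominated and the two equations decouple. What actually happens is the opposite in the $\rho_3$-equation: since $\mathcal U_3$ decays like $e^{-\sqrt{\omega_3}|x|}$ with $\omega_3$ large, its self-interaction $e^{-2\sqrt{\omega_3}\rho_{3}/\e}$ is \emph{beaten} by the cross term, which decays only like $e^{-\sqrt{\omega_2}|\rho_\e|/\e}$. The reduced equation for $\rho_3$ therefore balances the quadratic well $-\partial_{22}^2\omega_3(0)\,\e\rho_3$ against $\beta a_{32}\,e^{-\sqrt{\omega_2}|\rho_\e|/\e}$, not against a self-interaction; this is exactly why the sign of $a_{32}$ must match that of $\partial_{22}^2\omega_3(0)$, and why $\beta$ is not merely ``small enough'' but is taken of a precise order $\beta\sim\e^{b}$ with $b<1$ so that this balance occurs at the claimed scale $\rho_{3}\sim\e|\ln\e|$. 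Treating the coupling as a perturbation to be absorbed by degree stability would lose the solution in this case. (Your exponential rates also omit the factors $\sqrt{\omega_i}$, without which none of the comparisons between self- and cross-interactions can be made.)
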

Theorem \ref{thm1} states the existence of a solution whose first component is a genuine solution to \eqref{eq:V1}, in particular it does not concentrate, and the second and the third components concentrates at two opposite points that collapse to the origin as $\e\to 0$.\\
The first component plays the role of an additional potential in the second and in third equation that are singularly perturbed. This is why we need to introduce a modified potential $\omega_i(x)$. \\ Here we consider the case in which $a_{i1}<0$ for $i=2, 3$ (i.e. repulsive regime) while we can consider $a_{ij}$ with $j\neq 1$ and $i=2, 3$ positive or negative depending on the properties of the modified potentials $\omega_i(x)$ and $\beta$, which describes the strength of the interaction, is sufficiently small. So the interaction between the last two densities is a weak repulsive or attractive interaction. The Theorem is stated in dimensions two and three. For the dimension one, it is needed to correct the ansatz again. Hence, in order to make the paper reasonable, we do not consider this case.
\\
When only two equations are considered in \eqref{DS} then the result that we obtain is the following.
\begin{theorem}\label{thm2}
Let $N=2, 3$ and suppose that $\bf{(V_1)}$, $\bf{(V_2)}$ and $\bf{(W_2)}$ hold. Let $$\omega_2(x)=W_2(x)-a_{21}\Upsilon (x)$$ and set $\omega_2:=\omega_2(0)>0$. Assume that 
  $$\partial_{11}^2\omega_2(0)<0.$$ Then there exists $\e_0>0$ such that for any $\e\in(0, \e_0)$ the system \eqref{DS} with two equations has a positive solution $(u_{1, \e}, u_{2, \e})$ which is even with respect to each variable and having the following asymptotic profile as $\e\to 0$ $$u_{1, \e}(x)\sim \Upsilon (x), \quad u_{2, \e}(x)\sim \Ua\left(\frac{x-P_2}{\e}\right)+\Um\left(\frac{x+P_2}{\e}\right),\,\, $$ where $\Upsilon $ solves \eqref{eq:V1} and $\Ua$ is the positive radial solution of 
\begin{equation}\label{limi}
-\Delta \Ua+\omega_2 \Ua=\mu_2 (\Ua)^3\, \quad\mbox{in}\,\, \mathbb R^N.
\end{equation}
The peaks $P_2$ and $-P_2$ collapse to the origin as $$\pm P_2=\rho_{2, \e}(\pm 1, 0, 0)\, \mbox{with}\,\, \frac{\rho_{2, \e}}{\e|\ln\e|}\to \mathfrak a \, \mbox{as}\, \e\to 0$$ where $\mathfrak a$ suitably depend on $\omega_2$.
\end{theorem}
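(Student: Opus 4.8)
The plan is to prove Theorem~\ref{thm2} via a finite-dimensional Lyapunov--Schmidt reduction, following the scheme of~\cite{PPVV} but with the first component entering as a fixed, non-concentrating potential term. First I would fix the ansatz: write $u_1 = \Upsilon + \phi_1$ and
$$u_2(x) = \Uae\left(\tfrac{x-P_2}{\e}\right) + \Uae\left(\tfrac{x+P_2}{\e}\right) + \phi_2,$$
where $\Uae$ is the suitably rescaled and cut-off bubble built from the solution $\Ua$ of \eqref{limi}, $P_2 = \rho\,(1,0,0)$ with $\rho$ a free parameter in a range comparable to $\e|\ln\e|$, and the remainders $\phi_i$ are forced to lie in a space orthogonal (in the appropriate scalar product) to the kernel generated by the translations $\partial_{x_1}\Uae(\cdot\mp P_2/\e)$ — the only directions surviving the evenness constraint $H^1_e(\R^N)$, since dilations and the other translations are killed by symmetry. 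I would work in the product Sobolev space of even functions, with the $\e$-dependent norm on the second component that makes $\|v\|_\e = (\e^{-N}\int \e^2|\nabla v|^2 + W_2 v^2)^{1/2}$ into the natural one for the singularly perturbed equation.

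Next I would carry out the error estimate: plugging the ansatz into \eqref{DS}, the principal error comes from (i) the potential $W_2(x) - \omega_2$ felt by the bubble, which on the scale of the bubble contributes a term of order $\rho^2 + \e^2$ through the Taylor expansion $W_2(P_2 + \e y) - a_{21}\Upsilon(P_2+\e y) = \omega_2 + \tfrac12 \partial_{11}^2\omega_2(0)\rho^2 + \ldots$; (ii) the interaction $a_{21}u_1 u_2$ with the perturbation $\phi_1$ of $\Upsilon$; and (iii) the interaction between the two bumps $\Uae(\cdot - P_2/\e)$ and $\Uae(\cdot + P_2/\e)$, which because the bubbles decay exponentially is of order $e^{-c\rho/\e}$ — exponentially small unless $\rho \sim \e|\ln\e|$, which is exactly why that scale is forced. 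After establishing that the linearized operator around the ansatz is invertible on the orthogonal complement with norm bounded uniformly in $\e$ and $\rho$ (this uses the non-degeneracy of $\Upsilon$ in $H^1_e$ from $\bf{(V_1)}$, assumption $\bf{(V_2)}$ for the first equation, and the standard non-degeneracy of the ground state $\Ua$ of \eqref{limi}), a contraction-mapping argument produces, for each admissible $\rho$, a unique small solution $(\phi_1(\rho), \phi_2(\rho))$ of the projected problem, depending smoothly on $\rho$.

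The final step is the reduced equation: one shows the full system is solved iff the reduced function $\rho \mapsto c(\rho)$, obtained by testing the second equation against the kernel element, vanishes. The expansion of the reduced energy (or directly of $c(\rho)$) has the form
$$c(\rho) = A\,\partial_{11}^2\omega_2(0)\,\rho^2 - B\,e^{-2\rho/\e} + (\text{higher order}),$$
with $A, B > 0$ depending on $\int|\nabla\Ua|^2$, $\int\Ua^2$ and the exponential decay rate $\sqrt{\omega_2}$; the two competing terms balance precisely when $\rho \sim \mathfrak a\,\e|\ln\e|$. The sign condition $\partial_{11}^2\omega_2(0) < 0$ is what makes the balance possible — it turns the reduced problem into finding a zero (in fact a maximum point of the reduced energy) in the interior of the admissible interval, which exists by a degree/continuity argument. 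The main obstacle I expect is the linear theory: getting the uniform invertibility of the coupled linearized operator, because the coupling term $a_{21}u_1u_2$ links a non-concentrating equation to two singularly perturbed ones living on different scales, so one must carefully separate scales (a blow-up argument near each peak plus a separate argument away from the peaks for the first component) and show the off-diagonal coupling is a genuinely lower-order perturbation — here the smallness of $\Upsilon$ near the peaks and the exponential localization of the bubbles away from the support of $\phi_1$ are essential.
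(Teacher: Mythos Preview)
Your outline follows the same Lyapunov--Schmidt scheme as the paper (and \cite{PPVV}): ansatz with two symmetric bubbles in the second component, uniform invertibility of the linearized operator on the orthogonal of the translation kernel, contraction for the remainder, and a one-dimensional reduced equation balancing the potential expansion against the exponential bump--bump interaction. So the strategy is right.

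There is, however, one genuine omission that would cost you the error estimate as stated. The paper does \emph{not} take the bare ansatz $u_1=\Upsilon+\Phi_1$, $u_2=\Uae+\Phi_2$; it inserts explicit correction terms $\varphi_1,\varphi_2$ first (Lemmas~\ref{hatvarphi} and~\ref{esistphii}). The point is that the raw coupling $a_{12}\Upsilon\,\Uae(x/\e)$ in the first equation is of size $\e^{N/2}$ in $L^2$, which is \emph{larger} than the scale $\e^2|\ln\e|^2$ at which the reduced equation lives. Solving for $\varphi_1$ (and then $\varphi_2$, which cancels the feedback $a_{21}\varphi_1(0)\Uae$ in the second equation) strips this term out, so that the genuine remainder $(\Phi_1,\Phi_2)$ satisfies $\|(\Phi_1,\Phi_2)\|_X\lesssim\e^2|\ln\e|^2$. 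Without this step your contraction would close only at size $\e^{N/2}$, and you would then need a separate argument to show that the resulting $\Phi_1$-contribution, when projected onto $\mathcal Z_\e$, is still lower order --- doable, but not automatic, and it is exactly what the corrections are designed to avoid.

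Two smaller inaccuracies in your reduced expansion: the interaction term carries the decay rate $\sqrt{\omega_2}$, so the exponential is $e^{-2\sqrt{\omega_2}\rho/\e}(\rho/\e)^{-(N-1)/2}$ rather than $e^{-2\rho/\e}$; and testing the potential error against the kernel element $\mathcal Z_\e$ (a derivative) produces a term $\sim\partial_{11}^2\omega_2(0)\,\e\rho$, linear in $\rho$, not $\rho^2$. Both affect the constant $\mathfrak a=1/\sqrt{\omega_2}$ in the statement but not the mechanism.
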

\noindent The construction in Theorem \ref{thm2} is as in Theorem \ref{thm1}. However, since only two densities appear, we do not have the terms with $\beta$.\\ 
In the last Section we analyze the case of Gross-Pitaevskii type interaction and we use the configuration of points for the Lotka-Volterra system in order to generalize the result in \cite{PPVV} where only two equations are considered. \\ Here the form of the interaction permit to find a solution of the same form as in Theorem \ref{thm1} but without assuming the smallness of the interactions (see Theorem \ref{thm2}).\\ Indeed, we have the following result.
\begin{theorem}\label{thm3}
Let $N=2, 3$ and suppose that $\bf{(V_1)}$, $\bf{(V_2)}$ and $\bf{(W_i)}$ hold. Let $$\omega_i(x)=W_i(x)-a_{i1}\Upsilon (x)$$ and set $\omega_i:=\omega_i(0)\equiv \omega_0>0$. Assume that
 $$\partial_{11}^2\omega_2(0)<0\, \mbox{and}\,\partial_{22}^2\omega_3(0)<0.$$
There there exists $\e_0>0$ such that for any $\e\in(0, \e_0)$  the system 
$$ \left\{
\begin{aligned}
-  \Delta u_1 +V (x)u_1&=\mu_1 u_1^3+ u_1 \sum\limits_{j=2}^k \beta_{1j} u_j^2 &
 \hbox{in}\ \mathbb R^N,
 \\
-\e^2 \Delta u_j +W_j(x)u_j&=\mu_j u_j^3+u_j  \left(\beta_{j1} u_1^2+ \sum\limits_{i\not=j} \beta_{ji} u_i^2\right) & \hbox{in}\ \mathbb R^N,\ j=2,3.
\end{aligned}\right.
$$
has a positive solution $(u_{1, \e}, u_{2, \e}, u_{3, \e})$ which even with respect to each variable and having the following asymptotic profile as $\e\to 0$ $$u_{1, \e}(x)\sim \Upsilon (x), \quad u_{i, \e}(x)\sim \Um\left(\frac{x-P_i}{\e}\right)+\Um\left(\frac{x+P_i}{\e}\right),\,\, i=1, 2$$ where $\Upsilon $ solves \eqref{eq:V1} and $\Um$ is the positive radial solution of 
\begin{equation}\label{limi}
-\Delta \Um+\omega_0 \Um=\mu_i (\Um)^3\, \quad\mbox{in}\,\, \mathbb R^N.
\end{equation}
The peaks $P_i$ and $-P_i$ collapse to the origin as $$\pm P_2=\rho_{2, \e}(\pm 1, 0, 0)\, \mbox{with}\,\, \frac{\rho_{2, \e}}{\e|\ln\e|}\to \mathfrak a \, \mbox{as}\, \e\to 0$$ and $$\pm P_3=\rho_{3, \e}(0, \pm 1, 0)\, \mbox{with}\,\, \frac{\rho_{3, \e}}{\e|\ln\e|}\to \mathfrak b \, \mbox{as}\, \e\to 0$$ where $\mathfrak a$ and $\mathfrak b$ suitably depend on $\omega_0$.
\end{theorem}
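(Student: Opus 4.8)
The plan is to prove Theorem~\ref{thm3} by a finite--dimensional Lyapunov--Schmidt reduction around a two--scale ansatz, following the same scheme as Theorems~\ref{thm1} and~\ref{thm2}, the new feature being that the quadratic (Gross--Pitaevskii) couplings make the whole construction uniform in the interaction strength. First I would rescale the second and third equations by $y=x/\e$ and work in $H^1_e(\R^N)\times H^1_e(\R^N)\times H^1_e(\R^N)$ (with a weighted norm adapted to the concentration if needed). For $P_2=\rho_2 e_1$, $P_3=\rho_3 e_2$, with $\rho_2,\rho_3$ ranging in a window of size $\e|\ln\e|$ (so automatically $\rho_{i}\to0$ while $\rho_i/\e\to\infty$), define the approximate solution
\[
\mathbf W_\rho=\left(\Upsilon,\ \Ua\!\left(\tfrac{\cdot-P_2}{\e}\right)+\Ua\!\left(\tfrac{\cdot+P_2}{\e}\right),\ \Ub\!\left(\tfrac{\cdot-P_3}{\e}\right)+\Ub\!\left(\tfrac{\cdot+P_3}{\e}\right)\right),
\]
where $\Upsilon$ solves \eqref{eq:V1} and $\Ua,\Ub$ are the positive radial ground states of $-\Delta u+\omega_0 u=\mu_i u^3$. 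Since the $W_i$ are even and $\Upsilon$ is radial, $\omega_i$ is even with $\nabla\omega_i(0)=0$, hence $\omega_2(P_2)=\omega_0+\tfrac12\partial_{11}^2\omega_2(0)\rho_2^2+\bigo{\rho_2^4}$ and $\omega_3(P_3)=\omega_0+\tfrac12\partial_{22}^2\omega_3(0)\rho_3^2+\bigo{\rho_3^4}$, which is the origin of the curvature hypotheses. Because the system (after the usual symmetrization of the coupling matrix; otherwise one reduces the nonlinear map itself) is the Euler--Lagrange system of an energy $J_\e$, I would look for an exact solution $\mathbf u=\mathbf W_\rho+\boldsymbol\phi$ with $\boldsymbol\phi$ orthogonal to the approximate kernel $\mathrm{span}\{Z_2,Z_3\}$, $Z_i:=\partial_{\rho_i}\mathbf W_\rho$: within $H^1_e$ the operator $-\Delta+V-3\mu_1\Upsilon^2$ is injective by $\mathbf{(V_1)}$, and the approximate kernel of the $i$--th rescaled linearization reduces to the single direction $Z_i$, the remaining translations of $\Ua,\Ub$ being odd in some variable and excluded by $H^1_e$.

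The second step is the infinite--dimensional equation: solve $\Pi^\perp S_\e(\mathbf W_\rho+\boldsymbol\phi)=0$ for $\boldsymbol\phi=\boldsymbol\phi_\rho$ by a contraction argument, where $S_\e$ denotes the (rescaled) system and $\Pi^\perp$ the projection off $\mathrm{span}\{Z_2,Z_3\}$. This rests on (i) uniform invertibility of the projected linearization at $\mathbf W_\rho$, which follows from non--degeneracy of $\Upsilon$ in $H^1_e$, non--degeneracy of $\Ua,\Ub$ modulo translations, and the mutual separation of the four bumps, the coupling and overlap blocks being lower order and absorbed by a Neumann series; and (ii) a bound on the error $S_\e(\mathbf W_\rho)$, whose contributions are the non--constancy of $W_i$ (an $\bigo{\e}$ first--order plus $\bigo{\rho_i^2}$ second--order term in the rescaled variable), the $\bigo{\e^{N/2}}$ localized feedback of the concentrating densities onto the first equation, the self--overlap of the $\pm P_i$ pair, of order $e^{-2\sqrt{\omega_0}\rho_i/\e}$, and the cross--overlap of the $u_2$-- and $u_3$--families, of order $e^{-2\sqrt{\omega_0}\sqrt{\rho_2^2+\rho_3^2}/\e}$. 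Here lies the point that removes any smallness assumption on $\beta$: every Gross--Pitaevskii coupling is \emph{quadratic} in the concentrating densities, and since $|P_2\mp P_3|=\sqrt{\rho_2^2+\rho_3^2}>\max(\rho_2,\rho_3)$, the cross--overlap is strictly of lower order than the self--overlap; together with the variational structure, this makes the reduction uniform in $\beta$. One then obtains $\norm{\boldsymbol\phi_\rho}$ controlled by the error and, differentiating the fixed--point identity, $C^1$ dependence of $\rho\mapsto\boldsymbol\phi_\rho$.

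The third step is the reduced problem: expanding the reduced energy $\mathcal J_\e(\rho):=J_\e(\mathbf W_\rho+\boldsymbol\phi_\rho)$ I expect
\[
\mathcal J_\e(\rho)=c_0+\e^N\!\left[\tfrac12 c_1\!\left(\partial_{11}^2\omega_2(0)\,\rho_2^2+\partial_{22}^2\omega_3(0)\,\rho_3^2\right)-c_2\,e^{-2\sqrt{\omega_0}\rho_2/\e}-c_3\,e^{-2\sqrt{\omega_0}\rho_3/\e}\right]+\text{(lower order)},
\]
with $c_1,c_2,c_3>0$ (the interaction energy of two equal focusing bumps is negative). The cross--overlap enters $\nabla_\rho\mathcal J_\e$ only at order $e^{-2\sqrt{\omega_0}\sqrt{\rho_2^2+\rho_3^2}/\e}$, hence negligibly, so the critical--point system decouples at leading order into the two scalar equations $c_1\partial_{11}^2\omega_2(0)\rho_2+\tfrac{2\sqrt{\omega_0}}{\e}c_2\,e^{-2\sqrt{\omega_0}\rho_2/\e}=0$ and $c_1\partial_{22}^2\omega_3(0)\rho_3+\tfrac{2\sqrt{\omega_0}}{\e}c_3\,e^{-2\sqrt{\omega_0}\rho_3/\e}=0$. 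When $\partial_{11}^2\omega_2(0)<0$ and $\partial_{22}^2\omega_3(0)<0$ each admits a solution $\rho_{i,\e}$ with $\rho_{i,\e}/(\e|\ln\e|)$ converging to the limit prescribed in the statement (depending on $\omega_0$), and it is non--degenerate because the $2\times 2$ Hessian is, at leading order, diagonal with entries of fixed sign and of size $\e^N|\ln\e|$; a degree or continuation argument then produces an exact critical point of $\mathcal J_\e$, i.e. (via the previous step) a solution of the full system. Evenness is preserved because the construction stays inside $H^1_e$ with nonnegative profiles, and positivity follows a posteriori from the maximum principle once $\norm{\boldsymbol\phi_\rho}$ is small.

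I expect the main obstacle to be exactly this sharp interaction bookkeeping: one must compute all bump--bump and bump--$\Upsilon$ overlaps precisely enough to (a) prove that the reduced equations genuinely decouple and (b) fix the sign and size of the self--overlap term, since the very existence of the collapsing critical point — and hence the necessity of the conditions $\partial_{11}^2\omega_2(0)<0$ and $\partial_{22}^2\omega_3(0)<0$ — rests entirely on the competition between the $\rho_i^2$ potential term and the exponential self--interaction.
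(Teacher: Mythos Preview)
Your proposal is correct and lands on the decisive observation of the paper: because the Gross--Pitaevskii coupling is \emph{quadratic} in the concentrating components, the cross--interaction between the $u_2$-- and $u_3$--bumps is of order $e^{-2\sqrt{\omega_0}\sqrt{\rho_2^2+\rho_3^2}/\e}=o\bigl(e^{-2\sqrt{\omega_0}\rho_i/\e}\bigr)$ (since $\sqrt{\rho_2^2+\rho_3^2}>\rho_i$), so the reduced equations decouple and no smallness of $\beta$ is needed. This is exactly what the paper isolates in Lemma~\ref{proiezioniGP} and Proposition~\ref{nonlinGP}.

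There are, however, two differences in route worth flagging. First, the paper does \emph{not} pass through a reduced energy: since nothing forces $\beta_{ij}=\beta_{ji}$, the system need not be variational, and the paper projects the nonlinear map directly onto $\mathcal Z_\e,\mathcal Y_\e$ and solves $\mathfrak c_2=\mathfrak c_3=0$ by the intermediate--value argument already used for Theorem~\ref{thm1}. You mention this alternative parenthetically, but your main line (critical points of $\mathcal J_\e$, Hessian non--degeneracy) would only go through under the symmetry assumption; in the general case you must stay with the direct reduction. Second, the paper's ansatz carries the extra correction layers $\varphi_1,\varphi_2,\varphi_3$ (cf.\ \eqref{eq:V}, \eqref{eq:Psi}) which absorb the leading $\bigo{\e^{N/2}}$ feedback of the bumps onto $u_1$ and the resulting $\bigo{\e^{N/2}}$ back--reaction on $u_2,u_3$; this is what yields the clean bound $\|(\Phi_1,\Phi_2,\Phi_3)\|_X\lesssim\e^2|\ln\e|^2$. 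Without these corrections your contraction still closes, but with a larger $\boldsymbol\phi$, and you would then need to check more carefully that this larger remainder, when projected onto $Z_i$, stays below the leading $\e\rho_i$ and $e^{-2\sqrt{\omega_0}\rho_i/\e}$ terms.
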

We remark that in Theorem \ref{thm3} we consider only the case in which all the $\omega_i$ coincide with some $\omega_0$ in order to make a comparison with Lotka-Volterra type interactions.\\
At the end we remark a possible configuration for considering $k\geq 3$ equations. However the computations are not so easy, so we do not consider the fully general case.\\
We finally remark that in Lotka-Volterra type system the interaction plays a role in order to obtain a solution, while in Gross-Pitaevskii type system the interaction does not play any role even with more equations.\\

The paper is organized in the following way: in Section \ref{setting} we introduce the good spaces and the ansatz of the solution we look for and its properties. In Section \ref{ausiliaria} we find the error term and in Section \ref{teorema} we prove Theorem \ref{thm1}. In Section \ref{generalizzazione} we consider other kind of systems. Finally, in Section \ref{appendix} we recall some useful lemmas.

\section{Setting of the problem}\label{setting}
We let$$H^2_V:=\left\{u\in H^2(\mathbb R^N)\,:\, \int_{\mathbb R^N} V(x) u^2\, dx <+\infty\right\},$$
$$H^2_{W_{i, \e}}:=\left\{u\in H^2(\mathbb R^N)\, :\, \int_{\mathbb R^N} W_i(\e x)u^2\, dx <+\infty\right\}$$
equipped with the norms
$$\|u\|_V:=\left(\int_{\mathbb R^N}\sum_{|\alpha|=2}|D^\alpha u|^2+\int_{\mathbb R^N}|\nabla u|^2+\int_{\mathbb R^N}V(x) u^2\right)^{\frac 12}$$ and 
$$\|u\|_{i, \e}:=\left(\int_{\mathbb R^N}\sum_{|\alpha|=2}|D^\alpha u|^2+\int_{\mathbb R^N}|\nabla u|^2+\int_{\mathbb R^N}W_i(\e x)u^2\right)^{\frac 12}.$$
Performing a change of variables, we are lead to look for a solution $(u_1, u_2, u_3)\in X $ of
\begin{equation}\label{DS1}
\left\{\begin{aligned}-\Delta u_1+V(x)u_1&=\mu_1 u_1^3+a_{12}u_1u_2\left(\frac x \e\right)+a_{13}u_1u_3\left(\frac x \e\right),\quad&\mbox{in}\,\, \mathbb R^N\\
-\Delta u_2+W_2(\e x)u_2 &=\mu_2 u_2^3+a_{21}u_2u_1(\e x)+\beta a_{23}u_2u_3(x)\quad&\mbox{in}\,\, \mathbb R^N\\
-\Delta u_3+W_3(\e x)u_3 &=\mu_3 u_3^3+a_{31}u_3u_1(\e x)+\beta a_{32}u_2u_3(x)\quad&\mbox{in}\,\, \mathbb R^N\\
\end{aligned}\right.
\end{equation}
where \begin{equation}\label{X}X:=\left\{(u_1, u_2, u_3)\in H^2_V\times H^2_{W_{1, \e}}\times H^2_{W_{3, \e}}\, :\, u_i\,\, \hbox{are even functions}\right\}.\end{equation}
\subsection{The ansatz and the correction terms}
Since $\bf{(V_{1})}$ and $\bf{(V_{2})}$ hold, then we can consider $\Upsilon$ the solution of 
\begin{equation}\label{lim1}
-\Delta\Upsilon+V(x)\Upsilon=\mu_1\Upsilon^3\, \quad\mbox{in}\,\, \mathbb R^N
\end{equation}
and $\Um$ be the solution of 
\begin{equation}\label{limi}
-\Delta \Um+\omega_i \Um=\mu_i (\Um)^3\, \quad\mbox{in}\,\, \mathbb R^N
\end{equation}
where $$\omega_i:=W_i(0)-a_{i1}\Upsilon(0).$$ 
 We also let $$\omega_i(\e x)=W_i(\e x)-a_{i1}\Upsilon (\e x)\,\, i=2, 3.$$
Since $a_{i1}<0$ then $\omega_i>0$.\\
We look for a solution $(u_1, u_2, u_3 )$ of \eqref{DS1} of the form
$$u_1(x)=\Upsilon(x)+\varphi_1(x)+\Phi_1(x),\quad u_i(x)= \Ui(x)+\varphi_i(x)+\Phi_i(x), i=2, 3$$
where
\begin{equation}\label{hatU}
 \Ui(x)=\Um\left(x-\frac{P_i}{\e}\right)+\Um\left(x+\frac{P_i}{\e}\right):=\Umm+\Ump
\end{equation}
and the concentration points satisfy 
\begin{equation}\label{P1}
P_2=\rho_{2, \e} P_0=\rho_{2, \e}(1, 0, \ldots, 0),\,\, \frac{\rho_{2, \e}}{\e}\to+\infty\, \mbox{as}\,\, \e\to0.
\end{equation}
\begin{equation}\label{P2}
P_3=\rho_{3, \e} \bar P_0=\rho_{3, \e}(0, 1, \ldots, 0),\,\, \frac{\rho_{3, \e}}{\e}\to+\infty\, \mbox{as}\,\, \e\to0.
\end{equation}
Moreover $$\frac{|P_2\pm P_3|}{\e}=\frac{\sqrt{\rho_{2, \e}^2+\rho_{3, \e}^2}}{\e}\to +\infty\, \mbox{as}\,\, \e\to0.
$$
Finally $
\rho_{i, \e} \in \mathcal D_{i, \e}^j
$
where $\mathcal D_{i, \e}^j$ are defined in this way: let $b>0$
\begin{enumerate}
\item if $\omega_2\leq\frac{\omega_3}{4}$ then $$\mathcal D_{2, \e}^1:=\left[\left(\frac{1}{\sqrt{\omega_2}}-\delta\right)\e|\ln\e|, \left(\frac{1}{\sqrt{\omega_2}}+\delta\right)\e|\ln\e|\right]$$ and $$\mathcal D_{3, \e}^1:=\left[\left(\frac{\sqrt {(1-b)(3-b)}}{\sqrt{\omega_2}}-\delta\right)\e|\ln\e|, \left(\frac{\sqrt {(1-b)(3-b)}}{\sqrt{\omega_2}}+\delta\right)\e|\ln\e|\right]$$ with $b<1$ and $\delta>0$ small.\\
\item if $\omega_3\leq\frac{\omega_2}{4}$ then $$\mathcal D_{2, \e}^2:=\left[\left(\frac{\sqrt {(1-b)(3-b)}}{\sqrt{\omega_3}}-\delta\right)\e|\ln\e|, \left(\frac{\sqrt {(1-b)(3-b)}}{\sqrt{\omega_3}}+\delta\right)\e|\ln\e|\right]$$ and $$\mathcal D_{3, \e}^2:=\left[\left(\frac{1}{\sqrt{\omega_3}}-\delta\right)\e|\ln\e|, \left(\frac{1}{\sqrt{\omega_3}}+\delta\right)\e|\ln\e|\right]$$ with $b<1$ and $\delta>0$ small.\\
\item if $\frac{\omega_3}{4}<\omega_2<4 \omega_3$ and $\omega_2\neq \omega_3$ then $$\mathcal D_{2, \e}^3:=\left[\left(\frac{1}{\sqrt{\omega_2}}-\delta,\frac{1}{\sqrt{\omega_2}} +\delta\right)\e|\ln\e|\right]$$ and $$\mathcal D_{3, \e}^3:=\left[\left(\frac{1}{\sqrt{\omega_3}}-\delta, \frac{1}{\sqrt{\omega_3}}+\delta\right)\e|\ln\e|\right]$$ with $\delta>0$ small.\\
\item if $\omega_2=\omega_3=\omega_0$ we let
$$\mathcal D_{2, \e}^4:=\left[\left(\frac{2-b}{\sqrt{2\omega_0}}-\delta,\frac{2-b}{\sqrt{2\omega_0}} +\delta\right)\e|\ln\e|\right]$$ and $$\mathcal D_{3, \e}^4:=\left[\left(\frac{2-b}{\sqrt{2\omega_0}}-\delta, \frac{2-b}{\sqrt{2\omega_0}}+\delta\right)\e|\ln\e|\right]$$ with $b<2$ and $\delta>0$ small.\\
\end{enumerate}
We also define $$\mathcal Z_{\e}:=\e \frac{\partial \Uae}{\partial \rho_{2, \e}}=\partial_1 \Ua\left(x+\frac{P_2}{\e}\right)-\partial_1 \Ua\left(x-\frac{P_2}{\e}\right)$$ and $$\mathcal Y_{\e}:=\e \frac{\partial \Ube}{\partial \rho_{2, \e}}=\partial_2 \Ub\left(x+\frac{P_3}{\e}\right)-\partial_2 \Ub\left(x-\frac{P_3}{\e}\right).$$ By simple computations we find that $\mathcal Z_\e$ solves the linear problem
\beq\label{Ze}
-\Delta \mathcal Z_\e +\omega_2\mathcal Z_\e =3\mu_2 \left((\Uap)^2\partial_1 \Uap-(\Uam)^2\partial_1 \Uam\right)\eeq
and 
$\mathcal Y_\e$ solves the linear problem
\beq\label{Ye}
-\Delta \mathcal Y_\e +\omega_3\mathcal Y_\e =3\mu_3 \left((\Ubp)^2\partial_2 \Ubp-(\Ubm)^2\partial_2 \Ubm\right).\eeq
We remark that $\Ui$, $\mathcal Z_\e$ and $\mathcal Y_\e$ are even for $i=2, 3$.\\\\

\noindent The functions $\Phi_i$ are remainder terms that satisfies suitable orthogonality conditions. Indeed, if we let $$\mathcal K^\bot:=\left\{(\psi_1, \psi_2, \psi_3)\in X\, :\, \int_{\mathbb R^N}\psi_2 \mathcal Z_\e=0, \,\, \int_{\mathbb R^N}\psi_3\mathcal Y_\e=0\right\}$$ then we look for $(\Phi_1, \Phi_2, \Phi_3)\in\mathcal K^\bot$.\\
\begin{remark}\label{rema1}
It is useful to recall the classical results concerning the case of  
 constant potential, i.e.
 \begin{equation}\label{Uc}
-\Delta U+\lambda U=\nu U^{3}\ \hbox{in}\ \R^N,\ U\in H^1(\R^N).
\end{equation}
It is well known that \eqref{Uc} has  an unique positive solution which is radially symmetric   and also that
the  set of solution of the corresponding linearized 
equation
$$-\Delta z+\lambda z=3\nu U^{2}z\ \hbox{in}\ \R^N,\ z\in H^1(\R^N)$$
 is spanned by the $N$ partial derivatives  $ {\partial U\over \partial x_i}$ which are  odd in each variable.
In addition,   $U$    is radially decreasing and it satisfies the following exponential decay (see \cite{beli1,beli2, kwong})
\begin{equation}\label{eq:Udecay}
\lim_{|x|\to\infty}U(x)e^{\sqrt{\lambda}|x|}|x|^{\frac{N-1}2}=C_{0}>0,
\qquad
\lim_{|x|\to\infty}\frac{U'(x)}{U(x)}=-1.
\end{equation}\\
 \end{remark}
\noindent In what follows we need to understand the correction terms and their properties in the following lemmas.

\begin{lemma}\label{hatvarphi}
For $i=2, 3$, there exists a unique even $\eta_i\in H^2_V(\mathbb R^N)$ solution of the equation
\begin{equation}\label{equazionehatvarphi}
-\Delta\eta_i+\left(V(x)-3\mu_1\Upsilon^2(x)\right)\eta_i=\Upsilon(x) \Ui\left(\frac x \e\right).
\end{equation}
Moreover $\|\eta_i\|_{W^{2, m}(\mathbb R^N)}\lesssim \e^{\frac N m}$ and $\|\eta_i\|_{C^{1, 1-\frac N m}(\mathbb R^N)}\lesssim \e^{\frac N m}$ for any $m\geq 2$.
\end{lemma}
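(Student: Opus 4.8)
The plan is to solve \eqref{equazionehatvarphi} by inverting, once and for all, the linear operator $\mathcal L:=-\Delta+V(x)-3\mu_1\Upsilon^2(x)$ on the subspace of functions even with respect to each variable, and then to read off the two quantitative bounds from $\mathbf{(V_2)}$ and from the scaling of the datum. \emph{Step 1: $\mathcal L$ is an isomorphism.} For $m\in[2,\infty)$ let $W^{2,m}_e(\R^N)$ and $L^m_e(\R^N)$ be the closed subspaces of functions even in each variable. Since $V=V(|x|)$, reflection symmetry of the equation $-\Delta u+Vu=f$ together with the uniqueness in $\mathbf{(V_2)}$ shows that $-\Delta+V$ restricts to an isomorphism of $W^{2,m}_e(\R^N)$ onto $L^m_e(\R^N)$; hence $\mathcal L=(-\Delta+V)\bigl(I-\mathcal K\bigr)$ with $\mathcal K:=(-\Delta+V)^{-1}(3\mu_1\Upsilon^2\,\cdot\,)$. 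As $\Upsilon\in H^2(\R^N)\cap C^3(\R^N)$ vanishes at infinity, $u\mapsto 3\mu_1\Upsilon^2u$ is compact from $W^{2,m}_e(\R^N)$ to $L^m_e(\R^N)$: on each ball it factors through the compact embedding $W^{2,m}(B_R)\hookrightarrow L^m(B_R)$, while on $\R^N\setminus B_R$ its norm is at most $3\mu_1\|\Upsilon^2\|_{L^\infty(\R^N\setminus B_R)}\to0$. Thus $\mathcal K$ is compact and $\mathcal L$ is Fredholm of index $0$, so it suffices to check injectivity. If $\eta\in W^{2,m}_e(\R^N)$ and $\mathcal L\eta=0$, then (since $N\le3$ gives $W^{2,m}\hookrightarrow L^\infty$ for $m\ge2$, and $\Upsilon\in H^2\cap L^\infty$) the term $3\mu_1\Upsilon^2\eta$ belongs to $L^2(\R^N)$, so $-\Delta\eta+V\eta\in L^2(\R^N)$ and $\mathbf{(V_2)}$ with $m=2$ forces $\eta\in H^2(\R^N)$; being even, $\eta\in H^1_e(\R^N)$, and then the non-degeneracy in $\mathbf{(V_1)}$ gives $\eta\equiv0$. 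Hence $\mathcal L:W^{2,m}_e(\R^N)\to L^m_e(\R^N)$ is an isomorphism for every $m\in[2,\infty)$.

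\emph{Step 2: existence, uniqueness and the bounds.} The right-hand side $g_i(x):=\Upsilon(x)\,\Ui\!\left(\tfrac x\e\right)=\Upsilon(x)\bigl[\Um\!\left(\tfrac{x-P_i}{\e}\right)+\Um\!\left(\tfrac{x+P_i}{\e}\right)\bigr]$ is smooth and even in each variable: $\Upsilon$ is radial, while the two copies of the radial profile $\Um$ are centred at the opposite points $\pm P_i$ of a coordinate axis, so a reflection in that axis only interchanges them. By Remark \ref{rema1} the ground state $\Um$ decays exponentially, hence $\Um\in L^m(\R^N)$ and the change of variables $y=(x\mp P_i)/\e$ yields $\|g_i\|_{L^m(\R^N)}\le 2\|\Upsilon\|_{L^\infty}\,\|\Um\|_{L^m(\R^N)}\,\e^{N/m}\lesssim\e^{N/m}$. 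We set $\eta_i:=\mathcal L^{-1}g_i\in W^{2,m}_e(\R^N)$; this is the unique even solution of \eqref{equazionehatvarphi}, uniqueness being exactly the injectivity from Step 1, and $\|\eta_i\|_{W^{2,m}(\R^N)}\le C_m\|g_i\|_{L^m(\R^N)}\lesssim\e^{N/m}$. To get $\eta_i\in H^2_V(\R^N)$ take $m=2$, so $\eta_i\in H^2(\R^N)$, and rewrite the equation as $V\eta_i=g_i+\Delta\eta_i+3\mu_1\Upsilon^2\eta_i\in L^2(\R^N)$; then $\int_{\R^N}V\eta_i^2=\int_{\R^N}(V\eta_i)\,\eta_i<\infty$ by Cauchy--Schwarz. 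Finally, for $m>N$ Morrey's embedding $W^{2,m}(\R^N)\hookrightarrow C^{1,1-N/m}(\R^N)$ gives $\|\eta_i\|_{C^{1,1-N/m}(\R^N)}\lesssim\|\eta_i\|_{W^{2,m}(\R^N)}\lesssim\e^{N/m}$.

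The only genuinely delicate point is Step 1: upgrading the qualitative non-degeneracy of $\mathbf{(V_1)}$ (assumed only on $H^1_e(\R^N)$) and the scalar solvability of $\mathbf{(V_2)}$ to an isomorphism of $\mathcal L$ on the even $W^{2,m}$-scale, and in particular verifying that a putative kernel element of $\mathcal L$ automatically lies in $H^1_e(\R^N)$, which is where the restriction $N\le 3$ and the decay of $\Upsilon$ enter. Once this functional framework is set up, the dependence on $\e$ is completely elementary and comes only from the $L^m$-norm of the concentrated datum $g_i$, which scales like $\e^{N/m}$.
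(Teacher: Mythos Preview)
Your proof is correct and supplies the details that the paper omits by deferring to Lemma~2.1 of \cite{PPVV}; the approach there is the same as yours---Fredholm alternative for $\mathcal L=-\Delta+V-3\mu_1\Upsilon^2$ on the even subspace, with injectivity coming from the non-degeneracy in $\mathbf{(V_1)}$ and compactness of the $\Upsilon^2$-multiplication, followed by the elementary scaling $\|g_i\|_{L^m}\lesssim\e^{N/m}$ of the concentrated right-hand side. One small remark: the H\"older estimate $\|\eta_i\|_{C^{1,1-N/m}}\lesssim\e^{N/m}$ is only meaningful for $m>N$, as you correctly note; the paper's ``for any $m\ge2$'' is a slight imprecision that applies literally only to the $W^{2,m}$ bound.
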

\begin{proof}
We can reason as in Lemma 2.1 of \cite{PPVV} to get the conclusion.
\end{proof}
 We define the even function $$\varphi_1(x):=a_{12}\eta_2(x)+a_{13}\eta_3(x).$$  \begin{remark}\label{stimaphi1}
Since $\nabla\eta_i(0)=0$ we deduce that for any $m\geq 2$
$$
|\eta_i(y)-\eta_i(0)|\lesssim |y|^{2-\frac N m}\|\eta_i\|_{C^{1, 1-\frac N m}(\mathbb R^N)}\lesssim \e^{\frac N m}|y|^{2-\frac N m}.
$$
Hence we also obtain that 
\beq\label{stima1}
|\varphi_1(y)-\varphi_1(0)|\lesssim |y|^{2-\frac N m}\left(\|\eta_2\|_{C^{1, 1-\frac N m}}+\|\eta_3\|_{C^{1, 1-\frac N m}(\mathbb R^N)}\right)\lesssim \e^{\frac N m}|y|^{2-\frac N m}\eeq and 
$$\|\varphi_1\|_{W^{2, m}(\mathbb R^N)}\lesssim \e^{\frac N m},\quad \|\varphi_1\|_{C^{1, 1-\frac N m}(\mathbb R^N)}\lesssim \e^{\frac N m}$$ for any $m\geq 2$.
Moreover $\varphi_1$ solves
$$-\Delta\varphi_1+\left(V(x)-3\mu_1\Upsilon^2(x)\right)\varphi_1=a_{12}\Upsilon(x) \Uae\left(\frac x \e\right)+a_{13}\Upsilon(x) \Ube\left(\frac x \e\right).$$

Moreover, since $\nabla\Upsilon(0)=0$ we also have 
\beq\label{stima2}
|\Upsilon(y)-\Upsilon(0)|\lesssim |y|^{2}\|\Upsilon\|_{C^{2}(\mathbb R^N)}\lesssim |y|^{2}.
\eeq
\end{remark}

 \begin{lemma} \label{esistphii}
 Let $i=2, 3$. There exists a unique $\varphi_i\in H^2(\mathbb R^N)$, solution of the equation
 \beq\label{eq2}
 -\Delta\varphi_i+\left(\omega_i-3\mu_i((\Umm)^2+(\Ump)^2)\right)\varphi_i=a_{i1} \Ui\varphi_1(0).\eeq
 Moreover , $\|\varphi_i\|_{H^2(\mathbb R^N)}\lesssim \e^{\frac N 2}$.  \end{lemma}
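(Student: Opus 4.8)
The plan is to solve \eqref{eq2} by a fixed-point / Fredholm argument in $H^2(\mathbb R^N)$, exploiting that the right-hand side $a_{i1}\Ui\varphi_1(0)$ is small (of order $\e^{N/2}$ in $L^2$) and that the "frozen" operator $-\Delta + \omega_i$ is invertible. First I would rewrite \eqref{eq2} as
\[
(-\Delta+\omega_i)\varphi_i = 3\mu_i\big((\Umm)^2+(\Ump)^2\big)\varphi_i + a_{i1}\Ui\,\varphi_1(0),
\]
and observe that, since $\omega_i>0$, the operator $-\Delta+\omega_i:H^2(\mathbb R^N)\to L^2(\mathbb R^N)$ is an isomorphism. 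Hence \eqref{eq2} is equivalent to the fixed-point equation $\varphi_i = \mathcal T_\e(\varphi_i)$, where $\mathcal T_\e(\psi):=(-\Delta+\omega_i)^{-1}\big[3\mu_i((\Umm)^2+(\Ump)^2)\psi + a_{i1}\Ui\varphi_1(0)\big]$.

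The key point is that the potential term $(\Umm)^2+(\Ump)^2$, although not small in $L^\infty$, is a sum of two bumps concentrated near $\pm P_i/\e$, which are far apart and escape to infinity in the rescaled picture. More precisely, I expect that the full linearized operator $L_i\psi:=-\Delta\psi+\big(\omega_i-3\mu_i((\Umm)^2+(\Ump)^2)\big)\psi$ is invertible on the subspace of even functions orthogonal to the kernel directions $\mathcal Z_\e$ (for $i=2$) resp.\ $\mathcal Y_\e$ (for $i=3$); but since we are not imposing orthogonality constraints here and instead asking for a genuine $H^2$ solution, the relevant fact is rather that $L_i$ restricted to \emph{even} functions is invertible uniformly in $\e$. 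Indeed, the kernel of the single-bump linearization $-\Delta + \omega_i - 3\mu_i\Um^2$ is spanned by the partial derivatives $\partial_k\Um$, which are \emph{odd} in $x_k$ (Remark~\ref{rema1}); a standard gluing/localization argument (splitting $\psi$ via a partition of unity subordinate to the two bumps and the far region, using that the bumps are separated by a distance $\to\infty$) shows that any even element of $\ker L_i$ must vanish, and moreover yields the uniform a priori estimate $\|\psi\|_{H^2}\lesssim \|L_i\psi\|_{L^2}$ on even functions. This gives a bounded inverse $L_i^{-1}$ with norm bounded independently of $\e$ (and, since $\beta$ does not enter \eqref{eq2}, independently of $\beta$).

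Granting this, existence and uniqueness of the even solution $\varphi_i$ follows immediately: $\varphi_i = L_i^{-1}\big(a_{i1}\Ui\varphi_1(0)\big)$. For the estimate, note from Remark~\ref{stimaphi1} that $\varphi_1(0)=a_{12}\eta_2(0)+a_{13}\eta_3(0) = \mathcal O(\e^{N/m})$ for every $m\ge 2$, in particular $|\varphi_1(0)|\lesssim \e^{N/2}$ (taking $m=2$); and $\|\Ui\|_{L^2(\mathbb R^N)}=\|\Um(\cdot-P_i/\e)+\Um(\cdot+P_i/\e)\|_{L^2}\le 2\|\Um\|_{L^2}$ is bounded because the bumps are (essentially) disjointly supported and $\Um\in H^1(\mathbb R^N)$ decays exponentially by \eqref{eq:Udecay}. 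Hence $\|a_{i1}\Ui\varphi_1(0)\|_{L^2}\lesssim \e^{N/2}$, and applying the uniform bound on $L_i^{-1}$ gives $\|\varphi_i\|_{H^2(\mathbb R^N)}\lesssim \e^{N/2}$, as claimed.

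The main obstacle is the uniform invertibility of $L_i$ on even functions: one must rule out the odd kernel modes $\partial_k\Um$ contributing and control the interaction between the two far-apart bumps. This is the standard but delicate Lyapunov--Schmidt step; I would carry it out by a contradiction argument — assuming $\psi_\e$ with $\|\psi_\e\|_{H^2}=1$ and $\|L_i\psi_\e\|_{L^2}\to 0$, translating by $\mp P_i/\e$ to recenter on each bump, extracting weak limits solving the limit linearized equation $-\Delta w+\omega_i w=3\mu_i\Um^2 w$, using non-degeneracy (the limit $w$ is a combination of odd derivatives) together with the evenness of $\psi_\e$ to force $w\equiv 0$, and finally recovering a contradiction from elliptic estimates on the far region where the potential term is exponentially small. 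I expect this can be quoted essentially verbatim from the analogous construction in \cite{PPVV}, as the proof of Lemma~\ref{hatvarphi} already does.
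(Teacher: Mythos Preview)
Your argument has a genuine gap at the invertibility step. You claim that $L_i=-\Delta+\omega_i-3\mu_i\big((\Umm)^2+(\Ump)^2\big)$ is uniformly invertible on \emph{even} functions because the single-bump kernel directions $\partial_k\Um$ are odd. But translation by $\pm P_i/\e$ destroys parity: for $i=2$ the combination $\mathcal Z_\e=\partial_1\Ua(\cdot+P_2/\e)-\partial_1\Ua(\cdot-P_2/\e)$ is even in \emph{every} variable (the paper states this explicitly just below \eqref{Ye}), and from \eqref{Ze} one computes $\|L_2\mathcal Z_\e\|_{L^2}=o(1)$ exponentially as $\e\to 0$. Thus $\mathcal Z_\e$ is an even approximate kernel element and the coercivity estimate $\|\psi\|_{H^2}\lesssim\|L_i\psi\|_{L^2}$ \emph{fails} on the even subspace. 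In your contradiction argument the step ``evenness of $\psi_\e$ forces the weak limit $w$ to vanish'' breaks down precisely here: after recentering by $-P_2/\e$ the translated sequence is no longer even in $x_1$, so nothing prevents $w=c\,\partial_1\Ua$ with $c\neq 0$.

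The route actually taken in \cite{PPVV} (to which the paper defers) works at the \emph{single-bump} level. One first solves the radial problem
\[
-\Delta\psi+\big(\omega_i-3\mu_i\Um^2\big)\psi=\Um
\]
in the class of radial $H^2$ functions; here the linearized operator \emph{is} uniformly invertible because its kernel $\mathrm{span}\{\partial_k\Um\}$ contains no radial element. This yields a unique radial $\psi$ with $\|\psi\|_{H^2}=O(1)$ and exponential decay (cf.\ Remark~\ref{decay}). One then sets
\[
\varphi_i(x)=a_{i1}\varphi_1(0)\Big[\psi\big(x+\tfrac{P_i}{\e}\big)+\psi\big(x-\tfrac{P_i}{\e}\big)\Big],
\]
which is exactly the formula used in Remark~\ref{decay}. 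The estimate $\|\varphi_i\|_{H^2}\lesssim |\varphi_1(0)|\,\|\psi\|_{H^2}\lesssim \e^{N/2}$ is then immediate from your own bound $|\varphi_1(0)|\lesssim\e^{N/2}$. So the fix is not a sharper inversion of the two-bump operator, but rather to avoid it entirely by working radially around a single bump.
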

 \begin{proof}
 The proof can be made as in Lemma 2.3 of \cite{PPVV}.\end{proof}
 \begin{remark}\label{decay}
 We need to understand the asymptotic behavior of $\varphi_i$. Hence we let $\psi$ the radial solution of $$-\Delta\psi+\left(\lambda-3\nu U^2\right)\psi= U$$ where $U$ solves $-\Delta U +\lambda U=\nu U^3$.\\
 Then, reasoning as in Lemma 2.3 of \cite{PPVV}, we have that there exists $R_0>0$ and $0<\gamma<\lambda^2$ such that 
 $$0<\psi(r)<e^{-\sqrt\gamma r}.$$
 Now, let $G(x, y)$ the Green function  associated to the operator $-\Delta+\lambda$ in $\mathbb R^N$. It is well known that 
 $$G(r):=(2\pi)^{-\frac N 2}\left(\frac{\sqrt\lambda}{r}\right)^{\frac{N-2}{2}}\mathfrak K_{\frac N 2-1}\left(\sqrt\lambda r\right)$$ where $\mathfrak K_{\frac N 2-1}$ are the modified Bessel functions. In particular, we have that
 $$G(x, y)=\mathfrak c_0 \frac{e^{-\sqrt{\lambda}|x-y|}}{|x-y|},\,\mbox{if}\, N=3\,\mbox{and}\,\, G(x, y)\lesssim \frac{e^{-\sqrt{\lambda}|x-y|}}{|x-y|^\frac 12},\,\mbox{if}\, N=2.$$
 Let $N=3$ (one can reason similarly for $N=2$). Therefore, we have that 
 
 $$\begin{aligned}
 \psi(x)&=\int_{\mathbb R^3}G(x, y)\left[3\nu U^2(y)\psi(y)+U(y)\right]\, dy\\
 &=\underbrace{\int_{|x-y|<\frac{|x|}{2}}G(x, y)\left[3\nu U^2(y)\psi(y)+U(y)\right]\, dy}_{(I)}+\underbrace{\int_{|y|<\frac{|x|}{2}}G(x, y)\left[3\nu U^2(y)\psi(y)+U(y)\right]\, dy}_{(II)}\\
 &+\underbrace{\int_{\mathbb R^3\setminus\left(B_{\frac{|x|}{2}}(0)\cup B_{\frac{|x|}{2}}(x)\right)}G(x, y)\left[3\nu U^2(y)\psi(y)+U(y)\right]\, dy}_{(III)}.
 \end{aligned}$$
 We analyze the three terms separately and we use the fact the exponential decay of $U$ (see Remark \ref{rema1}).
 \begin{enumerate}
 \item[(I)] Let $|x-y|<\frac{|x|}{2}$. Then $$|y|=|y-x+x|\geq |x|-|x-y|\geq \frac{|x|}{2}$$ and $$|x-y|+|y|\geq |x|-|y|+|y|=|x|.$$ Then
$$\begin{aligned}
(I)&\lesssim\int_{|x-y|<\frac{|x|}{2}}\frac{e^{-\sqrt\lambda|x-y|}}{|x-y|}\left[3\nu \frac{e^{-2\sqrt\lambda|y|}}{|y|^2}e^{-\sqrt\gamma |y|}+\frac{e^{-\sqrt\lambda|y|}}{|y|}\right]\\
&\lesssim \int_{|x-y|<\frac{|x|}{2}}\left[\frac{e^{-\sqrt\lambda(|x-y|+|y|)}}{|x-y|}\frac{e^{-(\sqrt\lambda+\sqrt\gamma)|y|}}{|y|^2}+\frac{e^{-\sqrt\lambda(|x-y|+|y|)}}{|x-y||y|}\right]\, dy\\
&\lesssim \frac{e^{-\sqrt\lambda|x|}}{|x|^2}\int_{|x-y|<\frac{|x|}{2}}\frac{1}{|x-y|}\, dy+\frac{e^{-\sqrt\lambda|x|}}{|x|}\int_{|x-y|<\frac{|x|}{2}}\frac{1}{|x-y|}\, dy \\
&\lesssim |x|e^{-\sqrt\lambda|x|}. \end{aligned}$$

\item[(II)] Let $|y|<\frac{|x|}{2}$. Then $$|x-y|\geq |x|-|y|\geq \frac{|x|}{2}$$ and again $$|x-y|+|y|\geq |x|-|y|+|y|=|x|.$$ Then
$$\begin{aligned}
(II)&\lesssim\int_{|y|<\frac{|x|}{2}}\frac{e^{-\sqrt\lambda|x-y|}}{|x-y|}\left[3\nu \frac{e^{-2\sqrt\lambda|y|}}{|y|^2}e^{-\sqrt\gamma |y|}+\frac{e^{-\sqrt\lambda|y|}}{|y|}\right]\\
&\lesssim \int_{|y|<\frac{|x|}{2}}\left[\frac{e^{-\sqrt\lambda(|x-y|+|y|)}}{|x-y|}\frac{e^{-(\sqrt\lambda+\sqrt\gamma)|y|}}{|y|^2}+\frac{e^{-\sqrt\lambda(|x-y|+|y|)}}{|x-y||y|}\right]\, dy\\
&\lesssim \frac{e^{-\sqrt\lambda|x|}}{|x|}\int_{|y|<\frac{|x|}{2}}\frac{1}{|y|^2}\, dy+\frac{e^{-\sqrt\lambda|x|}}{|x|}\int_{|y|<\frac{|x|}{2}}\frac{1}{|y|}\, dy \\
&\lesssim |x|e^{-\sqrt\lambda|x|}. \end{aligned}$$

\item[(III)] Let $|y|>\frac{|x|}{2}$ and $|x-y|>\frac{|x|}{2}$.\\
Let
 $A_0:=\left(B_{\frac{|x|}{2}}(0)\cup B_{\frac{|x|}{2}}(x)\right)^c$. Then $$A_0:=\underbrace{\left(B^c_{|x|}(0)\cap B_{\frac{|x|}{2}}^c(x)\right)}_{:=A_1}\cup \underbrace{\left[\left(B_{|x|}(0)\setminus B_{\frac{|x|}{2}}(0)\right)\setminus \left( B_{|x|}(0)\cap  B_{\frac{|x|}{2}}(x)\right)\right]}_{:=A_2}$$ with $A_1\cap A_2=\emptyset.$ We remark that in $A_2$ $|x-y|>\frac{|x|}{2}$. Then $$\begin{aligned}
(III)&\lesssim\int_{A_1}\frac{e^{-\sqrt\lambda|x-y|}}{|x-y|}\left[3\nu \frac{e^{-2\sqrt\lambda|y|}}{|y|^2}e^{-\sqrt\gamma |y|}+\frac{e^{-\sqrt\lambda|y|}}{|y|}\right]\\
&+\int_{A_2}\frac{e^{-\sqrt\lambda|x-y|}}{|x-y|}\left[3\nu \frac{e^{-2\sqrt\lambda|y|}}{|y|^2}e^{-\sqrt\gamma |y|}+\frac{e^{-\sqrt\lambda|y|}}{|y|}\right]\\
&\lesssim \frac{e^{-\sqrt\lambda|x|}}{|x|^2}\int_{A_1}\frac{e^{-\sqrt\lambda|x-y|}}{|x-y|}e^{-(\sqrt\lambda+\sqrt\gamma)|y|}\, dy +\frac{e^{-\sqrt\lambda|x|}}{|x|}\int_{A_1}\frac{e^{-\sqrt\lambda|x-y|}}{|x-y|}\, dy\\
&+\frac{e^{-\sqrt\lambda|x|}}{|x|}\int_{A_2}\frac{e^{-(\sqrt\lambda+\sqrt\gamma)|y|}}{|y|^2}\, dy +\frac{e^{-\sqrt\lambda|x|}}{|x|}\int_{A_2}\frac{1}{|y|}\, dy\\
&\lesssim \frac{e^{-\sqrt\lambda|x|}}{|x|}\int_{\mathbb R^3}\frac{e^{-\sqrt\lambda|y|}}{|y|}\, dy+\frac{e^{-\sqrt\lambda|x|}}{|x|}\int_{B_{|x|}(0)\setminus B_{\frac{|x|}{2}}(0)}\frac{1}{|y|}\, dy\\ &\lesssim \frac{1}{|x|}e^{-\sqrt\lambda|x|}. \end{aligned}$$
 \end{enumerate}
 At the end we get that $$\psi(x)\lesssim |x| e^{-\sqrt\lambda |x|}.$$
 Hence, since $$\varphi_i (x)=a_{i1}\varphi_1(0)\left[\psi\left(x+\frac{P_i}{\e}\right)+\psi\left(x-\frac{P_i}{\e}\right)\right]$$ then we conclude that there is $R_0>0$ such that  $$|\varphi_i(x)|\lesssim \e^{\frac N 2}\left(\left(\left|x+\frac{P_i}{\e}\right|^{\frac{N-1}{2}}\right)e^{-\sqrt{\omega_i}\left|x+\frac{P_i}{\e}\right|}+\left(\left|x-\frac{P_i}{\e}\right|\right)^{\frac{N-1}{2}}e^{-\sqrt{\omega_i}\left|x-\frac{P_i}{\e}\right|}\right),\quad \mbox{for}\,\, |x|\geq R_0.$$
 \end{remark}

 \subsection{Rewriting the problem}
 Plugging the ansatz into  \eqref{DS1} we obtain the following equivalent system

 \beq\label{DS2}
 \mathcal L(\Phi)=\mathcal E+\mathcal N(\Phi)\eeq 
where the linear operator $\mathcal L(\Phi)=(\mathcal L_1, \mathcal L_2, \mathcal L_3)$ is defined by
 \begin{equation}\label{lin1}  \begin{aligned}
 \mathcal L_1&:=-\Delta\Phi_1+\left(V(x)-3\mu_1(\Upsilon+\varphi_1)^2\right)\Phi_1-\sum_{i=2}^3a_{1i}(\Upsilon +\varphi_1)\Phi_i\left(\frac x \e\right)\\
 &-\sum_{i=2}^3a_{1i}\Phi_1\left(\Ui+\varphi_i\right)\left(\frac x \e\right)
 \end{aligned}\end{equation}
 while
  \begin{equation}\label{lin2}
 \begin{aligned}
 \mathcal L_2&:=-\Delta\Phi_2+\left(W_2(\e x)-3\mu_2(\Uae+\varphi_2)^2\right)\Phi_2-a_{21}(\Uae+\varphi_2)\Phi_1(\e x)\\ &-\beta a_{23}(\Ube+\varphi_3)\Phi_2-a_{21}(\Upsilon+\varphi_1)(\e x)\Phi_2
 \end{aligned}\end{equation}
 and 
 \begin{equation}\label{lin3}
 \begin{aligned}
 \mathcal L_3&:=-\Delta\Phi_3+\left(W_3(\e x)-3\mu_3(\Ube+\varphi_3)^2\right)\Phi_3-a_{31}(\Ube+\varphi_3)\Phi_1(\e x)\\ &-\beta a_{32}(\Ube+\varphi_3)\Phi_2-a_{31}(\Upsilon+\varphi_1)(\e x)\Phi_3
 \end{aligned}\end{equation}
 the error term $\mathcal E=(\mathcal E_1, \mathcal E_2, \mathcal E_3)$ is defined by
 \beq\label{E1}\mathcal E_1:=3\mu_1 \Upsilon\varphi_1^2+\mu_1\varphi_1^3+\sum_{i=2}^3 a_{1i}(\Upsilon+\varphi_1)\varphi_i\left(\frac x \e\right)+\sum_{i=2}^3a_{1i}\varphi_1\Ui\left(\frac x \e\right).\eeq
and
\beq\label{E2}\begin{aligned}
\mathcal E_2&:=3\mu_2 \left((\Uap)^2\Uam+(\Uam)^2\Uap\right)+6\mu_2 \Uap\Uam\varphi_2+3\mu_2\Uae\varphi_2^2+\mu_2\varphi_2^3\\
&+(\omega_2-\omega_2(\e x))(\Uae+\varphi_2)+a_{21}\Uae(\varphi_1(\e x)-\varphi_1(0))+a_{21}\varphi_1(\e x)\varphi_2\\
&+\beta a_{23}(\Uae+\varphi_2)(\Ube+\varphi_3)
.\end{aligned}\eeq
\beq\label{E3}\begin{aligned}
\mathcal E_3&:=3\mu_3 \left((\Ubp)^2\Ubm+(\Ubm)^2\Ubp\right)+6\mu_2 \Ubp\Ubm\varphi_3+3\mu_3\Ube\varphi_3^2+\mu_3\varphi_3^3\\
&+(\omega_3-\omega_3(\e x))(\Ube+\varphi_3)+a_{31}\Ube(\varphi_1(\e x)-\varphi_1(0))+a_{31}\varphi_1(\e x)\varphi_3\\
&+\beta a_{32}(\Uae+\varphi_2)(\Ube+\varphi_3)
.\end{aligned}\eeq
 The nonlinear term $\mathcal N=(\mathcal N_1,  \mathcal N_2, \mathcal N_3)$ is defined by
 $$\mathcal N_1(\Phi_1, \Phi_2, \Phi_3):=3\mu_1(\Upsilon+\varphi_1)\Phi_1^2+\mu_1\Phi_1^3+\sum_{i=2}^3a_{1i}\Phi_1\Phi_i\left(\frac x \e\right)$$
 $$\mathcal N_2(\Phi_1,\Phi_2,\Phi_3):=3\mu_2(\Uae+\varphi_2)\Phi_2^2+\mu_2\Phi_2^3+a_{21}\Phi_1(\e x)\Phi_2+\beta a_{23}\Phi_2\Phi_3-\beta a_{23}(\Uae+\varphi_2)\Phi_3$$ and
 $$\mathcal N_3(\Phi_1,\Phi_2,\Phi_3):=3\mu_3(\Ube+\varphi_3)\Phi_3^2+\mu_3\Phi_3^3+a_{31}\Phi_1(\e x)\Phi_3+\beta a_{32}\Phi_2\Phi_3-\beta a_{32}(\Uae+\varphi_2)\Phi_3.$$
Let $\mathcal K:={\rm span}\left\{(0, \mathcal Z_\e, \mathcal Y_\e)\right\}$ and let $\Pi$ the projection of $L^2(\mathbb R^N)$ into $\mathcal K$ and $\Pi^\bot$ the projection of $L^2(\mathbb R^N)$  into $\mathcal K^\bot$. Then the system \eqref{DS2} is equivalent to 
\begin{equation}\label{DSnonlin1}
\left\{\begin{aligned}
&\Pi\left\{ \mathcal L(\Phi)-\mathcal E-\mathcal N(\Phi)\right\}=0\\
&\Pi^\bot\left\{ \mathcal L(\Phi)-\mathcal E-\mathcal N(\Phi)\right\}=0.\end{aligned}\right.\end{equation}
The strategy is the following:
  \begin{itemize}
  \item First we look for $(\Phi_1, \Phi_2, \Phi_3)\in \mathcal K^\bot$ that solves the second equation in \eqref{DSnonlin1}. 
\item Then, if we denote by $(\Phi_1, \Phi_2, \Phi_3)$ the solution of the second equation already founded, then the first equation becomes
\begin{equation}\label{DSnonlin}\left\{\begin{aligned}&\mathcal L_2(\Phi_1, \Phi_2, \Phi_3)-\mathcal E_2-\mathcal N_2(\Phi_1, \Phi_2, \Phi_3)=\mathfrak c_2 \mathcal Z_\e\\
&\mathcal L_3(\Phi_1, \Phi_2, \Phi_3)-\mathcal E_3-\mathcal N_3(\Phi_1, \Phi_2, \Phi_3)=\mathfrak c_3 \mathcal Y_\e\end{aligned}\right.\end{equation} where

$$\mathfrak c_2:=\frac{\left(\mathcal L_2(\Phi_1, \Phi_2, \Phi_3)-\mathcal E_2-\mathcal N_2(\Phi_1, \Phi_2, \Phi_3), \mathcal Z_\e\right)_{L^2(\mathbb R^N)}}{\|\mathcal Z_\e\|_{L^2(\mathbb R^N)}}$$ and 
$$\mathfrak c_3:=\frac{\left(\mathcal L_3(\Phi_1, \Phi_2, \Phi_3)-\mathcal E_3-\mathcal N_3(\Phi_1, \Phi_2, \Phi_3), \mathcal Y_\e\right)_{L^2(\mathbb R^N)}}{\|\mathcal Y_\e\|_{L^2(\mathbb R^N)}}$$
so, in order to find a solution we need to prove that $\mathfrak c_2$ and $\mathfrak c_3$ are zero.
  \end{itemize}
  \section{The solution of the second equation of \eqref{DSnonlin1}}\label{ausiliaria}
  In this section we find a solution $(\Phi_1, \Phi_2, \Phi_3)\in\mathcal K^\bot$ of the second equation of \eqref{DSnonlin1}.
 \subsection{The linear problem.}
First we study the invertibility of the linear operator $\mathcal L$. Indeed, we have the following result.

\begin{lemma}\label{inv}
    There exist $c>0$ and $\e_0>0$ such that for every $\e\in (0,\e_0)$, for every\\ $a_{i1}<0, \, i=2,3$ $$\|\Pi^\bot \mathcal{L} (\Phi_1,\Phi_2, \Phi_3)\|_{L^2(\RN)\times L^2(\RN)\times L^2(\RN)} \geq c \|(\Phi_1,\Phi_2, \Phi_3)\|_X \quad \forall (\Phi_1,\Phi_2, \Phi_3)\in \mathcal{K}^\bot.$$
\end{lemma}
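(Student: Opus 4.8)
The statement is a coercivity (invertibility) estimate for the linearized operator $\mathcal L$ restricted to $\mathcal K^\bot$, and the natural route is the standard contradiction-plus-blow-up argument used in Lyapunov–Schmidt reductions. Suppose the conclusion fails: then there are sequences $\e_n\to 0$, coefficients $a_{i1}^{(n)}<0$, and functions $(\Phi_1^n,\Phi_2^n,\Phi_3^n)\in\mathcal K^\bot$ with $\|(\Phi_1^n,\Phi_2^n,\Phi_3^n)\|_X=1$ but $\|\Pi^\bot\mathcal L(\Phi_1^n,\Phi_2^n,\Phi_3^n)\|_{L^2\times L^2\times L^2}\to 0$. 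Write $\Pi^\bot\mathcal L(\Phi^n)=:(h_1^n,h_2^n,h_3^n)\to 0$ in $L^2$; equivalently $\mathcal L(\Phi^n)=(h_1^n,h_2^n,h_3^n)+ (0,\mathfrak c_2^n\mathcal Z_{\e_n},\mathfrak c_3^n\mathcal Y_{\e_n})$ for scalars $\mathfrak c_i^n$, and a first step is to show $\mathfrak c_i^n\to 0$: test the equations against $\mathcal Z_{\e_n}$, $\mathcal Y_{\e_n}$, use that these are bounded in $H^1$ after rescaling, that $\|\mathcal Z_{\e_n}\|_{L^2},\|\mathcal Y_{\e_n}\|_{L^2}$ are bounded below, and that the cross terms involving $\varphi_i$, $\beta$, and the potential differences are $o(1)$ by Lemmas \ref{hatvarphi}, \ref{esistphii} and the decay in Remark \ref{decay}. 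Hence we may assume $\mathcal L(\Phi^n)\to 0$ in $L^2\times L^2\times L^2$.

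The next step is the blow-up analysis around each of the relevant scales. Because $\|\Phi^n\|_X=1$, up to a subsequence $\Phi_1^n\weakto \Phi_1^\infty$ in $H^2_V$ weakly (and strongly in $H^1_{loc}$), and $\Phi_1^\infty$ solves $-\Delta\Phi_1^\infty+(V(x)-3\mu_1\Upsilon^2)\Phi_1^\infty=0$ in $\R^N$ — here one uses that the coupling terms $a_{1i}(\Upsilon+\varphi_1)\Phi_i^n(x/\e_n)$ and $a_{1i}\Phi_1^n(\Ui+\varphi_i)(x/\e_n)$ tend to $0$ in $L^2$ because $\Ui(\cdot/\e_n)$ and $\varphi_i(\cdot/\e_n)$ concentrate and vanish weakly. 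By the nondegeneracy hypothesis $\mathbf{(V_1)}$ on $\Upsilon$ in $H^1_e(\R^N)$ (the $\Phi_i^n$ being even), $\Phi_1^\infty=0$. For the second and third components, rescale: set $\tilde\Phi_i^n(y):=\Phi_i^n(y\pm P_i^n/\e_n)$ and pass to the limit around each of the two peaks $\pm P_i/\e_n$ (which are mutually far apart, $|P_2\pm P_3|/\e\to\infty$, and each peak drifts to $\infty$). Since $W_i(\e_n x)\to W_i(0)$, $\Upsilon(\e_n x)\to\Upsilon(0)$ near each peak, and the $\beta$-terms are controlled, $\tilde\Phi_i^n\weakto Z_i$ solving $-\Delta Z_i+\omega_i Z_i=3\mu_i \Um^2 Z_i$ in $\R^N$; by Remark \ref{rema1} the kernel is spanned by $\partial_k\Um$, and the orthogonality $\int\Phi_2^n\mathcal Z_{\e_n}=0$, $\int\Phi_3^n\mathcal Y_{\e_n}=0$ (together with evenness, which kills the components $\partial_k\Um$ for $k\neq 1$ in the second equation and $k\neq 2$ in the third) forces $Z_i=0$.

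Finally, one upgrades the weak-vanishing to a contradiction with $\|\Phi^n\|_X=1$. The point is to show that $\Phi_1^n\to 0$ strongly in $H^2_V$ and $\Phi_i^n\to 0$ strongly in the norms $\|\cdot\|_{i,\e_n}$; this is done by testing each equation in $\mathcal L(\Phi^n)\to 0$ against $\Phi_1^n$, $\Phi_2^n$, $\Phi_3^n$ respectively and reading off the $L^2$-norm of the gradient and of $V^{1/2}\Phi_1^n$, $W_i(\e_n\cdot)^{1/2}\Phi_i^n$ in terms of $o(1)$ plus terms like $\int 3\mu_1\Upsilon^2 (\Phi_1^n)^2$, $\int 3\mu_i(\Umm^2+\Ump^2)(\Phi_i^n)^2$, which vanish in the limit by the local strong convergence to $0$ established above (the potential-type weights $\Upsilon^2$, $\Um^2$ localize the integrals to the regions where $\Phi^n\to 0$ in $L^2_{loc}$); the $W^{2,m}$ bound on second derivatives is then recovered from $\mathbf{(V_2)}$ and elliptic estimates for the rescaled equations. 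This contradicts $\|\Phi^n\|_X=1$ and proves the lemma. The main obstacle is the bookkeeping at the two well-separated concentration scales of $u_2$ and $u_3$ simultaneously — ensuring that every cross-interaction term ($\beta a_{23}$, $\beta a_{32}$, the $\varphi_i$ corrections, and $a_{i1}(\Upsilon+\varphi_1)(\e\cdot)$) is genuinely negligible uniformly in $n$ despite $a_{i1}^{(n)}$ ranging over all negative values — which is why the hypothesis is only $a_{i1}<0$ and no lower bound on $|a_{i1}|$ is needed: the dangerous term $a_{i1}(\Upsilon+\varphi_1)(0)\Phi_i^n$ has the good sign (it adds a positive multiple of $\Phi_i^n$ to the coercive part), while all the rest is $o(1)$.
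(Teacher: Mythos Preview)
Your proposal is correct and follows essentially the same contradiction--blow-up strategy as the paper: show the Lagrange multipliers $\mathfrak c_i^n\to 0$ by testing against $\mathcal Z_{\e_n},\mathcal Y_{\e_n}$, use the nondegeneracy of $\Upsilon$ in $H^1_e$ for the first component and the nondegeneracy of $\Um$ together with orthogonality and evenness for the translated components, and then upgrade to strong convergence by testing each equation with $\Phi_i^n$, where the sign $a_{i1}<0$ makes the term $-a_{i1}\Upsilon(\e_n x)\Phi_i^2$ nonnegative. The only cosmetic differences are the ordering of the steps (the paper first establishes $\Phi_{1,n}\to 0$ strongly in $H^2_V$ before handling the $\mathfrak c_{i,n}$, and defers the details of the first-component and translated-limit arguments to \cite{PPVV}) and that in the paper the $a_{i1}$ are fixed constants rather than sequences, so the uniformity issue you raise at the end does not actually arise.
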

\begin{proof}
    We argue by contradiction and suppose that there exist $\e_n\longrightarrow0 $ and \\ $(\Phi_{1,n},\Phi_{2,n}, \Phi_{3,n})\in \mathcal{K}^\bot$ with $\|(\Phi_{1,n},\Phi_{2,n}, \Phi_{3,n})\|_{X}=1$ such that
     \beq\label{P}
 \left\{\begin{aligned}
 &\mathcal L_1(\Phi_1, \Phi_2, \Phi_3)=f_n\\
  &\mathcal L_2(\Phi_1, \Phi_2, \Phi_3)=g_n+\mathfrak c_{2, n} \mathcal Z_{\e_n}\\
    &\mathcal L_3(\Phi_1, \Phi_2, \Phi_3)=h_n+\mathfrak c_{3, n} \mathcal Y_{\e_n}
  \end{aligned}\right.\eeq

where
$$f_n, g_n, h_n \longrightarrow 0 \quad \mbox{in}\,  L^2(\RN)$$
$$\int_{\RN} g_n \mathcal Z_{\e_n}\, dx=\int_{\RN} h_n \mathcal Y_{\e_n}\, dx=0$$
and $\mathfrak c_{2, n}, \mathfrak c_{3, n} \in \mathbb R.$\\
\textbf{STEP 1:}\, {$\Phi_{1,n}\longrightarrow0$ strongly in $H^2_V(\RN).$}\\
To prove this step, we can reason as in Lemma 2.5 of \cite{PPVV}.\\ We only remark that, by Sobolev embeddings, it follows that $\Phi_{1, n}\longrightarrow 0$ in $L^\infty(\RN).$\\

\textbf{STEP 2:} $\mathfrak c_{2, n}, \mathfrak c_{3, n} \longrightarrow 0.$\\
We show only that $\mathfrak c_{2, n}\to 0$ since similarly, it follows that $\mathfrak c_{3, n}\longrightarrow 0.$\\\\ To this aim, we test the second equation with $ \mathcal Z_{\e_n}$ recalling that $ \mathcal Z_{\e_n}$ solves 
$$-\Delta  \mathcal Z_{\e_n}+\omega_2  \mathcal Z_{\e_n}=3\mu_2 \left((\mathcal{U}_{2,P_{2_n}})^2\partial_1\mathcal{U}_{2,P_{2_n}}-(\mathcal{U}_{2,-P_{2_n}})^2\partial_1\mathcal{U}_{2,-P_{2_n}}\right).$$
Therefore, we get, since $\int_{\RN} g_n \mathcal Z_{\e_n}=0$, 
\begin{equation*}
    \begin{aligned}
        \mathfrak c_{2, n} \int_{\RN} \mathcal Z_{\e_n}^2&=\int_{\RN}\left[\omega_2(\e_n x) -\omega_2\right] \mathcal Z_{\e_n}\Phi_{2,n}\\
        &-3\mu_2\int_{\RN} \left[\mathcal{U}^2_{2,\e_n}  \mathcal Z_{\e_n}- ((\mathcal{U}_{2,P_{2_n}})^2\partial_1\mathcal{U}_{2,P_{2_n}}-(\mathcal{U}_{2,-P_{2_n}})^2\partial_1\mathcal{U}_{2,-P_{2_n}}) \right] \Phi_{2,n}\\
        &-a_{21}\int_{\RN}(\mathcal{U}_{2,\e_n}+\varphi_{2,n})\Phi_{1,n}(\e_n x) \mathcal Z_{\e_n}-a_{21}\int_{\RN}\varphi_{1,n}(\e_n x)\Phi_{2,n}  \mathcal Z_{\e_n}\\
        &-\beta a_{23}\int_{\RN}(\mathcal{U}_{3,\e_n}+\varphi_{3,n})\Phi_{2,n}  \mathcal Z_{\e_n}-3\mu_2 \int_{\RN}\varphi_{2,n}(\varphi_{2,n}+2\mathcal{U}_{2,\e_n})\Phi_{2,n}  \mathcal Z_{\e_n}\\
        &\lesssim \|[\omega_2 (\e_n x)-\omega_2] \mathcal Z_{\e_n}\|_{L^2} \|\Phi_{2,n}\|_{L^2}\\\\
        &+\|[\mathcal{U}^2_{2,\e_n}  \mathcal Z_{\e_n}- ((\mathcal{U}_{2,P_{2_n}})^2\partial_1\mathcal{U}_{2,P_{2_n}}-(\mathcal{U}_{2,-P_{2_n}})^2\partial_1\mathcal{U}_{2,-P_{2_n}})]\|_{L^2}\|\Phi_{2,n}\|_{L^2}\\\\
        &+\|\Phi_{1,n}\|_{L^\infty}+\|\varphi_{1,n}\|_{L^\infty}+ \|\varphi_{2,n}\|_{L^\infty}+ \|\varphi_{3,n}\|_{L^\infty}+\left(\int_{\RN}|\mathcal{U}_{3,\e_n} \mathcal Z_{\e_n} |^2\right)^{\frac 12}   \end{aligned}
\end{equation*}
using the exponential decay of $\mathcal{U}$ and its derivatives.\\
We get that
$$|\omega_2(y)-\omega_2|\lesssim  |y|\quad {if}\quad |y|\leq \sigma$$
for some $\sigma>0$ and so 
$$\|[\omega_2(\e_n x)-\omega_2]\mathcal Z_{\e_n}\|_{L^2}=o(1).$$
Moreover, a direct computation and Lemma \ref{ACR} shows that
$$\|[\mathcal{U}^2_{2,\e_n} Z_{\e_n}- ((\mathcal{U}_{2,P_{2_n}})^2\partial_1\mathcal{U}_{2,P_{2_n}}-(\mathcal{U}_{2,-P_{2_n}})^2\partial_1\mathcal{U}_{2,-P_{2_n}})]\|_{L^2}=o(1).$$
It's possible to show that all the other integral terms on the right hand side tend to zero by applying Lemma \ref{hatvarphi}, Lemma \ref{esistphii}, Step 1 and by Sobolev embeddings.\\
Moreover, by using Lemma \ref{ACR} we get that
\begin{equation}\label{utile}\begin{aligned}&\int_{\RN} |\mathcal{U}_{3,\e_n} \mathcal Z_{\e_n} |^2 \lesssim \int_{\RN} |\mathcal{U}_{3,\e_n} \mathcal U_{2, \e_n} |^2\\ &\lesssim \left\{\begin{aligned} &e^{-2\min\{\sqrt{\omega_2}, \sqrt{\omega_3}\}\frac{|P_2\pm P_3|}{\e_n}}\left(\frac{|P_2\pm P_3|}{\e_n}\right)^{-N+1}\quad &\mbox{if}\, \omega_2\neq \omega_3\\ &e^{-2\min\{\sqrt{\omega_2}, \sqrt{\omega_3}\}\frac{|P_2\pm P_3|}{\e_n}}\left(\frac{|P_2\pm P_3|}{\e_n}\right)^{-\frac 12}\quad &\mbox{if}\, \omega_2=\omega_3\,\mbox{and}\, N=2\\
&e^{-2\min\{\sqrt{\omega_2}, \sqrt{\omega_3}\}\frac{|P_2\pm P_3|}{\e_n}}\left(\frac{|P_2\pm P_3}{\e_n}\right)^{-N+1}\log\left(\frac{|P_2\pm P_3|}{\e_n}\right)\quad &\mbox{if}\, \omega_2=\omega_3\,\mbox{and}\, N=3\end{aligned}\right.\\
&=o(1)\end{aligned}
\end{equation} as $\e_n\to 0$ since $\frac{|P_2\pm P_3|}{\e_n}\to+\infty$ as $\e_n\to 0$.\\
Finally, since it is immediate to check that $\|\mathcal Z_{\e_n}\|_{L^2}=C+o(1)$ for some $C>0,$ we deduce that $\mathfrak c_{2, n}=o(1).$\\ Similarly $\mathfrak c_{3, n}=o(1)$.   \\\\ \textbf{STEP 3:}\, Let us now introduce the sequences:
$$\Tilde{\Phi}_{-i,n}(x):=\Phi_{i,n}\left(x-\frac{P_{i,\e_n}}{\e_n}\right), \quad \Tilde{\Phi}_{+i,n}(x):=\Phi_{i,n}\left(x+\frac{P_{i,\e_n}}{\e_n}\right)\quad i=2, 3.$$
Then (up to subsequences) $\Tilde{\Phi}_{\pm i ,n}\rightharpoonup 0$ weakly in $H^1(\RN)$ and strongly in $L^2_{loc}(\RN).$\\\\
Again for proving this step we can reason as in Lemma 2.5 of \cite{PPVV}.\\\\
\textbf{STEP 4}\, Let us prove that a contradiction arise.\\
First, let us prove that $\|\Phi_{2,n}\|_{L^2}=o(1)$ (analogously $\|\Phi_{3,n}\|_{L^2}=o(1)$).\\
By testing the second equation with $\Phi_{2,n}$ and recalling that $a_{i1}<0$ we deduce that
\begin{equation*}
    \begin{aligned}
        \int_{\RN} |\nabla \Phi_{2,n}|^2+W_2(\e_n x) \Phi_{2,n}^2&= \int_{\RN} 3\mu_2 \mathcal{U}^2_{2,\e_n}\Phi_{2,n}^2 +a_{21}\int_{\RN}\Upsilon(\e_n x)\Phi_{2,n}^2\\
        &+a_{21}\int_{\RN}\varphi_{1,n}(\e_n x)\Phi_{2,n}^2+ 3\mu_2 \int_{\RN}\varphi_{2,n}(\varphi_{2,n}+2\mathcal{U}_{2,\e_n})\Phi_{2,n}^2\\
        &+a_{21}\int_{\RN}(\mathcal{U}_{2,\e_n}+\varphi_{2,n})\Phi_{1,n}(\e_n x) \Phi_{2,n}\\
        &+\beta a_{23}\int_{\RN}(\mathcal{U}_{3,\e_n}+\varphi_{3,n})\Phi_{3,n} \Phi_{2,n}+\int_{\RN} g_n\Phi_{2,n}\lesssim\\
        &\lesssim \int_{\RN} 3\mu_2 \mathcal{U}^2_{2,\e_n}\Phi_{2,n}^2+o(1)
    \end{aligned}
\end{equation*}
where we have repeatedly applied Lemma \ref{hatvarphi}, Lemma \ref{esistphii}, \eqref{utile} and that $g_n\longrightarrow 0$ strongly in $L^2(\RN).$\\
Concerning the last term, we have that 
\begin{equation*}
    \begin{aligned}
        \int_{\RN}  \mathcal{U}^2_{2,\e_n}\Phi_{2,n}^2&= \int_{\RN} \mathcal{U}^2_{2,P_{2_n}}\Phi_{2,n}^2+\int_{\RN} \mathcal{U}^2_{2,-P_{2_n}}\Phi_{2,n}^2 +2\int_{\RN} \mathcal{U}_{2,P_{2_n}}\mathcal{U}_{2,-P_{2_n}}\Phi_{2,n}^2\lesssim\\
        &\lesssim\int_{\RN} \mathcal{U}^2_2 \Tilde{\Phi}^2_{+2,n}+\int_{\RN}\mathcal{U}^2_2 \Tilde{\Phi}^2_{-2,n}+\|\Phi_{2,n}\|_{L^\infty}\int_{\RN} \mathcal{U}_{2,P_{2_n}}\mathcal{U}_{2,-P_{2_n}}=o(1)
    \end{aligned}
\end{equation*}
because $\Tilde{\Phi}_{\pm 2, n}\longrightarrow0$ strongly in $L^2_{loc}(\RN)$ (as shown in the previous step) and $\mathcal{U}_2$ decays exponentially. \\
This implies that 
$\Phi_{2,n}\longrightarrow0$ strongly in $H^1(\RN)$ thank to $\bf {(W_2)}$ (like $\Phi_{3,n}$).\\
Finally, let us prove that a contradiction arises by showing that also $\Phi_{2,n}\longrightarrow0$ strongly in $H^2_{W_2,\e_n}(\RN)$ (and too $\Phi_{3,n}\longrightarrow0$ strongly in $H^2_{W_3,\e_n}(\RN)).$\\
In order to show this, it is enough to use the hypothesis $\bf {(V_2)}$ and to check that the $L^2(\RN)-$ norm of the right hand side of the second equation in (\ref{P}) goes to zero. \\
Indeed, by Lemma \ref{hatvarphi}, taking into account that 
$$\|\Phi_{2,n}\|_{L^2}\longrightarrow0,\|\Phi_{3,n}\|_{L^2}\longrightarrow0, \|\Phi_{1,n}\|_{L^\infty}\longrightarrow0 $$
it results 
\begin{equation*}
    \begin{aligned}
        \|R.H.S\|_{L^2}&\lesssim \|3\mu_2 \mathcal{U}^2_{2,\e_n}\Phi_{2,n}\|_{L^2}+\|3\mu_2 \varphi_{2,n}(\varphi_{2,n}+2\mathcal{U}_{2,\e_n})\Phi_{2,n}\|_{L^2}\\
        &+\|(\Upsilon+\varphi_{1,n})(\e_n x)\Phi_{2,n}\|_{L^2}+\|(\mathcal{U}_{3,\e_n}+\varphi_{3,n})\Phi_{2,n}\|_{L^2}+\|(\mathcal{U}_{2,\e_n}+\varphi_{2,n})\Phi_{1,n}(\e_n x)\|_{L^2}\\
        &+ \|g_n\|_{L^2}+|\mathfrak c_{2, n}| \|\mathcal Z_{\e_n}\|_{L^2}\lesssim\\
        &\lesssim \|\Phi_{2,n}\|_{L^2}+\|\Phi_{1,n}\|_{L^\infty}+\|\Phi_{3,n}\|_{L^2}+ \|g_n\|_{L^2}+|\mathfrak c_{2, n}|=o(1).
    \end{aligned}
\end{equation*}
\end{proof}

 \subsection{The error term} In this section we evaluate the size of the error term $(\E_1, \E_2, \E_3)$. \\ First we introduce some notations.\\ We let $\mathfrak m_0:=\min\{\omega_2, \omega_3\}$ and we let $$|\rho_\e|:=\sqrt{\rho_{2, \e}^2+\rho_{3, \e}^2}=|P_3-P_2|=|P_3+P_2|.$$
 Moreover, we define
 $$\begin{aligned} \eta_\e &=  \left\{\begin{aligned}&e^{-\sqrt{\mathfrak m_0}\frac{|\rho_\e|}{\e}}\left(\frac{|\rho_\e|}{\e}\right)^{-\frac {N-1}{2}}\quad &\mbox{if}\,\, \omega_2\neq\omega_3\,\, \mbox{and}\,\, N=2, 3\\
&e^{-\sqrt{\mathfrak m_0}\frac{|\rho_\e|}{\e}}\left(\frac{|\rho_\e|}{\e}\right)^{-\frac 14}\quad &\mbox{if}\,\, \omega_2=\omega_3\,\, \mbox{and}\,\, N=2\\
&e^{-\sqrt{\mathfrak m_0}\frac{|\rho_\e|}{\e}}\left(\frac{|\rho_\e|}{\e}\right)^{-1}\log^{\frac 12}\frac{|\rho_\e|}{\e}\quad &\mbox{if}\,\, \omega_2=\omega_3\,\, \mbox{and}\,\, N=3\end{aligned}\right.\end{aligned}$$
We prove the following lemma.
  \begin{lemma}\label{errore}
 There exist $\e_0>0$ such that for every $\e\in (0, \e_0)$  $$\|(\E_1, \E_2, \E_3)\|_{L^2(\mathbb R^N)\times L^2(\mathbb R^N)\times L^2(\mathbb R^N)}\lesssim \e^{2}|\ln\e|^2+\sum_{i=2}^3e^{-2\sqrt{\omega_i}\frac{\rho_{i, \e}}{\e}}\left(\frac{\rho_{i, \e}}{\e}\right)^{-\frac{N-1}{2}}+|\beta|\eta_\e$$ 
 \end{lemma}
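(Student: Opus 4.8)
The plan is to estimate in $L^{2}(\R^{N})$, one at a time, each of the finitely many summands appearing in the explicit formulas \eqref{E1}, \eqref{E2}, \eqref{E3} for $\E_{1},\E_{2},\E_{3}$, and then to collect the three leading contributions. All the tools are already available: the bounds $\|\varphi_{1}\|_{W^{2,m}(\R^{N})}+\|\varphi_{1}\|_{C^{1,1-N/m}(\R^{N})}\lesssim\e^{N/m}$ and the oscillation estimate \eqref{stima1} (Lemma \ref{hatvarphi}, Remark \ref{stimaphi1}); the bound $\|\varphi_{i}\|_{H^{2}(\R^{N})}\lesssim\e^{N/2}$ together with the exponential localization of $\varphi_{i}$ around $\pm P_{i}/\e$ (Lemma \ref{esistphii}, Remark \ref{decay}); the exponential decay \eqref{eq:Udecay} of $\Um$; the interaction estimate of Lemma \ref{ACR}; the Sobolev embedding $H^{2}(\R^{N})\hookrightarrow L^{\infty}(\R^{N})$, valid for $N\le 3$; and the constraint $\rho_{i,\e}\in\mathcal D^{j}_{i,\e}$, which forces $\rho_{i,\e}/\e=O(|\ln\e|)$ and hence $|\rho_{\e}|/\e=O(|\ln\e|)\to+\infty$. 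All the estimates below are uniform in $\rho_{i,\e}\in\mathcal D^{j}_{i,\e}$.

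First I would dispose of all the lower order summands: every term of $\E_{1}$, together with the cubic terms $\Um\varphi_{i}^{2}$ and $\varphi_{i}^{3}$, the mixed terms $\Ump\Umm\,\varphi_{i}$ and $\varphi_{1}(\e x)\,\varphi_{i}$ in $\E_{2},\E_{3}$, and every piece of the $\beta$-products containing a factor $\varphi_{j}$, factorizes as a uniformly bounded function (a combination of $\Upsilon$, $\Um$, and $L^{\infty}$-small powers of the $\varphi$'s) times a function carrying an $\e$-gain. Using $\|\varphi_{1}\|_{L^{\infty}}+\|\varphi_{i}\|_{L^{\infty}}\lesssim\e^{N/2}$ (Sobolev) together with the scaling identity $\|\varphi_{i}(\cdot/\e)\|_{L^{2}(\R^{N})}=\e^{N/2}\|\varphi_{i}\|_{L^{2}}$ and $\|\Ui(\cdot/\e)\|_{L^{2}(\R^{N})}\lesssim\e^{N/2}$, a count of powers of $\e$ shows each such term is $O(\e^{N})=o(\e^{2}|\ln\e|^{2})$; in particular $\|\E_{1}\|_{L^{2}(\R^{N})}\lesssim\e^{N}$, so $\E_{1}$ contributes nothing new.

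It then remains to treat the four families of principal summands. (a)~For $(\omega_{i}-\omega_{i}(\e x))(\Ui+\varphi_{i})$: since $W_{i}$ and $\Upsilon$ are even, $\omega_{i}$ is even, so $\nabla\omega_{i}(0)=0$, and a second order Taylor expansion gives $|\omega_{i}(\e x)-\omega_{i}(0)|\lesssim|\e x|^{2}$ on the relevant region (the complement being negligible by the exponential decay of $\Um$ and $\varphi_{i}$); writing $x=y\pm P_{i}/\e$ and using $|P_{i}|/\e=\rho_{i,\e}/\e=O(|\ln\e|)$ together with the decay of $\Um$ and $\varphi_{i}$ in the shifted variable, one obtains $\|(\omega_{i}-\omega_{i}(\e x))(\Ui+\varphi_{i})\|_{L^{2}(\R^{N})}\lesssim\e^{2}(1+|\ln\e|)^{2}\lesssim\e^{2}|\ln\e|^{2}$. (b)~For $a_{i1}\Ui(\varphi_{1}(\e x)-\varphi_{1}(0))$: by \eqref{stima1} one has $|\varphi_{1}(\e x)-\varphi_{1}(0)|\lesssim\e^{N/m}|\e x|^{2-N/m}=\e^{2}|x|^{2-N/m}$, so the same shift-and-localize argument (with $|x|=O(|\ln\e|)$ on the support of $\Ui$) yields $\lesssim\e^{2}|\ln\e|^{2-N/m}\lesssim\e^{2}|\ln\e|^{2}$. (c)~For $3\mu_{i}\big((\Ump)^{2}\Umm+(\Umm)^{2}\Ump\big)$: the two bumps lie at mutual distance $2\rho_{i,\e}/\e$, so Lemma \ref{ACR} (or directly \eqref{eq:Udecay}) gives $\lesssim e^{-2\sqrt{\omega_{i}}\,\rho_{i,\e}/\e}\big(\rho_{i,\e}/\e\big)^{-(N-1)/2}$. (d)~For the coupling terms $\beta a_{23}(\Uae+\varphi_{2})(\Ube+\varphi_{3})$ in $\E_{2}$ and $\beta a_{32}(\Uae+\varphi_{2})(\Ube+\varphi_{3})$ in $\E_{3}$: after expanding, the leading piece is $\Uae\Ube$, and since $P_{2}\perp P_{3}$ each of the four products of a bump of $\Uae$ with a bump of $\Ube$ has mutual distance $|P_{2}\pm P_{3}|/\e=|\rho_{\e}|/\e$; Lemma \ref{ACR}, examined separately in the regimes $\omega_{2}\ne\omega_{3}$, $\{\omega_{2}=\omega_{3},\ N=2\}$ and $\{\omega_{2}=\omega_{3},\ N=3\}$, then gives precisely $\|\Uae\Ube\|_{L^{2}(\R^{N})}\lesssim\eta_{\e}$ (the remaining pieces carrying an extra factor $\e^{N/2}$), so the $\beta$-terms are $\lesssim|\beta|\,\eta_{\e}$. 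Summing (a)--(d) over $i=2,3$ and adding the lower order contributions of the previous paragraph yields the claimed estimate.

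The main obstacle is the bookkeeping in (a)--(b): getting the clean rate $\e^{2}|\ln\e|^{2}$, rather than the naive $\e|\ln\e|$, rests on the evenness of $\omega_{i}$ killing the linear Taylor coefficient, combined with the precise size $\rho_{i,\e}\sim c_{i}\,\e|\ln\e|$ of the concentration radii imposed by $\rho_{i,\e}\in\mathcal D^{j}_{i,\e}$. A second, more delicate, point is the correct use of Lemma \ref{ACR} for the product $\Uae\Ube$ in each of the three regimes, which is the source of the somewhat unusual exponents in the definition of $\eta_{\e}$ (in particular the factor $(|\rho_{\e}|/\e)^{-1/4}$ when $\omega_{2}=\omega_{3}$ and $N=2$); here the orthogonality $P_{2}\perp P_{3}$ — so that all four bump pairs sit at the common distance $|\rho_{\e}|/\e$ — and the dichotomy between $\sqrt{\omega_{2}}=\sqrt{\omega_{3}}$ and $\sqrt{\omega_{2}}\ne\sqrt{\omega_{3}}$ play the decisive role.
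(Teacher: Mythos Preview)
Your approach is correct and matches the paper's: term-by-term $L^{2}$ estimates via Lemmas \ref{hatvarphi}, \ref{esistphii}, \ref{ACR}, Remarks \ref{stimaphi1} and \ref{decay}, together with the size constraint $\rho_{i,\e}\in\mathcal D^{j}_{i,\e}$, isolating precisely the four leading contributions (a)--(d) that you list. One small bookkeeping slip to fix: the cross terms $\Ump\Umm\varphi_{i}$ and $\mathcal U_{i,\e}\varphi_{j}$ with $i\neq j$ are not $O(\e^{N})$ from a pure power count (that only yields $\e^{N/2}$, which is \emph{not} absorbed in $\e^{2}|\ln\e|^{2}$ when $N=2,3$); the paper instead bounds them as $o(\cdot)$ of the leading exponential pieces in (c) and (d) respectively by exploiting the spatial separation of the bumps (Lemma \ref{ACR} and the pointwise decay in Remark \ref{decay}), tools you already invoke and should apply to these terms as well.
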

 \begin{proof}
 Let us start by evaluating $\mathcal E_1$ given in \eqref{E1}. We have
 $$\begin{aligned}\|\mathcal E_1\|_{L^2(\mathbb R^N)}&\lesssim \left\|\Upsilon\varphi_1^2\right\|_{L^2(\mathbb R^N)}+\|\varphi_1^3\|_{L^2(\mathbb R^N)}+\sum_{i=2}^3\left\|(\Upsilon+\varphi_1)\varphi_i\left(\frac x \e\right)\right\|_{L^2(\mathbb R^N)}\\
 &+\sum_{i=2}^3\left\|\varphi_1 \Ui\left(\frac x \e\right)\right\|_{L^2(\mathbb R^N)}.\end{aligned}$$
 Now, by using Lemma \ref{hatvarphi} and Sobolev embedding we get that $\|\varphi_1\|_\infty\lesssim \e^{\frac N 2}.$ Moreover by Lemma \ref{esistphii} we get also that $\|\varphi_i\|_{L^2(\mathbb R^N)}\lesssim \e^{\frac N 2}.$ Then
 $$\left\|\Upsilon\varphi_1^2\right\|_{L^2(\mathbb R^N)}\lesssim \e^{N },\quad  \|\varphi_1^3\|_{L^2(\mathbb R^N)}\lesssim \e^{3\frac N 2}$$
 Moreover
 $$\left\|(\Upsilon+\varphi_1)\varphi_i\left(\frac x \e\right)\right\|_{L^2(\mathbb R^N)}\lesssim \e^{\frac N 2}\|\varphi_i\|_{L^2(\mathbb R^N)}+\e^N \|\varphi_i\|_{L^2(\mathbb R^N)}\lesssim \e^{N} $$
 and $$\left\|\varphi_1 \Ui\left(\frac x \e\right)\right\|_{L^2(\mathbb R^N)}\lesssim \e^{\frac N 2}\left\|\Ui\left(\frac x \e\right)\right\|_{L^2(\mathbb R^N)}\lesssim \e^N .$$
 Hence
 $$\|\E_1\|_{L^2(\mathbb R^N)}\lesssim \e^N.$$
For what concerning $\E_2$ defined in \eqref{E2} we get that

 $$\begin{aligned}\|\mathcal E_2\|_{L^2(\mathbb R^N)}&\lesssim\|(\Uap)^2\Uam\|_{L^2(\mathbb R^N)}+\|(\Uam)^2\Uap\|_{L^2(\mathbb R^N)}+\|\Uam\Uap\varphi_2\|_{L^2(\mathbb R^N)}+\| \Uae\varphi_2^2\|_{L^2(\mathbb R^N)}\\
 &+\|\varphi_2^3\|_{L^2(\mathbb R^N)}+\|(\omega_2-\omega_2(\e x))(\Uae+\varphi_2)\|_{L^2(\mathbb R^N)}+\|\Uae(\varphi_1(\e x)-\varphi_1(0))\|_{L^2(\mathbb R^N)}\\ &+\|\varphi_2\varphi_1(\e x)\|_{L^2(\mathbb R^N)}+|\beta|\|(\Uae+\varphi_2)(\Ube+\varphi_3)\|_{L^2(\mathbb R^N)}.\end{aligned}$$
 Reasoning as in Proposition 3.3 of \cite{PPVV} then
 $$\|(\omega_2-\omega_2(\e x))(\Uae+\varphi_2)\|_{L^2(\mathbb R^N)}\lesssim \rho_{2,\e}^2$$
 while
$$ \|(\Uap)^2\Uam\|_{L^2(\mathbb R^N)}+\|(\Uam)^2\Uap\|_{L^2(\mathbb R^N)}\lesssim e^{-2\sqrt{\omega_2}\frac{\rho_{2, \e}}{\e}}\left(\frac{\rho_{2, \e}}{\e}\right)^{-\frac{N-1}{2}}$$ and
$$\|\Uam\Uap\varphi_2\|_{L^2(\mathbb R^N)}\lesssim \|\varphi_2\|_\infty \|\Uam\Uap\|_{L^2(\mathbb R^N)}=o\left(e^{-2\sqrt{\omega_2}\frac{\rho_{2, \e}}{\e}}\left(\frac{\rho_{2, \e}}{\e}\right)^{-\frac{N-1}{2}}\right).$$
Now
$$\|\Uae \varphi_2^2\|_{L^2(\mathbb R^N)}\lesssim \e^N,\qquad \|\varphi_2^3\|_{L^2(\mathbb R^N)}\lesssim \e^{\frac 32 N},\qquad \|\varphi_2\varphi_1(\e x)\|_{L^2(\mathbb R^N}\lesssim \e^N.$$ By using Remark \ref{stimaphi1} then
$$\|\Uae (\varphi_1(\e x)-\varphi_1(0)\|_{L^2(\mathbb R^N)}\lesssim \e^2\left(\int_{\mathbb R^N}\left(\Ua(x)\left|x+\frac{P_2}{\e}\right|^{2-\frac N m}\right)^2\, dx\right)^{\frac 12}\lesssim \e^2\left(\frac{\rho_{2, \e}}{\e}\right)^{2-\frac N m}=o(\rho_{2, \e}^2).$$
It remains to evaluate the new term. Let $\mathfrak m_0:=\min\{\omega_2, \omega_3\}$. Then
$$\begin{aligned}\|\Uae\Ube\|_{L^2(\mathbb R^N)}&\lesssim \|\Uap\Ubp\|_{L^2(\mathbb R^N)}+\|\Uam\Ubm\|_{L^2(\mathbb R^N)}\\
&+\|\Uam\Ubp\|_{L^2(\mathbb R^N)}+\|\Uap\Ubm\|_{L^2(\mathbb R^N)}\end{aligned}$$
We estimate only the first term in the previous inequality since the others can be treated in a similar way. We have that (by using Lemma \ref{ACR})
$$\begin{aligned} \|\Uap\Ubp\|_{L^2(\mathbb R^N)}&\lesssim  \left\{\begin{aligned}&e^{-\sqrt{\mathfrak m_0}\frac{|\rho_\e|}{\e}}\left(\frac{|\rho_\e|}{\e}\right)^{-\frac {N-1}{2}}\quad &\mbox{if}\,\, \omega_2\neq\omega_3\,\, \mbox{and}\,\, N=2, 3\\
&e^{-\sqrt{\mathfrak m_0}\frac{|\rho_\e|}{\e}}\left(\frac{|\rho_\e|}{\e}\right)^{-\frac 14}\quad &\mbox{if}\,\, \omega_2=\omega_3\,\, \mbox{and}\,\, N=2\\
&e^{-\sqrt{\mathfrak m_0}\frac{|\rho_\e|}{\e}}\left(\frac{|\rho_\e|}{\e}\right)^{-1}\log^{\frac 12}\frac{|\rho_\e|}{\e}\quad &\mbox{if}\,\, \omega_2=\omega_3\,\, \mbox{and}\,\, N=3\end{aligned}\right.\\
&:=\eta_\e.\end{aligned}$$
 By using Lemma \ref{esistphii} and Remark \ref{decay} it is easy to show that
 $$\|\Uae\varphi_3\|_{L^2(\mathbb R^N)}=o(\eta_\e),\quad \|\Ube\varphi_2\|_{L^2(\RN)}=o(\eta_\e)$$ and 
$$\|\varphi_2\varphi_3\|_{L^2(\RN)}\lesssim \e^N.$$
 Putting together the various estimates we get that
 $$\|\E_2\|_{L^2(\RN)}\lesssim \e^N+\rho_{2, \e}^2 + e^{-2\sqrt{\omega_2}\frac{\rho_{2, \e}}{\e}}\left(\frac{\rho_{2, \e}}{\e}\right)^{-\frac{N-1}{2}}+|\beta|\eta_\e$$ and 
 $$\|\E_3\|_{L^2(\RN)}\lesssim \e^N+\rho_{3, \e}^2 + e^{-2\sqrt{\omega_3}\frac{\rho_{3, \e}}{\e}}\left(\frac{\rho_{3, \e}}{\e}\right)^{-\frac{N-1}{2}}+|\beta|\eta_\e.$$
 Now, since $\rho_{i, \e}\in\mathcal D_{i, \e}^j$ it is easy to see that the thesis follows.
\end{proof}
We are ready to show the existence of $(\Phi_1, \Phi_2, \Phi_3)\in\mathcal K^\bot$. \\ Let $$\tau_\e:=\e^{2}|\ln\e|^2+\sum_{i=2}^3e^{-2\sqrt{\omega_i}\frac{\rho_{i, \e}}{\e}}\left(\frac{\rho_{i, \e}}{\e}\right)^{-\frac{N-1}{2}}+|\beta|\eta_\e.$$  
\begin{proposition}\label{nonlin}
There exits $\e_0>0$ such that for every $\e\in(0, \e_0)$ there exists $\beta^*_\e>0$ such that for every $|\beta|\leq \beta^*_\e$, there exists a unique solution $(\Phi_1, \Phi_2, \Phi_3)\in\mathcal K^\bot $ of the system \eqref{DSnonlin1}. Furthermore, $$\|(\Phi_1, \Phi_2, \Phi_3)\|_X\lesssim \tau_\e$$ for some $\sigma>0$.
\end{proposition}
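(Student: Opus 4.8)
The plan is to run the standard contraction-mapping (perturbative) step of a Lyapunov--Schmidt reduction. First I would turn the second equation of \eqref{DSnonlin1} into a fixed point problem on $\mathcal K^\bot$. By Lemma \ref{inv}, $\Pi^\bot\mathcal L$ is injective on $\mathcal K^\bot$ with $\norm{\Pi^\bot\mathcal L(\Phi)}_{(L^2(\RN))^3}\geq c\norm{\Phi}_X$. Moreover $\mathcal L$ differs from the diagonal operator $\mathrm{diag}(-\Delta+V,\,-\Delta+W_2(\e\,\cdot),\,-\Delta+W_3(\e\,\cdot))$ by a compact operator, since all the extra terms appearing in \eqref{lin1}--\eqref{lin3} are multiplications by functions that tend to $0$ as $|x|\to\infty$ (because $\Upsilon$, $\varphi_1$ and the bumps $\Ui$ do), while the diagonal operator itself is invertible from $X$ onto $(L^2(\RN))^3$ (the first block by $\mathbf{(V_2)}$, the other two because $\inf W_i>0$ and $\e$ is small). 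Hence $\mathcal L\colon X\to(L^2(\RN))^3$ is Fredholm of index zero, so the a priori bound of Lemma \ref{inv} upgrades to the statement that $\Pi^\bot\mathcal L\colon\mathcal K^\bot\to\mathcal K^\bot$ is an isomorphism with inverse of norm at most $1/c$. The second equation of \eqref{DSnonlin1} is then equivalent to $\Phi=\mathcal T(\Phi):=(\Pi^\bot\mathcal L)^{-1}\Pi^\bot(\mathcal E+\mathcal N(\Phi))$ on $\mathcal K^\bot$.

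Then I would estimate the two ingredients. By Lemma \ref{errore}, $\norm{\mathcal E}_{(L^2(\RN))^3}\lesssim\tau_\e$; indeed $\rho_{i,\e}\in\mathcal D^j_{i,\e}$ gives $\rho_{i,\e}^2\lesssim\e^2|\ln\e|^2$, and the $\eta_\e$-term is precisely the one retained in $\tau_\e$. For the nonlinearity, inspecting the explicit expressions for $\mathcal N_1,\mathcal N_2,\mathcal N_3$ and using the embedding $X\hookrightarrow L^\infty(\RN)$ (valid for $N=2,3$ because $H^2\hookrightarrow L^\infty$) together with the uniform $L^\infty$ bounds on $\Upsilon+\varphi_1$, $\Uae$ and $\Ube$, one obtains on $\{\norm{\Phi}_X\leq 1\}$ the estimates $\norm{\mathcal N(\Phi)}_{(L^2(\RN))^3}\lesssim\norm{\Phi}_X^2+|\beta|\norm{\Phi}_X$ and $\norm{\mathcal N(\Phi)-\mathcal N(\Psi)}_{(L^2(\RN))^3}\lesssim(\norm{\Phi}_X+\norm{\Psi}_X+|\beta|)\norm{\Phi-\Psi}_X$, where the $|\beta|$-terms arise exactly from the genuinely $\Phi$-linear pieces $-\beta a_{23}(\Uae+\varphi_2)\Phi_3$ and $-\beta a_{32}(\Uae+\varphi_2)\Phi_3$ of $\mathcal N_2,\mathcal N_3$ (every other term is at least quadratic in $\Phi$).

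Combining these, on the ball $\mathcal B:=\{\Phi\in\mathcal K^\bot:\norm{\Phi}_X\leq R\tau_\e\}$ one has $\norm{\mathcal T(\Phi)}_X\lesssim\tau_\e+R^2\tau_\e^2+|\beta|R\tau_\e$ and $\norm{\mathcal T(\Phi)-\mathcal T(\Psi)}_X\lesssim(R\tau_\e+|\beta|)\norm{\Phi-\Psi}_X$. Choosing $R$ large (but fixed), then $\e_0$ small so that $R\tau_\e$ is small for $\e\in(0,\e_0)$, and finally $\beta^*_\e>0$ small, the map $\mathcal T$ sends $\mathcal B$ into itself and is a $\tfrac12$-contraction there for every $|\beta|\leq\beta^*_\e$. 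The Banach fixed point theorem produces a unique $\Phi=(\Phi_1,\Phi_2,\Phi_3)\in\mathcal B$ with $\mathcal T(\Phi)=\Phi$, and $\norm{\Phi}_X\leq R\tau_\e\lesssim\tau_\e$. Uniqueness in the whole of $\mathcal K^\bot$ follows because any solution satisfies $\norm{\Phi}_X\leq\tfrac1c(\norm{\mathcal E}+\norm{\mathcal N(\Phi)})$, which upon reinsertion forces $\norm{\Phi}_X\lesssim\tau_\e$, hence membership in $\mathcal B$.

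I expect the only genuinely delicate step to be the verification of the two $\mathcal N$-estimates: one has to check term by term that each factor multiplying $\Phi$ in $\mathcal N_1,\mathcal N_2,\mathcal N_3$ (one of $\Upsilon+\varphi_1$, $\Uae+\varphi_2$, $\Ube+\varphi_3$, or a further copy of $\Phi$) can be controlled in $L^2$ using only the $L^\infty$ bound on $\Phi$ coming from $X\hookrightarrow L^\infty$ — which is where the restriction $N\leq3$ enters — and that the only contributions which are linear in $\Phi$ without an extra $\Phi$-factor are the ones proportional to $\beta$. This last point is exactly why the smallness threshold $\beta^*_\e$ has to be imposed (to absorb those $\beta$-linear terms into the contraction), and it also makes it clear that $\beta^*_\e$ can be chosen before, and independently of, the fixed point $\Phi$ one is constructing.
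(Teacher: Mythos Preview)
Your proposal is correct and follows essentially the same approach as the paper: a standard contraction-mapping argument on the ball $\{\|\Phi\|_X\leq R\tau_\e\}$, using Lemma~\ref{inv} for the linear invertibility, Lemma~\ref{errore} for the error size, and the observation that the only linear-in-$\Phi$ pieces of $\mathcal N$ are the $\beta$-terms, which forces the smallness of $|\beta|$. The paper's own proof is in fact just a sketch that refers to \cite{PPVV} and highlights precisely this last point; your write-up is more detailed (you make the Fredholm step and the global uniqueness explicit), but the strategy is identical.
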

\begin{proof}
The proof of this proposition can be made as in \cite{PPVV} (see Proposition 3.4) by applying a standard contraction mapping argument. We only underline the reason why we have to choose $\beta$ sufficiently small.\\ Indeed, we let the ball 
$$B_\e:=\{(\Phi_1, \Phi_2, \Phi_3)\in X\,;\, \|(\Phi_1, \Phi_2, \Phi_3)\|_X \leq R \tau_\e \}$$ where $R$ is a suitable positive constant to be chosen.\\ It is easy to see that
$$\begin{aligned}\|\mathcal N_1(\Phi_1, \Phi_2, \Phi_3)\|_{L^2(\mathbb R^N)}\lesssim \|(\Phi_1, \Phi_2, \Phi_3)\|_X^2\end{aligned}$$  while
$$\begin{aligned}\|\mathcal N_2(\Phi_1, \Phi_2, \Phi_3)\|_{L^2(\mathbb R^N)}&\lesssim \|(\Phi_1, \Phi_2, \Phi_3)\|_X^2+|\beta|\|\Phi_3\|_{L^2(\mathbb R^N)}\\
&\lesssim \|(\Phi_1, \Phi_2, \Phi_3)\|_X^2+|\beta|\|(\Phi_1, \Phi_2, \Phi_3)\|_X\end{aligned}$$ and hence we need $|\beta|$ sufficiently small in order to get a contraction.
\end{proof}
\section{The reduced problem}\label{teorema}
We want to find $\rho_{2, \e}$ and $\rho_{3, \e}$  such that $\mathfrak c_2=\mathfrak c_3=0$. In order to do so, we have to consider the second and the third equation in \eqref{DSnonlin} and we have to multiply by $\mathcal Z_\e$ and $\mathcal Y_\e$ respectively. \\ We fix $(\Phi_1, \Phi_2, \Phi_3)\in\mathcal K^\bot$  given by Proposition \ref{nonlin}.
\begin{lemma}\label{prolinN}
It holds
$$\left|\left(\mathcal N_2(\Phi_1, \Phi_2,  \Phi_3)-\mathcal L_2(\Phi_1, \Phi_2,  \Phi_3), \mathcal Z_\e\right)_{L^2(\RN)}\right|\lesssim |\beta|\tau_\e $$ and 
$$\left|\left(\mathcal N_3(\Phi_1, \Phi_2,  \Phi_3)-\mathcal L_3(\Phi_1, \Phi_2,  \Phi_3), \mathcal Y_\e\right)_{L^2(\RN)}\right|\lesssim |\beta|\tau_\e$$
\end{lemma}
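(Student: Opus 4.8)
The two bounds are symmetric (exchange the indices $2$ and $3$, $\Uae\leftrightarrow\Ube$, $\partial_1\leftrightarrow\partial_2$, and use \eqref{Ye} in place of \eqref{Ze}), so I will only describe the estimate for the pairing with $\mathcal Z_\e$. Using \eqref{lin2}, the definition of $\mathcal N_2$, and $\omega_2(\e x)=W_2(\e x)-a_{21}\Upsilon(\e x)$, I would write $\mathcal N_2(\Phi)-\mathcal L_2(\Phi)=A+B+C+D$ with
\[
A=\Delta\Phi_2-\big(W_2(\e x)-3\mu_2(\Uae+\varphi_2)^2-a_{21}(\Upsilon+\varphi_1)(\e x)\big)\Phi_2,\qquad B=a_{21}(\Uae+\varphi_2)\Phi_1(\e x),
\]
\[
C=3\mu_2(\Uae+\varphi_2)\Phi_2^2+\mu_2\Phi_2^3+a_{21}\Phi_1(\e x)\Phi_2,\qquad D=\beta a_{23}\big(\Phi_2\Phi_3-(\Uae+\varphi_2)\Phi_3+(\Ube+\varphi_3)\Phi_2\big),
\]
and bound each of $(A,\mathcal Z_\e)_{L^2},\dots,(D,\mathcal Z_\e)_{L^2}$ separately. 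Throughout I use $\|(\Phi_1,\Phi_2,\Phi_3)\|_X\lesssim\tau_\e$ (Proposition \ref{nonlin}), the embeddings $H^2(\RN)\hookrightarrow L^\infty(\RN)$ and $H^1(\RN)\hookrightarrow L^3(\RN)$ (valid since $N=2,3$), Lemmas \ref{hatvarphi}, \ref{esistphii}, Remarks \ref{stimaphi1}, \ref{decay}, and the exponential decay of $\Uae,\Ube$ and their derivatives (Remark \ref{rema1}).

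The heart of the matter is the block $A$. Integrating the Laplacian by parts and inserting $\Delta\mathcal Z_\e$ from \eqref{Ze}, the terms $\omega_2\mathcal Z_\e$ and $-3\mu_2(\Uae)^2\mathcal Z_\e$ nearly cancel, leaving
\[
(A,\mathcal Z_\e)_{L^2}=\int_{\RN}\Phi_2\big(\omega_2-\omega_2(\e x)+a_{21}\varphi_1(\e x)\big)\mathcal Z_\e+3\mu_2\int_{\RN}\Phi_2\Big[(\Uae+\varphi_2)^2\mathcal Z_\e-\big((\Uap)^2\partial_1\Uap-(\Uam)^2\partial_1\Uam\big)\Big].
\]
In the last integral the bracket consists only of products of bubbles centred at $+P_2/\e$ and $-P_2/\e$ together with $\varphi_2$-corrections, so by Lemma \ref{ACR}, Lemma \ref{esistphii} and Remark \ref{decay} its $L^2$-norm is $o(1)$, whence that term is $o(1)\|\Phi_2\|_{L^2}=o(\tau_\e)$. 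In the first integral I use $(\Phi_1,\Phi_2,\Phi_3)\in\mathcal K^\bot$, i.e.\ $\int_{\RN}\Phi_2\mathcal Z_\e=0$, to replace the multiplier by its deviation from the constant $c_\e:=\omega_2-\omega_2(P_2)+a_{21}\varphi_1(P_2)$ (its frozen value at the bubble centres, the same at $+P_2$ and $-P_2$ because $\omega_2$ and $\varphi_1$ are even); since $\omega_2\in C^3$ with $\nabla\omega_2(0)=0$, $\nabla\varphi_1(0)=0$ and $\rho_{2,\e}\in\mathcal D_{2, \e}^j$ forces $\rho_{2,\e}\sim\e|\ln\e|$, a Taylor expansion combined with Remark \ref{stimaphi1} yields
\[
\big\|\big(\omega_2-\omega_2(\e x)+a_{21}\varphi_1(\e x)-c_\e\big)\mathcal Z_\e\big\|_{L^2}\lesssim\e\rho_{2,\e}+\rho_{2,\e}^2\lesssim\e^2|\ln\e|,
\]
so the first integral is $\lesssim\e^2|\ln\e|\,\|\Phi_2\|_{L^2}=o(\tau_\e)$, and therefore $|(A,\mathcal Z_\e)_{L^2}|=o(\tau_\e)$. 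This step — making sure the residual multiplier really is small on $\supp\mathcal Z_\e$ — is the one that requires the orthogonality condition and the vanishing of $\nabla\omega_2$ and $\nabla\varphi_1$ at the origin, and it is the main obstacle.

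The remaining blocks are routine. For $B$, split $\Phi_1(\e x)=\Phi_1(0)+\big(\Phi_1(\e x)-\Phi_1(0)\big)$: the first piece is $a_{21}\Phi_1(0)\int_{\RN}(\Uae+\varphi_2)\mathcal Z_\e$, which reduces to interaction integrals controlled by Lemma \ref{ACR} (using $\int\Uap\partial_1\Uap=\int\Uam\partial_1\Uam=0$ and that $\psi$ from Remark \ref{decay} is radial while $\partial_1\Ua$ is odd), hence is $\lesssim\|\Phi_1\|_{L^\infty}\tau_\e\lesssim\tau_\e^2$; the second piece is controlled by $|\Phi_1(\e x)-\Phi_1(0)|\lesssim|\e x|^{2-N/2}\|\Phi_1\|_{H^2}$ and the decay of $\Uae,\varphi_2,\mathcal Z_\e$, giving $\lesssim\rho_{2,\e}^{\,2-N/2}\tau_\e=o(\tau_\e)$. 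For $C$, since $\mathcal Z_\e\in L^\infty\cap L^2$ with bounded norms, one has $|\int 3\mu_2(\Uae+\varphi_2)\Phi_2^2\mathcal Z_\e|\lesssim\|\Phi_2\|_{L^2}^2$, $|\int\mu_2\Phi_2^3\mathcal Z_\e|\lesssim\|\Phi_2\|_{H^1}^3$ and $|a_{21}\int\Phi_1(\e x)\Phi_2\mathcal Z_\e|\lesssim\|\Phi_1\|_{L^\infty}\|\Phi_2\|_{L^2}$, all $\lesssim\tau_\e^2$. Finally, for $D$, using $\|(\Uae+\varphi_2)\mathcal Z_\e\|_{L^2}+\|(\Ube+\varphi_3)\mathcal Z_\e\|_{L^2}\lesssim1$,
\[
|(D,\mathcal Z_\e)_{L^2}|\lesssim|\beta|\Big(\|\Phi_2\|_{L^2}\|\Phi_3\|_{L^\infty}+\|\Phi_3\|_{L^2}+\|\Phi_2\|_{L^2}\Big)\lesssim|\beta|\,\tau_\e.
\]
Collecting the four blocks, the contributions of $A$, $B$, $C$ are of strictly lower order (exactly the quantities already shown to be negligible in \cite{PPVV}), while $D$, the genuinely new $\beta$-linear block, contributes $O(|\beta|\tau_\e)$; this gives the asserted bound, and the estimate for $(\mathcal N_3-\mathcal L_3,\mathcal Y_\e)_{L^2}$ follows verbatim with the replacements listed above.
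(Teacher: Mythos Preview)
Your argument is correct and follows essentially the same route as the paper: integrate by parts against $\mathcal Z_\e$, use \eqref{Ze} to cancel the leading terms, and then estimate each residual piece with Lemma \ref{ACR}, Lemmas \ref{hatvarphi}--\ref{esistphii}, and the bound $\|\Phi\|_X\lesssim\tau_\e$ from Proposition \ref{nonlin}. Two small remarks. First, the orthogonality trick you invoke in block $A$ (subtracting the constant $c_\e$) is not needed: the paper simply bounds $\|(\omega_2(\e x)-\omega_2)\mathcal Z_\e\|_{L^2}\lesssim\rho_{2,\e}^2$ and $\|\varphi_1\|_{L^\infty}\lesssim\e^{N/2}$ directly, which already gives $\lesssim\tau_\e\|\Phi_2\|_{L^2}\lesssim\tau_\e^2$. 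Second, in block $B$ your H\"older estimate $|\Phi_1(\e x)-\Phi_1(0)|\lesssim|\e x|^{2-N/2}\|\Phi_1\|_{H^2}$ is only valid for $N=3$; in dimension $N=2$ the embedding $H^2\hookrightarrow C^{0,\alpha}$ holds for every $\alpha<1$ but not for $\alpha=1$, so the paper uses $|\e x|^{1/2}$ when $N=3$ and $|\e x|^\alpha$ when $N=2$ --- this does not affect the outcome since any positive power of $\rho_{2,\e}$ suffices.
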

\begin{proof}
We will prove only the first result (reasoning similarly we can obtain the second).\\
By using (\ref{lin2}) and (\ref{Ze}) we get that
\begin{equation*}
    \begin{aligned}
        \int_{\RN} \mathcal L_2(\Phi_1, \Phi_2, \Phi_3) \mathcal Z_\e \, dx &= \int_{\RN} \nabla \Phi_2 \nabla \mathcal Z_\e +W_2(\varepsilon x) \Phi_2 \mathcal Z_\e \, dx -3\mu_2 \int_{\RN} (\Uae+\varphi_2)^2 \mathcal Z_\e \Phi_2\, dx \\
        &- a_{21} \int_{\RN}(\Uae+\varphi_2)\Phi_1(\e x) \mathcal Z_\e \,dx -\beta a_{23}\int_{\RN} (\Ube+\varphi_3)\Phi_2 \mathcal Z_\e \,dx\\ &-a_{21}\int_{\RN} (\Upsilon+\varphi_1)(\e x)\Phi_2 \mathcal Z_\e\, dx \\
        &=\int_{\RN}(W_2(\e x)-\omega_2)\Phi_2 \mathcal Z_\e \, dx \\
        &+3\mu_2\int_{\RN} \left((\Uap)^2\partial_1 \Uap-(\Uam)^2\partial_1 \Uam\right)\Phi_2\, dx\\
        &-3\mu_2 \int_{\RN} (\Uae+\varphi_2)^2 \mathcal Z_\e \Phi_2\, dx - a_{21} \int_{\RN}(\Uae+\varphi_2)\Phi_1(\e x) \mathcal Z_\e \,dx\\
        & -\beta a_{23}\int_{\RN} (\Ube+\varphi_3)\Phi_2 \mathcal Z_\e \,dx-a_{21}\int_{\RN} (\Upsilon+\varphi_1)(\e x)\Phi_2 \mathcal Z_\e\, dx \\
        &= \int_{\RN}(\omega_2(\e x)-\omega_2)\Phi_2 \mathcal Z_\e \, dx+3\mu_2 \int_{\RN} (\Uap)^2\partial_1 \Uam \Phi_2 \, dx\\
        &-3\mu_2\int_{\RN}(\Uam)^2\partial_1 \Uap \Phi_2 \, dx -6\mu_2 \int_{\RN} \Uap \Uam \mathcal Z_\e \Phi_2 \, dx\\
        &-3\mu_2 \int_{\RN}\varphi_2^2 \Phi_2 \mathcal Z_\e \, dx -6\mu_2\int_{\RN}\Uae \varphi_2 \Phi_2 \mathcal Z_\e \, dx
        -a_{21}\int_{\RN}\varphi_1(\e x)\Phi_2 \mathcal Z_\e \, dx\\
        &-a_{21}\int_{\RN}\Uae\Phi_1(\e x) \mathcal Z_\e \, dx-a_{21}\int_{\RN}\varphi_2 \Phi_1(\e x) \mathcal Z_\e \, dx\\
        & -\beta a_{23}\int_{\RN} (\Ube+\varphi_3)\Phi_2 \mathcal Z_\e \,dx.
    \end{aligned}
\end{equation*}
Let us study the right hand side. First of all, we have that
$$\begin{aligned}\int_{\RN} (\omega_2(\e x )-\omega_2)\Phi_2 \mathcal Z_\e \, dx &\lesssim \left(\int_{\RN} |\omega_2(\e x )-\omega_2|^2 \mathcal Z_\e^2 \, dx\right)^{1/2} \|\Phi_2\|_{L^2(\RN)}\\ & \lesssim \rho^2_{2,\e}\|\Phi_2\|_{L^2(\RN}\lesssim \tau_\e^2.\end{aligned} $$
By applying Lemma \ref{ACR}, it results 
\begin{equation*}
    \begin{aligned}
        \int_{\RN} (\Uap)^2\partial_1 \Uam \Phi_2 \, dx&\lesssim \left(\int_{\RN} (\Uap)^4\Uam^2 \, dx\right)^{1/2} \|\Phi_2\|_{L^2(\RN)}  \lesssim \\
        &\lesssim e^{-2\sqrt{\omega_2}\frac{\rho_{2, \e}}{\e}}\left(\frac{\rho_{2, \e}}{\e}\right)^{-\frac{N-1}{2}} \|\Phi_2\|_{L^2(\RN)}\lesssim \tau_\e^2.
    \end{aligned}
\end{equation*}
Similarly, we have that
$$\int_{\RN}(\Uam)^2\partial_1 \Uap \Phi_2 \, dx\lesssim \tau_\e^2$$ and
$$\int_{\RN}\Uap \Uam\mathcal Z_\e \Phi_2 \, dx\lesssim \left(\int_{\RN}(\Uap)^4 (\Uam)^2 \, dx\right)^{1/2} \|\Phi_2\|_{L^2(\RN)}\lesssim \tau_\e^2. $$
In addition, Lemma \ref{hatvarphi}, \ref{esistphii} yield
$$\int_{\RN}\varphi_2^2 \Phi_2 \mathcal Z_\e \, dx\lesssim \|\varphi_2\|^2_{L^2(\RN)} \|\Phi_2\|_{L^2(\RN)}\lesssim \e^N \|\Phi_2\|_{L^2(\RN)}\lesssim \e^{N}\tau_\e.$$
$$\int_{\RN}\Uae \varphi_2 \Phi_2 \mathcal Z_\e \, dx \lesssim \|\varphi_2\|_{L^2(\RN)} \|\Phi_2\|_{L^2(\RN)}\lesssim \e^{\frac{N}{2}}\tau_\e$$ and
$$\int_{\RN}\varphi_1(\e x)\Phi_2 Z_\e \, dx\lesssim \|\varphi_1\|_{L^\infty(\RN)} \|\Phi_2\|_{L^2(\RN)}\lesssim \e^{\frac{N}{2}}\tau_\e.$$
Now
$$\int_{\RN}\Uae\Phi_1(\e x) \mathcal Z_\e \, dx=\int_{\RN}\Uae(\Phi_1(\e x)-\Phi_1(0)) \mathcal Z_\e \, dx+ \int_{\RN}\Uae\Phi_1(0) \mathcal Z_\e \, dx.$$
We remark that $\Phi_1\in H^2_V(\RN)$ and hence $\Phi_1\in C^{0, \frac 12}(\mathbb R^3)$ or $\Phi_1\in C^{0, \alpha}(\mathbb R^2)$ for any $\alpha\in (0, 1)$. Hence
$$|\Phi_1(\e x)-\Phi_1(0)|\lesssim \|\Phi_1\|_{H^2_V} \e^{\frac 12}|x|^{\frac 12}\quad \mbox{in}\,\,\mathbb R^3$$ and 
$$|\Phi_1(\e x)-\Phi_1(0)|\lesssim \|\Phi_1\|_{H^2_V} \e^{\alpha}|x|^{\alpha}\quad \mbox{in}\,\,\mathbb R^2.$$ Then 
\begin{equation*}
    \begin{aligned}
        \int_{\RN}\Uae(\Phi_1(\e x)-\Phi_1(0)) \mathcal Z_\e \, dx&\lesssim\|\Phi_1\|_{L^2(\RN)}\left\{\begin{aligned} & \int_{\RN}\e^{\frac 12}|x|^{\frac{1}{2}} (\Uae)^2 \, dx\quad \mbox{if}\,\, N=3\\
& \int_{\RN}\e^{\alpha}|x|^{\alpha} (\Uae)^2 \, dx\quad \mbox{if}\,\, N=2\end{aligned}\right.\\
&\lesssim \|\Phi_1\|_{L^2(\RN)}\left\{\begin{aligned} & \int_{\RN}\e^{\frac 12}\left|x+\frac{P_{2,\e}}{\e}\right|^{\frac{1}{2}} (\Uae)^2 \, dx\quad \mbox{if}\,\, N=3\\
& \int_{\RN}\e^{\alpha}\left|x+\frac{P_{2,\e}}{\e}\right|^{\alpha} (\Uae)^2 \, dx\quad \mbox{if}\,\, N=2\end{aligned}\right.\\
&\lesssim \|\Phi_1\|_{L^2(\RN)}\left\{\begin{aligned} & \rho^{\frac{1}{2}}_{2,\e}\quad \mbox{if}\,\, N=3\\
&  \rho^{\alpha}_{2,\e}  \quad \mbox{if}\,\, N=2  \end{aligned}\right.\\
&\lesssim \left\{\begin{aligned} & \e^{\frac 1 2 -\sigma}\tau_\e\quad &\mbox{if}\,\, N=3\\
&  \e^{\alpha-\sigma}\tau_\e  \quad &\mbox{if}\,\, N=2  \end{aligned}\right.\\
\end{aligned}
\end{equation*}
and, applying Lemma \ref{ACR} and the fact that $\int_{\RN} \mathcal U_{2, \pm P_2}\partial_1 \mathcal U_{2, \pm P_2}=0$
\begin{equation*}
    \begin{aligned}
        \int_{\RN}\Uae\Phi_1(0) \mathcal Z_\e \, dx&=-\Phi_1(0)\int_{\RN}(\Uap\partial_1\Uam-\Uam\partial_1\Uap)\, dx\\
        &\lesssim \|\Phi_1\|_{L^\infty(\RN)}\int_{\RN}|\Uap\partial_1\Uam-\Uam\partial_1\Uap|\, dx\\
        &\lesssim  \|\Phi_1\|_{H^2_V(\RN)} e^{-2\sqrt{\omega_2}\frac{\rho_{2, \e}}{\e}}\left(\frac{\rho_{2, \e}}{\e}\right)^{-\frac{N-1}{2}}\lesssim \tau_\e^2.
    \end{aligned}
\end{equation*}
Moreover
$$\left|\int_{\RN}\varphi_2 \Phi_1(\e x) \mathcal Z_\e \, dx\right|\lesssim \|\varphi_2\|_{L^2(\RN)} \|\Phi_1\|_{L^\infty(\RN)}\lesssim\e^{\frac{N}{2}}\|\Phi_1\|_{H^2_V(\RN)}\lesssim \e^{\frac N 2}\tau_\e.$$
At the end, reasoning as in Lemma \ref{errore}, we have $$\int_{\RN}(\mathcal U_{3, \e}+\varphi_3)\mathcal Z_\e \Phi_2\lesssim \|\Phi_2\|_{L^2(\RN)}\|\mathcal U_{3, \e}\mathcal U_{2, \e}\|_{L^2(\RN)}\lesssim\eta_\e\tau_\e.$$
Moreover
$$ \begin{aligned}
        \int_{\RN} \mathcal N_2(\Phi_1, \Phi_2, \Phi_3) \mathcal Z_\e \, dx &\lesssim \|(\Phi_1, \Phi_2, \Phi_3\|_X^2+|\beta|\int_{\RN}(\mathcal U_{2, \e}+\varphi_2)\mathcal Z_\e \Phi_3\\ &\lesssim \tau_\e^2+|\beta\|\Phi_3\|_{L^2(\RN)}\lesssim |\beta|\tau_\e.\end{aligned}$$\end{proof}
Now we have to compute the projection of the error $\mathcal E_2$ and $\mathcal E_3$.\\ To simplify the notation we let $$\mathfrak F(\rho_{2, \e}, \rho_{3, \e}):=\left\{\begin{aligned}
 &2\frac{\rho_{2, \e}}{|\rho_\e|}e^{-\sqrt{\omega_3}\frac{|\rho_\e|}{\e}}\left(\frac{|\rho_\e|}{\e}\right)^{-\frac{N-1}{2}},\quad &\mbox{if}\,\, \omega_3<4\omega_2\\
 &2\frac{\rho_{2, \e}}{|\rho_\e|}e^{-2\sqrt{\omega_2}\frac{|\rho_\e|}{\e}}\left(\frac{|\rho_\e|}{\e}\right)^{-(N-1)},\quad &\mbox{if}\,\, \omega_3>4\omega_2\\
&2\frac{\rho_{2,\e}}{|\rho_\e|}e^{-\sqrt{\omega_3}\frac{|\rho_\e|}{\e}},\quad &\mbox{if}\,\, \omega_3=4\omega_2,\,\mbox{and}\, N=2\\
&2\frac{\rho_{2, \e}}{|\rho_\e|}e^{-\sqrt{\omega_3}\frac{|\rho_\e|}{\e}}\left(\frac{|\rho_\e|}{\e}\right)^{-1}\ln\frac{|\rho_\e|}{\e},\quad &\mbox{if}\,\, \omega_3=4\omega_2,\,\mbox{and}\, N=3\end{aligned}\right.$$
and
$$\mathfrak G(\rho_{2, \e}, \rho_{3, \e}):=\left\{\begin{aligned}
 &2\frac{\rho_{3, \e}}{|\rho_\e|}e^{-\sqrt{\omega_2}\frac{|\rho_\e|}{\e}}\left(\frac{|\rho_\e|}{\e}\right)^{-\frac{N-1}{2}},\quad &\mbox{if}\,\, \omega_2<4\omega_3\\
 &2\frac{\rho_{3, \e}}{|\rho_\e|}e^{-2\sqrt{\omega_3}\frac{|\rho_\e|}{\e}}\left(\frac{|\rho_\e|}{\e}\right)^{-(N-1)},\quad &\mbox{if}\,\, \omega_2>4\omega_3\\
&2\frac{\rho_{3,\e}}{|\rho_\e|}e^{-\sqrt{\omega_2}\frac{|\rho_\e|}{\e}},\quad &\mbox{if}\,\, \omega_2=4\omega_3,\,\mbox{and}\, N=2\\
&2\frac{\rho_{3, \e}}{|\rho_\e|}e^{-\sqrt{\omega_2}\frac{|\rho_\e|}{\e}}\left(\frac{|\rho_\e|}{\e}\right)^{-1}\ln\frac{|\rho_\e|}{\e},\quad &\mbox{if}\,\, \omega_2=4\omega_3,\,\mbox{and}\, N=3\end{aligned}\right.$$

\begin{lemma}\label{proiezioni}
It holds
\begin{equation}\label{pro1}\int_{\RN}\mathcal E_2 \mathcal Z_\e\, dx=\left[-\partial^2_{11}\omega_2(0)\mathfrak b \e \rho_{2, \e}-2\mu_2 \mathfrak c e^{-2\sqrt{\omega_2}\frac{\rho_{2, \e}}{\e}}\left(\frac{\rho_{2, \e}}{\e}\right)^{-\frac{N-1}{2}}-\beta a_{23}\mathfrak d\mathfrak F(\rho_{2, \e}, \rho_{3, \e})\right](1+o(1))\end{equation} for some positive $\mathfrak b$, $\mathfrak c$ and $\mathfrak d$.
and
\begin{equation}\label{pro2}\int_{\RN}\mathcal E_3 \mathcal Y_\e\, dx=\left[-\partial_{22}^2\omega_3(0)\bar{\mathfrak  b}\e\rho_{3, \e}-2\mu_3\bar{ \mathfrak c }e^{-2\sqrt{\omega_3}\frac{\rho_{3, \e}}{\e}}\left(\frac{\rho_{3, \e}}{\e}\right)^{-\frac{N-1}{2}}-\beta a_{32}\bar{\mathfrak d}\mathfrak G(\rho_{2, \e}, \rho_{3, \e})\right](1+o(1))\end{equation} for some positive $\bar{\mathfrak b}$, $\bar{\mathfrak c}$ and $\bar{\mathfrak d}$.
\end{lemma}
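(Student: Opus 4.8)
The plan is to expand $\mathcal E_2$ according to \eqref{E2}, pair each summand with $\mathcal Z_\e=\partial_1\Uap-\partial_1\Uam$, integrate over $\RN$, and keep only the three contributions that are not $o$ of the right-hand side of \eqref{pro1}; formula \eqref{pro2} then follows verbatim with the roles of $2$ and $3$, of $\mathcal Z_\e$ and $\mathcal Y_\e$, and of $\partial_1$ and $\partial_2$ interchanged. Throughout I would use three facts: that near each of its two centres $\mathcal Z_\e$ agrees, up to an exponentially small error, with an odd profile ($-\partial_1\Uam$ near $P_2/\e$, $\partial_1\Uap$ near $-P_2/\e$) while $\Uae$ agrees with the even profile $\Ua(\cdot\mp P_2/\e)$; that $\Uae,\Ube,\mathcal Z_\e$ and their derivatives decay at the exponential rates $\sqrt{\omega_2},\sqrt{\omega_3},\sqrt{\omega_2}$ as in \eqref{eq:Udecay}; and the interaction estimates of Lemma \ref{ACR}. \emph{The potential term.} Since $\omega_2$ is even in every variable $\nabla\omega_2(0)=0$, so writing $x=\pm P_2/\e+y$ and $\e x=\pm\rho_{2, \e}e_1+\e y$ a Taylor expansion of $\omega_2$ at the origin gives, near the two centres, $\omega_2-\omega_2(\e x)=-\tfrac12\partial_{11}^2\omega_2(0)\rho_{2, \e}^2\mp\partial_{11}^2\omega_2(0)\rho_{2, \e}\e y_1+(\text{lower order})$, the mixed second derivatives dropping out by evenness against the functions at hand. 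The pieces even in $y$ pair with the essentially odd product $\Uae\mathcal Z_\e$ to leave only an exponentially small remainder, while the odd piece $\mp\partial_{11}^2\omega_2(0)\rho_{2, \e}\e y_1$ pairs with $\pm\Ua\partial_1\Ua$ and, after an integration by parts ($\int_{\RN}y_1\Ua\partial_1\Ua=-\tfrac12\int_{\RN}\Ua^2$), yields $-\partial_{11}^2\omega_2(0)\,\mathfrak b\,\e\rho_{2, \e}(1+o(1))$ with $\mathfrak b:=\int_{\RN}\Ua^2>0$; the companion term $(\omega_2-\omega_2(\e x))\varphi_2$ is $O(\rho_{2, \e}^2\e^{N/2})=o(\e\rho_{2, \e})$ by Remark \ref{decay}.

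\emph{The self-interaction term.} The summand $3\mu_2((\Uap)^2\Uam+(\Uam)^2\Uap)$ tested against $\mathcal Z_\e$ is a classical two-soliton interaction integral: using $3\mu_2\Ua^2\partial_1\Ua=\partial_1(\mu_2\Ua^3)=\partial_1(-\Delta\Ua+\omega_2\Ua)$ to integrate by parts, or directly Lemma \ref{ACR}, it equals $-2\mu_2\mathfrak c\,e^{-2\sqrt{\omega_2}\rho_{2, \e}/\e}(\rho_{2, \e}/\e)^{-(N-1)/2}(1+o(1))$ with an explicit $\mathfrak c>0$ read off from the asymptotics \eqref{eq:Udecay} of $\Ua$ (the sign reflecting the repulsion between two equal-sign bumps). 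The piece $6\mu_2\Uap\Uam\varphi_2$ is of smaller order because $\|\varphi_2\|_\infty\lesssim\e^{N/2}$ and $\Uap\Uam$ is already of interaction size, and $3\mu_2\Uae\varphi_2^2+\mu_2\varphi_2^3+a_{21}\varphi_1(\e\cdot)\varphi_2$ is $O(\e^N)=o(\e\rho_{2, \e})$ by Lemmas \ref{hatvarphi} and \ref{esistphii}. For $a_{21}\Uae(\varphi_1(\e\cdot)-\varphi_1(0))\mathcal Z_\e$ I would use Remark \ref{stimaphi1}: the part of $\varphi_1(\e\cdot)-\varphi_1(0)$ that is constant in $y$ pairs with the essentially odd $\Uae\mathcal Z_\e$ to give an exponentially small remainder, while expanding to first order near $\pm P_2/\e$ bounds the rest by $\lesssim\e^{1+N/m}\rho_{2, \e}^{1-N/m}=o(\e\rho_{2, \e})$.

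\emph{The coupling term.} This is the main work. In $\beta a_{23}(\Uae+\varphi_2)(\Ube+\varphi_3)\mathcal Z_\e$ the pieces containing $\varphi_2$ or $\varphi_3$ are $o(|\beta|\eta_\e)$ by Lemma \ref{esistphii} and Remark \ref{decay}, so the leading part is $\beta a_{23}\int_{\RN}\Uae\Ube\mathcal Z_\e$, which I would estimate by splitting $\RN$ into neighbourhoods of $\pm P_2/\e$, of $\pm P_3/\e$, and the rest (negligible by exponential decay). Near $\pm P_2/\e$ the product $\Uae\mathcal Z_\e$ is essentially $\mp\Ua\partial_1\Ua$ while $\Ube$ is smooth there of size $e^{-\sqrt{\omega_3}|\rho_\e|/\e}(|\rho_\e|/\e)^{-(N-1)/2}$; since $\int_{\RN}\Ua\partial_1\Ua=0$ one Taylor-expands $\Ube$ to first order, and $\partial_1\Ube$ at $\pm P_2/\e$ equals $\mp\sqrt{\omega_3}\tfrac{\rho_{2, \e}}{|\rho_\e|}\Ube(\pm P_2/\e)(1+o(1))$ by \eqref{eq:Udecay}, giving a contribution of order $\tfrac{\rho_{2, \e}}{|\rho_\e|}e^{-\sqrt{\omega_3}|\rho_\e|/\e}(|\rho_\e|/\e)^{-(N-1)/2}$. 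Near $\pm P_3/\e$ both $\Uae$ and $\mathcal Z_\e$ are of size $e^{-\sqrt{\omega_2}|\rho_\e|/\e}$ and, again expanding the slowly varying factors and using $\int_{\RN}\Ub\partial_1\Ub=0$, one gets a contribution of order $\tfrac{\rho_{2, \e}}{|\rho_\e|}e^{-2\sqrt{\omega_2}|\rho_\e|/\e}(|\rho_\e|/\e)^{-(N-1)}$. Comparing the two exponents gives exactly the four cases of $\mathfrak F(\rho_{2, \e},\rho_{3, \e})$ according as $\omega_3<4\omega_2$, $\omega_3>4\omega_2$, or $\omega_3=4\omega_2$, the threshold $\omega_3=4\omega_2$ producing the pure exponential for $N=2$ and the $\log$ correction for $N=3$; collecting constants into $\mathfrak d>0$ yields $-\beta a_{23}\mathfrak d\,\mathfrak F(\rho_{2, \e},\rho_{3, \e})(1+o(1))$. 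Summing the three contributions and the negligible remainders gives \eqref{pro1}, and \eqref{pro2} follows by the same argument with indices $2\leftrightarrow 3$ and $\mathfrak G$ in place of $\mathfrak F$.

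The main obstacle is the coupling integral $\int_{\RN}\Uae\Ube\mathcal Z_\e$: it is a genuinely four-bump interaction whose leading order, size, and sign hinge on the competition between the tail of $\Ube$ at the $P_2$-bumps (rate $e^{-\sqrt{\omega_3}|\rho_\e|/\e}$) and the product of the tails of $\Uae$ and $\mathcal Z_\e$ at the $P_3$-bumps (rate $e^{-2\sqrt{\omega_2}|\rho_\e|/\e}$, with the extra polynomial loss carried by $\mathcal Z_\e$). Pinning down the exact leading constant $\mathfrak d$, handling the threshold $\omega_3=4\omega_2$ and its $N=3$ logarithm, and checking that the first-order Taylor remainders are genuinely subleading, are the delicate points; everything else reduces to the standard Lyapunov–Schmidt bookkeeping already used in \cite{PPVV}.
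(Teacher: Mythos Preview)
Your proposal is correct and tracks the paper's proof closely for the potential term, the self-interaction term, and all the small remainders (the paper cites \cite{PPVV} for the first two and obtains the same constant $\mathfrak b=-\int y_1^2\Ua\,\Ua'/|y|=\tfrac12\int\Ua^2$, which differs from yours only by a harmless factor absorbed in ``some positive $\mathfrak b$''). The one place where your route diverges is the coupling integral $\int_{\RN}\Uae\Ube\mathcal Z_\e$: instead of localizing near the four bump centres and Taylor-expanding, the paper drops the three-centre cross terms, writes $\Uap\partial_1\Uap=\tfrac12\partial_1(\Uap)^2$, changes variables, and reduces everything to four integrals of the form $\int_{\RN}\partial_1(\Ua)^2(y)\,\Ub(y\pm\zeta)\,dy$ and $\int_{\RN}\partial_1(\Ua)^2(y)\,\Ub(y\pm\xi)\,dy$, which are evaluated in one shot by Remark \ref{ossdecay} (the two-species extension of Lemma \ref{PV}). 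This buys a cleaner and fully rigorous treatment of all four regimes at once, including the threshold $\omega_3=4\omega_2$ with its $N=3$ logarithm, which your localization heuristic would not produce directly (the $\log$ comes from the overlap region, not from either centre). Your approach is fine off the threshold, but to make it complete you would end up re-proving Lemma \ref{ACR}/Remark \ref{ossdecay}; the paper simply invokes it.
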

\begin{proof}
By recalling the definition of $\mathcal E_2$ we get
$$\begin{aligned}\int_{\RN}\mathcal E_2 \mathcal Z_\e\, dx&:=3\mu_2 \int_{\RN}\left((\Uap)^2\Uam+(\Uam)^2\Uap\right)\mathcal Z_\e\, dx\\
&+6\mu_2 \int_{\RN}\Uap\Uam\varphi_2\mathcal Z_\e\, dx+3\mu_2\int_{\RN}\Uae\varphi_2^2\mathcal Z_\e\, dx+\mu_2\int_{\RN}\varphi_2^3\mathcal Z_\e\, dx\\
&+\int_{\RN}(\omega_2-\omega_2(\e x))(\Uae+\varphi_2)\mathcal Z_\e\, dx+a_{21}\int_{\RN}\Uae(\varphi_1(\e x)-\varphi_1(0))\mathcal Z_\e\, dx\\&+a_{21}\int_{\RN}\varphi_1(\e x)\varphi_2\mathcal Z_\e\, dx+\beta a_{23}\int_{\RN}(\Uae+\varphi_2)(\Ube+\varphi_3)\mathcal Z_\e\, dx.\end{aligned}$$
As done in Lemma 4.2 of \cite{PPVV} we have that
$$\int_{\RN}(\omega_2-\omega_2(\e x))(\Uae+\varphi_2)\mathcal Z_\e\, dx =-\partial^2_{11}\omega_2(0)\mathfrak b \e \rho_\e+o(\e\rho_\e)$$ where $$\mathfrak b:=-\int_{\RN}y_1^2 \Ua(y)\frac{\Ua'(y)}{|y|}\, dy>0.$$ While
$$3\mu_2 \int_{\RN}\left((\Uap)^2\Uam+(\Uam)^2\Uap\right)\mathcal Z_\e\, dx=-2\mu_2 \mathfrak c e^{-2\sqrt{\omega_2}\frac{\rho_{2, \e}}{\e}}\left(\frac{\rho_{2, \e}}{\e}\right)^{-\frac{N-1}{2}}(1+o(1)).$$
Moreover $$\left|\int_{\RN}\Uae(\varphi_1(\e x)-\varphi_1(0))\mathcal Z_\e\, dx\right|\lesssim\int_{\RN} |\varphi_1(\e x)-\varphi_1(0)|(\Uap)^2\, dx \lesssim \e^2\int_{\RN}|x|^{2-\frac N m}(\Uap)^2\, dx=o(\e \rho_\e^2)$$ and
$$
\int_{\RN}\Uap\Uam\varphi_2\mathcal Z_\e\, dx=o\left(e^{-2\sqrt{\omega_2}\frac{\rho_{2, \e}}{\e}}\left(\frac{\rho_{2, \e}}{\e}\right)^{-\frac{N-1}{2}}\right).$$ Finally,
$$\int_{\RN}\varphi_2\varphi_1(\e x)\mathcal Z_\e\lesssim \e^N,\quad \int_{\RN}
 \Uae\varphi_2^2\mathcal Z_\e\lesssim \e^N,\quad \int_{\RN}\varphi_2^3\mathcal Z_\e\lesssim \e^N.$$ 
 We need to estimate the interaction between the second and the third equation.
 $$\begin{aligned}\int_{\RN}\Uae\Ube\mathcal Z_\e\, dx&=(1+o(1))\left(\int_{\RN}\Uap\Ubp \partial_1 \Uap\, dx +\int_{\RN}\Uap\Ubm\partial_1\Uap\, dx\right)\\
 &-(1+o(1))\left(\int_{\RN}\Uam\Ubp \partial_1 \Uam\, dx +\int_{\RN}\Uam\Ubm\partial_1\Uam\, dx\right)\\
 &=\frac 12 (1+o(1))\int_{\RN}\partial_1 (\Ua)^2(y) \Ub\left(y+\frac{P_3-P_2}{\e}\right)\, dy\\
 &+\frac 12 (1+o(1))\int_{\RN}\partial_1 (\Ua)^2(y) \Ub\left(y-\frac{P_3+P_2}{\e}\right)\, dy\\
 &-\frac 12 (1+o(1))\int_{\RN}\partial_1 (\Ua)^2(y) \Ub\left(y+\frac{P_3+P_2}{\e}\right)\, dy\\
 &-\frac 12(1+o(1))\int_{\RN}\partial_1 (\Ua)^2(y) \Ub\left(y+\frac{P_2-P_3}{\e}\right)\, dy
 \end{aligned}$$
 We have that
 $$(\Ua)^2\sim e^{-2\sqrt{\omega_2}|x|}|x|^{-(N-1)},\quad \Ub\sim e^{-\sqrt{\omega_3}|x|}|x|^{-\frac{N-1}{2}}$$
 then, letting $\zeta:=\frac{P_3-P_2}{\e}$, we get by Remark \ref{ossdecay} that
 $$\int_{\RN}\partial_1 (\Ua)^2(y) \Ub\left(y+\zeta\right)\, dy\sim\left\{\begin{aligned}
 &\frac{\zeta_1}{|\zeta|}e^{-\sqrt{\omega_3}|\zeta|}|\zeta|^{-\frac{N-1}{2}},\quad &\mbox{if}\,\, \omega_3<4\omega_2\\
 &\frac{\zeta_1}{|\zeta|}e^{-2\sqrt{\omega_2}|\zeta|}|\zeta|^{-(N-1)},\quad &\mbox{if}\,\, \omega_3>4\omega_2\\
&\frac{\zeta_1}{|\zeta|}e^{-\sqrt{\omega_3}|\zeta|},\quad &\mbox{if}\,\, \omega_3=4\omega_2,\,\mbox{and}\, N=2\\
&\frac{\zeta_1}{|\zeta|}e^{-\sqrt{\omega_3}|\zeta|}|\zeta|^{-1}\ln|\zeta|,\quad &\mbox{if}\,\, \omega_3=4\omega_2,\,\mbox{and}\, N=3\end{aligned}\right.$$
Moreover
$$\int_{\RN}\partial_1 (\Ua)^2(y) \Ub\left(y-\zeta\right)\, dy\sim\left\{\begin{aligned}
 &-\frac{\zeta_1}{|\zeta|}e^{-\sqrt{\omega_3}|\zeta|}|\zeta|^{-\frac{N-1}{2}},\quad &\mbox{if}\,\, \omega_3<4\omega_2\\
 &-\frac{\zeta_1}{|\zeta|}e^{-2\sqrt{\omega_2}|\zeta|}|\zeta|^{-(N-1)},\quad &\mbox{if}\,\, \omega_3>4\omega_2\\
&-\frac{\zeta_1}{|\zeta|}e^{-\sqrt{\omega_3}|\zeta|},\quad &\mbox{if}\,\, \omega_3=4\omega_2,\,\mbox{and}\, N=2\\
&-\frac{\zeta_1}{|\zeta|}e^{-\sqrt{\omega_3}|\zeta|}|\zeta|^{-1}\ln|\zeta|,\quad &\mbox{if}\,\, \omega_3=4\omega_2,\,\mbox{and}\, N=3\end{aligned}\right.$$
Lettin $\xi:=\frac{P_3+P_2}{\e}$ then
$$\int_{\RN}\partial_1 (\Ua)^2(y) \Ub\left(y-\xi\right)\, dy\sim\left\{\begin{aligned}
 &-\frac{\xi_1}{|\xi|}e^{-\sqrt{\omega_3}|\xi|}|\xi|^{-\frac{N-1}{2}},\quad &\mbox{if}\,\, \omega_3<4\omega_2\\
 &-\frac{\xi_1}{|\xi|}e^{-2\sqrt{\omega_2}|\xi|}|\xi|^{-(N-1)},\quad &\mbox{if}\,\, \omega_3>4\omega_2\\
&-\frac{\xi_1}{|\xi|}e^{-\sqrt{\omega_3}|\xi|},\quad &\mbox{if}\,\, \omega_3=4\omega_2,\,\mbox{and}\, N=2\\
&-\frac{\xi_1}{|\xi|}e^{-\sqrt{\omega_3}|\xi|}|\xi|^{-1}\ln|\xi|,\quad &\mbox{if}\,\, \omega_3=4\omega_2,\,\mbox{and}\, N=3\end{aligned}\right.$$
and finally
$$\int_{\RN}\partial_1 (\Ua)^2(y) \Ub\left(y+\xi\right)\, dy\sim\left\{\begin{aligned}
 &\frac{\xi_1}{|\xi|}e^{-\sqrt{\omega_3}|\xi|}|\xi|^{-\frac{N-1}{2}},\quad &\mbox{if}\,\, \omega_3<4\omega_2\\
 &\frac{\xi_1}{|\xi|}e^{-2\sqrt{\omega_2}|\xi|}|\xi|^{-(N-1)},\quad &\mbox{if}\,\, \omega_3>4\omega_2\\
&\frac{\xi_1}{|\xi|}e^{-\sqrt{\omega_3}|\xi|},\quad &\mbox{if}\,\, \omega_3=4\omega_2,\,\mbox{and}\, N=2\\
&\frac{\xi_1}{|\xi|}e^{-\sqrt{\omega_3}|\xi|}|\xi|^{-1}\ln|\xi|,\quad &\mbox{if}\,\, \omega_3=4\omega_2,\,\mbox{and}\, N=3\end{aligned}\right.$$
Now we remark that 
$$\zeta:=\frac{P_3-P_2}{\e}:=\frac 1 \e (-\rho_{2, \e}, \rho_{3, \e}, 0),\quad \xi:=\frac{P_3+P_2}{\e}:=\frac 1 \e (\rho_{2, \e}, \rho_{3, \e}, 0)$$ and hence $|\zeta|:=|\xi|:=\frac 1 \e \sqrt{(\rho_{2, \e})^2+(\rho_{3, \e})^2}:=\frac 1 \e|\rho_\e|.$\\ 
Hence
$$\int_{\RN}\Uae\Ube\mathcal Z_\e\, dx:=\mathfrak d(1+o(1))\left\{\begin{aligned}
 &-2\frac{\rho_{2, \e}}{|\rho_\e|}e^{-\sqrt{\omega_3}\frac{|\rho_\e|}{\e}}\left(\frac{|\rho_\e|}{\e}\right)^{-\frac{N-1}{2}},\quad &\mbox{if}\,\, \omega_3<4\omega_2\\
 &-2\frac{\rho_{2, \e}}{|\rho_\e|}e^{-2\sqrt{\omega_2}\frac{|\rho_\e|}{\e}}\left(\frac{|\rho_\e|}{\e}\right)^{-(N-1)},\quad &\mbox{if}\,\, \omega_3>4\omega_2\\
&-2\frac{\rho_{2,\e}}{|\rho_\e|}e^{-\sqrt{\omega_3}\frac{|\rho_\e|}{\e}},\quad &\mbox{if}\,\, \omega_3=4\omega_2,\,\mbox{and}\, N=2\\
&-2\frac{\rho_{2, \e}}{|\rho_\e|}e^{-\sqrt{\omega_3}\frac{|\rho_\e|}{\e}}\left(\frac{|\rho_\e|}{\e}\right)^{-1}\ln\frac{|\rho_\e|}{\e},\quad &\mbox{if}\,\, \omega_3=4\omega_2,\,\mbox{and}\, N=3\end{aligned}\right.$$
for some positive constant $\mathfrak d$.\\ By using Remark \ref{decay} it is easy to see that $$\int_{\RN} \Uae\varphi_3\mathcal Z_\e =o\left(\mathfrak F(\rho_{2, \e}, \rho_{3, \e})\right),\quad \int_{\RN} \Ube\varphi_2\mathcal Z_\e=o\left(\mathfrak F(\rho_{2, \e}, \rho_{3, \e})\right)\quad \int_{\RN}\varphi_2\varphi_3\mathcal Z_\e=o\left(\mathfrak F(\rho_{2, \e}, \rho_{3, \e})\right).$$
Putting together the various estimates we get \eqref{pro1}.\\ For what concerning the projection of $\mathcal E_3$ we have that
$$\begin{aligned}\int_{\RN}\mathcal E_3\mathcal Y_\e\, dx &:=
3\mu_3 \int_{\RN}\left((\Ubp)^2\Ubm+(\Ubm)^2\Ubp\right)\mathcal Y_\e\, dx\\
&+6\mu_2 \int_{\RN}\Ubp\Ubm\varphi_3\mathcal Y_\e\, dx+3\mu_3\int_{\RN}\Ube\varphi_3^2\mathcal Y_\e\, dx+\mu_3\int_{\RN}\varphi_3^3\mathcal Y_\e\, dx\\
&+\int_{\RN}(\omega_3-\omega_3(\e x))(\Ube+\varphi_3)\mathcal Y_\e\, dx+a_{31}\int_{\RN}\Ube(\varphi_1(\e x)-\varphi_1(0))\mathcal Y_\e\, dx\\
&+a_{31}\int_{\RN}\varphi_1(\e x)\varphi_3\mathcal Y_\e\, dx+\beta a_{32}\int_{\RN}(\Uae+\varphi_2)(\Ube+\varphi_3)\mathcal Y_\e\, dx.\end{aligned}$$
As before
$$\int_{\RN}(\omega_3-\omega_3(\e x))(\Ube+\varphi_3)\mathcal Y_\e\, dx=-\partial_{22}^2\omega_3(0)\bar{\mathfrak  b}\e\rho_{3, \e}+o(\e\rho_{3, \e} )$$ where
$$\bar{\mathfrak b}:=-\int_{\RN}y_2^2 \Ub(y)\frac{\Ub'(y)}{|y|}\, dy>0.$$
While
$$3\mu_3 \int_{\RN}\left((\Ubp)^2\Ubm+(\Ubm)^2\Ubp\right)\mathcal Y_\e\, dx=-2\mu_3 \bar{\mathfrak c} e^{-2\sqrt{\omega_3}\frac{\rho_{3, \e}}{\e}}\left(\frac{\rho_{3, \e}}{\e}\right)^{-\frac{N-1}{2}}(1+o(1)).$$
Again
$$\int_{\RN}\Ube(\varphi_1(\e x)-\varphi_1(0))\mathcal Y_\e\, dx=o(\e \rho_{3, \e}^2)$$ and
$$
\int_{\RN}\Ubp\Ubm\varphi_3\mathcal Y_\e\, dx=o\left(e^{-2\sqrt{\omega_3}\frac{\rho_{3, \e}}{\e}}\left(\frac{\rho_{3, \e}}{\e}\right)^{-\frac{N-1}{2}}\right).$$ Finally,
$$\int_{\RN}\varphi_3\varphi_1(\e x)\mathcal Y_\e\lesssim \e^N,\quad \int_{\RN}
 \Ube\varphi_3^2\mathcal Y_\e\lesssim \e^N,\quad \int_{\RN}\varphi_3^3\mathcal Y_\e\lesssim \e^N.$$ 
 We need to estimate the interaction between the second and the third equation.
 $$\begin{aligned}\int_{\RN}\Uae\Ube\mathcal Y_\e\, dx&=(1+o(1))\left(\int_{\RN}\Uap\Ubp \partial_2 \Ubp\, dx -\int_{\RN}\Uap\Ubm\partial_2\Ubm\, dx\right)\\
 &+(1+o(1))\left(\int_{\RN}\Uam\Ubp \partial_2 \Ubp\, dx -\int_{\RN}\Uam\Ubm\partial_2\Ubm\, dx\right)\\
 &=\frac 12 (1+o(1))\int_{\RN}\partial_2 (\Ub)^2(y) \Ua\left(y-\frac{P_3-P_2}{\e}\right)\, dy\\
 &-\frac 12 (1+o(1))\int_{\RN}\partial_2 (\Ub)^2(y) \Ua\left(y+\frac{P_3+P_2}{\e}\right)\, dy\\
 &+\frac 12 (1+o(1))\int_{\RN}\partial_2 (\Ub)^2(y) \Ua\left(y-\frac{P_3+P_2}{\e}\right)\, dy\\
 &-\frac 12(1+o(1))\int_{\RN}\partial_2 (\Ub)^2(y) \Ua\left(y+\frac{P_3-P_2}{\e}\right)\, dy
 \end{aligned}$$
and recalling the definition of $\zeta$ and $\xi$ we get
$$\int_{\RN}\partial_2 (\Ub)^2(y) \Ua\left(y-\zeta\right)\, dy\sim\left\{\begin{aligned}
 &-\frac{\zeta_2}{|\zeta|}e^{-\sqrt{\omega_2}|\zeta|}|\zeta|^{-\frac{N-1}{2}},\quad &\mbox{if}\,\, \omega_2<4\omega_3\\
 &-\frac{\zeta_2}{|\zeta|}e^{-2\sqrt{\omega_3}|\zeta|}|\zeta|^{-(N-1)},\quad &\mbox{if}\,\, \omega_2>4\omega_3\\
&-\frac{\zeta_2}{|\zeta|}e^{-\sqrt{\omega_2}|\zeta|},\quad &\mbox{if}\,\, \omega_2=4\omega_3,\,\mbox{and}\, N=2\\
&-\frac{\zeta_2}{|\zeta|}e^{-\sqrt{\omega_2}|\zeta|}|\zeta|^{-1}\ln|\zeta|,\quad &\mbox{if}\,\, \omega_2=4\omega_3,\,\mbox{and}\, N=3\end{aligned}\right.$$

$$\int_{\RN}\partial_2 (\Ub)^2(y) \Ua\left(y+\xi\right)\, dy\sim\left\{\begin{aligned}
 &\frac{\xi_2}{|\xi|}e^{-\sqrt{\omega_2}|\xi|}|\xi|^{-\frac{N-1}{2}},\quad &\mbox{if}\,\, \omega_2<4\omega_3\\
 &\frac{\xi_2}{|\xi|}e^{-2\sqrt{\omega_3}|\xi|}|\xi|^{-(N-1)},\quad &\mbox{if}\,\, \omega_2>4\omega_3\\
&\frac{\xi_2}{|\xi|}e^{-\sqrt{\omega_2}|\xi|},\quad &\mbox{if}\,\, \omega_2=4\omega_3,\,\mbox{and}\, N=2\\
&\frac{\xi_2}{|\xi|}e^{-\sqrt{\omega_2}|\xi|}|\xi|^{-1}\ln|\xi|,\quad &\mbox{if}\,\, \omega_2=4\omega_3,\,\mbox{and}\, N=3\end{aligned}\right.$$

$$\int_{\RN}\partial_2 (\Ub)^2(y) \Ua\left(y-\xi\right)\, dy\sim\left\{\begin{aligned}
 &-\frac{\xi_2}{|\xi|}e^{-\sqrt{\omega_2}|\xi|}|\xi|^{-\frac{N-1}{2}},\quad &\mbox{if}\,\, \omega_2<4\omega_3\\
 &-\frac{\xi_2}{|\xi|}e^{-2\sqrt{\omega_3}|\xi|}|\xi|^{-(N-1)},\quad &\mbox{if}\,\, \omega_2>4\omega_3\\
&-\frac{\xi_2}{|\xi|}e^{-\sqrt{\omega_2}|\xi|},\quad &\mbox{if}\,\, \omega_2=4\omega_3,\,\mbox{and}\, N=2\\
&-\frac{\xi_2}{|\xi|}e^{-\sqrt{\omega_2}|\xi|}|\xi|^{-1}\ln|\xi|,\quad &\mbox{if}\,\, \omega_2=4\omega_3,\,\mbox{and}\, N=3\end{aligned}\right.$$
and
$$\int_{\RN}\partial_2 (\Ub)^2(y) \Ua\left(y+\zeta\right)\, dy\sim\left\{\begin{aligned}
 &\frac{\zeta_2}{|\zeta|}e^{-\sqrt{\omega_2}|\zeta|}|\zeta|^{-\frac{N-1}{2}},\quad &\mbox{if}\,\, \omega_2<4\omega_3\\
 &\frac{\zeta_2}{|\zeta|}e^{-2\sqrt{\omega_3}|\zeta|}|\zeta|^{-(N-1)},\quad &\mbox{if}\,\, \omega_2>4\omega_3\\
&\frac{\zeta_2}{|\zeta|}e^{-\sqrt{\omega_2}|\zeta|},\quad &\mbox{if}\,\, \omega_2=4\omega_3,\,\mbox{and}\, N=2\\
&\frac{\zeta_2}{|\zeta|}e^{-\sqrt{\omega_2}|\zeta|}|\zeta|^{-1}\ln|\zeta|,\quad &\mbox{if}\,\, \omega_2=4\omega_3,\,\mbox{and}\, N=3\end{aligned}\right.$$
Then
$$\int_{\RN}\Uae\Ube\mathcal Y_\e\, dx=\bar{\mathfrak d}(1+o(1))\left\{\begin{aligned}
 &-2\frac{\rho_{3, \e}}{|\rho_\e|}e^{-\sqrt{\omega_2}\frac{|\rho_\e|}{\e}}\left(\frac{|\rho_\e|}{\e}\right)^{-\frac{N-1}{2}},\quad &\mbox{if}\,\, \omega_2<4\omega_3\\
 &-2\frac{\rho_{3, \e}}{|\rho_\e|}e^{-2\sqrt{\omega_3}\frac{|\rho_\e|}{\e}}\left(\frac{|\rho_\e|}{\e}\right)^{-(N-1)},\quad &\mbox{if}\,\, \omega_2>4\omega_3\\
&-2\frac{\rho_{3,\e}}{|\rho_\e|}e^{-\sqrt{\omega_2}\frac{|\rho_\e|}{\e}},\quad &\mbox{if}\,\, \omega_2=4\omega_3,\,\mbox{and}\, N=2\\
&-2\frac{\rho_{3, \e}}{|\rho_\e|}e^{-\sqrt{\omega_2}\frac{|\rho_\e|}{\e}}\left(\frac{|\rho_\e|}{\e}\right)^{-1}\ln\frac{|\rho_\e|}{\e},\quad &\mbox{if}\,\, \omega_2=4\omega_3,\,\mbox{and}\, N=3\end{aligned}\right.$$
By using Remark \ref{decay} it is easy to see that $$\int_{\RN} \Ube\varphi_2\mathcal Y_\e =o\left(\mathfrak G(\rho_{2, \e}, \rho_{3, \e})\right),\quad \int_{\RN} \Uae\varphi_3\mathcal Y_\e=o\left(\mathfrak G(\rho_{2, \e}, \rho_{3, \e})\right)\quad \int_{\RN}\varphi_2\varphi_3\mathcal Y_\e=o\left(\mathfrak G(\rho_{2, \e}, \rho_{3, \e})\right).$$
Putting together all the terms \eqref{pro2} follows.
\end{proof}

\begin{proof}[Proof of Theorem \ref{thm1}]
Let us define 
$$\mathfrak H_1(\rho_{2, \e}, \rho_{3, \e}):=-\partial^2_{11}\omega_2(0)\mathfrak b \e \rho_{2, \e}-2\mu_2 \mathfrak c e^{-2\sqrt{\omega_2}\frac{\rho_{2, \e}}{\e}}\left(\frac{\rho_{2, \e}}{\e}\right)^{-\frac{N-1}{2}}-\beta a_{23}\mathfrak d\mathfrak F(\rho_{2, \e}, \rho_{3, \e})$$
and
$$\mathfrak H_2(\rho_{2, \e}, \rho_{3, \e}):=-\partial_{22}^2\omega_3(0)\bar{\mathfrak  b}\e\rho_{3, \e}-2\mu_3\bar{ \mathfrak c }e^{-2\sqrt{\omega_3}\frac{\rho_{3, \e}}{\e}}\left(\frac{\rho_{3, \e}}{\e}\right)^{-\frac{N-1}{2}}-\beta a_{32}\bar{\mathfrak d}\mathfrak G(\rho_{2, \e}, \rho_{3, \e}).$$
We have to find $\rho_{2,\e}$ and $\rho_{3,\e}$ such that  $\mathfrak c_2=0$ and $\mathfrak c_3=0$. Then, putting together Lemmas \ref{prolinN} and \ref{proiezioni} we get that
\begin{equation}\label{sist}
\left\{\begin{aligned} \mathfrak H_1(\rho_{2, \e}, \rho_{3, \e})(1+o(1))+\mathcal O(|\beta|\tau_\e)&=0\\ \mathfrak H_2(\rho_{2, \e}, \rho_{3, \e})(1+o(1))+\mathcal O(|\beta|\tau_\e)&=0 \end{aligned}\right.\end{equation}
We want to understand better the functions $\mathfrak H_1$ and $\mathfrak H_2$. Here, we consider only the case $\omega_2\leq\frac{\omega_3}{4}$ since the case $\omega_3\leq\frac{\omega_2}{4}$ (and hence $\omega_2\geq 4\omega_3$) can be treated in a similar way.\\
If $\omega_2<\frac{\omega_3}{4}$ then
$$\mathfrak F(\rho_{2, \e}, \rho_{3, \e})=2\frac{\rho_{2, \e}}{|\rho_\e|}e^{-2\sqrt{\omega_2}\frac{|\rho_\e|}{\e}}\left(\frac{|\rho_\e|}{\e}\right)^{-(N-1)}$$ and if $\omega_2=\frac{\omega_3}{4}$ then
$$\mathfrak F(\rho_{2, \e}, \rho_{3, \e})=\left\{\begin{aligned}&2\frac{\rho_{2, \e}}{|\rho_\e|}e^{-\sqrt{\omega_3}\frac{|\rho_\e|}{\e}}\quad &\mbox{if}\, N=2\\
&2\frac{\rho_{2, \e}}{|\rho_\e|}e^{-\sqrt{\omega_3}\frac{|\rho_\e|}{\e}}\frac{\e}{|\rho_\e|}\ln\frac{|\rho_\e|}{\e}\quad &\mbox{if}\, N=3.\end{aligned}\right.$$
In any case $$\mathfrak F(\rho_{2, \e}, \rho_{3, \e})=o\left(e^{-2\sqrt{\omega_2}\frac{\rho_{2, \e}}{\e}}\left(\frac{\rho_{2, \e}}{\e}\right)^{-\frac{N-1}{2}}\right)$$ since $\rho_{2, \e}<|\rho_\e|$. Moreover in both cases
$$\mathfrak G(\rho_{2, \e}, \rho_{3, \e})=2\frac{\rho_{3, \e}}{|\rho_\e|}e^{-\sqrt{\omega_2}\frac{|\rho_\e|}{\e}}\left(\frac{|\rho_\e|}{\e}\right)^{-\frac{N-1}{2}}.$$
Hence the first equation of \eqref{sist} becomes 
\begin{equation}\label{case1}\underbrace{\left(-\partial^2_{11}\omega_2(0)\mathfrak b \e \rho_{2, \e}-2\mu_2 \mathfrak c e^{-2\sqrt{\omega_2}\frac{\rho_{2, \e}}{\e}}\left(\frac{\rho_{2, \e}}{\e}\right)^{-\frac{N-1}{2}}\right)}_{:=\mathfrak f(\rho_{2, \e})}(1+o(1))+\mathcal O(|\beta|\tau_\e)=0\end{equation} 
and the second equation of \eqref{sist} becomes 
\begin{equation}\label{case2}\begin{aligned}&\left(-\partial_{22}^2\omega_3(0)\bar{\mathfrak  b}\e\rho_{3, \e}-2\mu_3\bar{\mathfrak c} e^{-2\sqrt{\omega_3}\frac{\rho_{3, \e}}{\e}}\left(\frac{\rho_{3, \e}}{\e}\right)^{-\frac{N-1}{2}}-2\beta a_{32}\frac{\rho_{3, \e}}{|\rho_\e|}e^{-\sqrt{\omega_2}\frac{|\rho_\e|}{\e}}\left(\frac{|\rho_\e|}{\e}\right)^{-\frac{N-1}{2}}\right)(1+o(1))\\&+\mathcal O(|\beta|\tau_\e)=0.\end{aligned}\end{equation} where
 $-\partial^2_{11}\omega_2(0)\mathfrak b>0$ and $-\partial_{22}^2\omega_3(0)\bar{\mathfrak  b}>0$ with $a_{32}>0$ while  $-\partial_{22}^2\omega_3(0)\bar{\mathfrak  b}<0$ with $a_{32}<0$.\\
 Now we choose  $$\beta= \beta_0\e^b,\quad \beta_0>0$$ so that
 \beq\label{conprima}\beta\tau_\e =o(\e^2|\ln\e|)\eeq and \beq\label{conseconda}e^{-2\sqrt{\omega_3}\frac{\rho_{3, \e}}{\e}}\left(\frac{\rho_{3, \e}}{\e}\right)^{-\frac{N-1}{2}}=o\left(\beta a_{32}\frac{\rho_{3, \e}}{|\rho_\e|}e^{-\sqrt{\omega_2}\frac{|\rho_\e|}{\e}}\left(\frac{|\rho_\e|}{\e}\right)^{-\frac{N-1}{2}}\right).\eeq
 Hence the system that we have to solve reduces to
 $$\left\{\begin{aligned} &\mathfrak f(\rho_{2, \e})(1+o(1))=0\\ &
\mathfrak g(\rho_{2, \e}, \rho_{3, \e})(1+o(1))=0\end{aligned}\right.$$ where $$\mathfrak g(\rho_{2, \e}, \rho_{3, \e}):=\left(-\partial_{22}^2\omega_3(0)\bar{\mathfrak  b}\e\rho_{3, \e}-2\beta a_{32}\frac{\rho_{3, \e}}{|\rho_\e|}e^{-\sqrt{\omega_2}\frac{|\rho_\e|}{\e}}\left(\frac{|\rho_\e|}{\e}\right)^{-\frac{N-1}{2}}\right).$$
Then, it is easy to see that for $\e$ small 
$$\mathfrak f\left(\left(\frac{1}{\sqrt\omega_2}-\delta\right)\e|\ln\e|\right)<0,\quad \mathfrak f\left(\left(\frac{1}{\sqrt\omega_2}+\delta\right)\e|\ln\e|\right)>0$$ and hence has a zero at $$\bar\rho_2:=\left(\frac{1}{\sqrt\omega_2}+o(1)\right)\e|\ln\e|.$$ Moreover
 $$\mathfrak g\left(\bar\rho_2,\left(\frac{\sqrt {(1-b)(3-b)}}{\sqrt{\omega_2}}-\delta\right)\e|\ln\e|\right)<0,\quad \mathfrak g\left(\bar\rho_2,\left(\frac{(1-b)(3-b}{\sqrt{\omega_2}}+\delta\right)\e|\ln\e|\right)>0$$ if $-\partial_{22}^2\omega_3(0)\bar{\mathfrak  b}>0$ and $a_{32}>0$ while  
 $$\mathfrak g\left(\bar\rho_2,\left(\frac{\sqrt {(1-b)(3-b)}}{\sqrt{\omega_2}}-\delta\right)\e|\ln\e|\right)>0,\quad \mathfrak g\left(\bar\rho_2,\left(\frac{(1-b)(3-b}{\sqrt{\omega_2}}+\delta\right)\e|\ln\e|\right)<0$$ if $-\partial_{22}^2\omega_3(0)\bar{\mathfrak  b}<0$ and  $a_{32}<0$. In any case $\mathfrak g$ has a zero at some $$\bar{\rho}_3:=\left(\frac{\sqrt{(1-b)(3-b)}}{\sqrt\omega_2}+o(1)\right)\e|\ln\e|.$$
 It remains to show that \eqref{conprima} and \eqref{conseconda} hold.\\
 Indeed, $$\frac{\beta\tau_\e}{\e^2|\ln\e|}\lesssim \e^b|\ln\e|+\frac{\e^{b-\delta}}{|\ln\e|^{\frac{N-1}{2}+1}}+\frac{\e^{2\left(\sqrt{\frac{\omega_3}{\omega_2}}\sqrt{(1-b)(3-b)}-1\right)-\delta}}{|\ln\e|^{\frac{N-1}{2}+1}}\rightarrow 0$$ since $0<b<1$ and $\delta$ is sufficiently small.\\
 Moreover 
  $$\frac{e^{-2\sqrt{\omega_3}\frac{\rho_{3, \e}}{\e}}\left(\frac{\rho_{3, \e}}{\e}\right)^{-\frac{N-1}{2}}}{\beta e^{-\sqrt{\omega_2}\frac{|\rho_\e|}{\e}}\left(\frac{|\rho_\e|}{\e}\right)^{-\frac{N-1}{2}}}\lesssim \e^{2\left(\sqrt{\frac{\omega_3}{\omega_2}}\sqrt{(1-b)(3-b)}-1\right)-\delta}\rightarrow 0$$
since $b<1$ and $\delta$ is sufficiently small.\\
 The case $\omega_3\leq \frac{\omega_2}{4}$ can be treated in a similar way changing the role of the second and of the third equation.\\

Let now the case in which $\frac{\omega_3}{4}<\omega_2<4\omega_3$ and $\omega_2\neq \omega_3$. Then
we have that 
 $$\mathfrak F(\rho_{2, \e}, \rho_{3, \e})=2\frac{\rho_{2, \e}}{|\rho_\e|}e^{-\sqrt{\omega_3}\frac{|\rho_\e|}{\e}}\left(\frac{|\rho_\e|}{\e}\right)^{-\frac{N-1}{2}}$$  and 
 $$\mathfrak G(\rho_{2, \e}, \rho_{3, \e})=2\frac{\rho_{3, \e}}{|\rho_\e|}e^{-\sqrt{\omega_2}\frac{|\rho_\e|}{\e}}\left(\frac{|\rho_\e|}{\e}\right)^{-\frac{N-1}{2}}.$$
 Then the first equation becomes 
 \begin{equation}\label{case3}\begin{aligned}&\underbrace{\left(-\partial_{11}^2\omega_2(0){\mathfrak  b}\e\rho_{2, \e}-2\mu_2{\mathfrak c} e^{-2\sqrt{\omega_2}\frac{\rho_{2, \e}}{\e}}\left(\frac{\rho_{2, \e}}{\e}\right)^{-\frac{N-1}{2}}-2\beta a_{23}\frac{\rho_{2, \e}}{|\rho_\e|}e^{-\sqrt{\omega_3}\frac{|\rho_\e|}{\e}}\left(\frac{|\rho_\e|}{\e}\right)^{-\frac{N-1}{2}}\right)}_{:=\mathfrak f(\rho_{2, \e}, \rho_{3, \e})}(1+o(1))\\&+\mathcal O(|\beta|\tau_\e)=0.\end{aligned}\end{equation}
 and the second equation becomes
 \begin{equation}\label{case4}\begin{aligned}&\underbrace{\left(-\partial_{22}^2\omega_3(0)\bar{\mathfrak  b}\e\rho_{3, \e}-2\mu_3\bar{\mathfrak c} e^{-2\sqrt{\omega_3}\frac{\rho_{3, \e}}{\e}}\left(\frac{\rho_{3, \e}}{\e}\right)^{-\frac{N-1}{2}}-2\beta a_{32}\frac{\rho_{3, \e}}{|\rho_\e|}e^{-\sqrt{\omega_2}\frac{|\rho_\e|}{\e}}\left(\frac{|\rho_\e|}{\e}\right)^{-\frac{N-1}{2}}\right)}_{:=\mathfrak g(\rho_{2, \e}, \rho_{3, \e})}(1+o(1))\\ &+\mathcal O(|\beta|\tau_\e)=0.\end{aligned}\end{equation}
 In this case we choose 
 \begin{equation}\label{con443}|\beta|\lesssim \beta_\e:=\e^b,\quad b>2-\frac{\sqrt 5}{2}\end{equation} so that \begin{equation}\label{con444}|\beta|\frac{\rho_{2, \e}}{|\rho_\e|}e^{-\sqrt{\omega_3}\frac{|\rho_\e|}{\e}}\left(\frac{|\rho_\e|}{\e}\right)^{-\frac{N-1}{2}}=o\left(e^{-2\sqrt{\omega_2}\frac{\rho_{2, \e}}{\e}}\left(\frac{\rho_{2, \e}}{\e}\right)^{-\frac{N-1}{2}}\right)\end{equation} \begin{equation}\label{cond3}|\beta|\frac{\rho_{3, \e}}{|\rho_\e|}e^{-\sqrt{\omega_2}\frac{|\rho_\e|}{\e}}\left(\frac{|\rho_\e|}{\e}\right)^{-\frac{N-1}{2}}=o\left(e^{-2\sqrt{\omega_3}\frac{\rho_{3, \e}}{\e}}\left(\frac{\rho_{3, \e}}{\e}\right)^{-\frac{N-1}{2}}\right)\end{equation} and
  \begin{equation}\label{cond4}|\beta|\tau_\e=o\left(\e^{2}|\ln\e|\right)\end{equation}
 Then $\mathfrak f$ reduces to
 $$\mathfrak f(\rho_{2, \e}, \rho_{3, \e})= \mathfrak f (\rho_{2, \e})=-\partial_{11}^2\omega_2(0){\mathfrak  b}\e\rho_{2, \e}-2\mu_2{\mathfrak c} e^{-2\sqrt{\omega_2}\frac{\rho_{2, \e}}{\e}}\left(\frac{\rho_{2, \e}}{\e}\right)^{-\frac{N-1}{2}}$$ and again 
 $$\mathfrak f\left(\left(\frac{1}{\sqrt{\omega_2}}-\delta\right)\e|\ln\e|\right)<0,\quad \mathfrak f\left(\left(\frac{1}{\sqrt{\omega_2}}+\delta\right)\e|\ln\e|\right)>0$$ and hence \eqref{case3} has a zero at some $$\bar{\rho}_2:=\left(\frac{1}{\sqrt\omega_2}+o(1)\right)\e|\ln\e|.$$
Similarly, $\mathfrak g$ reduces to
$$\mathfrak g(\rho_{2, \e}, \rho_{3, \e})= \mathfrak g (\rho_{3, \e})=-\partial_{22}^2\omega_3(0)\bar{\mathfrak  b}\e\rho_{3, \e}-2\mu_3\bar{\mathfrak c} e^{-2\sqrt{\omega_3}\frac{\rho_{3, \e}}{\e}}\left(\frac{\rho_{3, \e}}{\e}\right)^{-\frac{N-1}{2}}$$ and again 
 $$\mathfrak g\left(\left(\frac{1}{\sqrt{\omega_3}}-\delta\right)\e|\ln\e|\right)<0,\quad \mathfrak g\left(\left(\frac{1}{\sqrt{\omega_3}}+\delta\right)\e|\ln\e|\right)>0$$ and hence \eqref{case4} has a zero at some $$\bar{\rho}_3:=\left(\frac{1}{\sqrt\omega_3}+o(1)\right)\e|\ln\e|.$$
 It only remains to prove that \eqref{con443} and \eqref{cond3} hold for any $\rho_{i, \e}\in\mathcal D_{i, \e}^3$.\\
 Indeed, we have that $$\frac{|\beta|\frac{\rho_{2, \e}}{|\rho_\e|}e^{-\sqrt{\omega_3}\frac{|\rho_\e|}{\e}}\left(\frac{|\rho_\e|}{\e}\right)^{-\frac{N-1}{2}}}{e^{-2\sqrt{\omega_2}\frac{\rho_{2, \e}}{\e}}\left(\frac{\rho_{2, \e}}{\e}\right)^{-\frac{N-1}{2}}}\lesssim \e^{b+\sqrt{1+\frac{\omega_3}{\omega_2}}-2-\delta}\lesssim \e^{b+\frac{\sqrt 5}{2} -2-\delta}\to 0$$ provided $b>2-\frac{\sqrt 5}{2}$ and $\delta$ sufficiently small. Moreover
 $$\frac{|\beta|\frac{\rho_{3, \e}}{|\rho_\e|}e^{-\sqrt{\omega_2}\frac{|\rho_\e|}{\e}}\left(\frac{|\rho_\e|}{\e}\right)^{-\frac{N-1}{2}}}{e^{-2\sqrt{\omega_3}\frac{\rho_{3, \e}}{\e}}\left(\frac{\rho_{3, \e}}{\e}\right)^{-\frac{N-1}{2}}}\lesssim \e^{b+\sqrt{1+\frac{\omega_2}{\omega_3}}-2-\hat\delta}\lesssim \e^{b+\frac{\sqrt 5}{2} -2-\delta}\to 0$$ again provided $b>2-\frac{\sqrt 5}{2}$ and $\delta$ sufficiently small.\\ At the end, we also get
 $$\frac{|\beta|\tau_\e}{\e^2} \lesssim \e^{b-2} \left(\e^{2}|\ln\e|+|\beta|\e^{\frac{\sqrt 5}{2}-\sigma}\right)\lesssim \e^{b-\sigma}+\e^{2b-2+\frac{\sqrt 5}{2}-\sigma}\rightarrow 0.$$  If, finally, $\omega_2=\omega_3=\omega_0$ then again $\mathfrak f$ and $\mathfrak g$ are defined as in \eqref{case3} and \eqref{case4}. Reason exactly as in the previous cases we obtain the result.
 \end{proof}

\begin{proof}[Theorem \ref{thm2} ]
If we have only two populations, namely the number of equations are two in \eqref{DS}, then we can find the error term as in Proposition \ref{nonlin} and $$\|(\Phi_1, \Phi_2)\|_X \lesssim \e^2\|\ln\e|^2.$$ Now, by lemmas \ref{prolinN} and \ref{proiezioni} we get that
$\mathfrak c_2=0$ if and only if $$\left(-\partial^2_{11}\omega_2(0)\mathfrak b \e \rho_{2, \e}-2\mu_2 \mathfrak c e^{-2\sqrt{\omega_2}\frac{\rho_{2, \e}}{\e}}\left(\frac{\rho_{2, \e}}{\e}\right)^{-\frac{N-1}{2}}\right)(1+o(1))=0.$$
The theorem follows since it is possible to show that a zero at $$\bar\rho_2:=\left(\frac{1}{\sqrt{\omega_2}}+o(1)\right)\e|\ln\e|$$ occurs
 by only assuming that $a_{21}<0$. The proof goes in the same line of \cite{PPVV}.\end{proof}

\section{Further results}\label{generalizzazione}
The same idea can be used to consider the following system of Gross-Pitaevskii type
$$ \left\{
\begin{aligned}
-  \Delta u_1 +V (x)u_1&=\mu_1 u_1^3+ u_1 \sum\limits_{j=2}^k \beta_{1j} u_j^2 &
 \hbox{in}\ \mathbb R^N,
 \\
-\e^2 \Delta u_j +W_j(x)u_j&=\mu_j u_j^3+u_j  \left(\beta_{j1} u_1^2+ \sum\limits_{i\not=j} \beta_{ji} u_i^2\right) & \hbox{in}\ \mathbb R^N,\ j=2,\dots,k.
\end{aligned}\right.
$$
In \cite{PPVV}  the case of a binary mixture is considered.\\ 
If we let again $k=3$ and we make the usual change of variables
we obtain the following system
\begin{equation}\label{GP} \left\{
\begin{aligned}
-  \Delta u_1+V (x)u_1&=\mu_1 u^3_1+ u_1 \sum\limits_{j=2}^3 \beta_{1j} u_j^2\left(\frac x \e\right) &
 \hbox{in}\ \mathbb R^N,
 \\
-\Delta u_j +W_j(\e x)u_j&=\mu_j u_j^3+u_j  \left(\beta_{j1} u_1^2(\e x)+ \sum\limits_{i\not=j\atop i\neq 1} \beta_{ji} u_i^2\right) & \hbox{in}\ \mathbb R^N,\ j=2,3.
\end{aligned}\right.
\end{equation}
We assume that $({\bf V}_1)$, $({\bf V}_2)$ and $({\bf W}_i)$ hold and 
we let $X$ as in \eqref{X}. \\ We look for a solution $(u_1, u_2, u_3 )$ of \eqref{GP} of the form
$$u_1(x)=\Upsilon(x)+\varphi_1(x)+\Phi_1(x),\quad u_i(x)= \Ui(x)+\varphi_i(x)+\Phi_i(x), i=2, 3$$
where $\Upsilon$ solves \eqref{eq:V1}, $\varphi_1=\beta_{12}\eta_2+\beta_{13}\eta_3$ where $\eta_i\in H^2_V(\R^N)$ are the unique  even solution of the equation
\begin{equation} \label{eq:V}
-\Delta\eta_i  +\left(V(x)-3\mu_{1}\Upsilon^{2}(x)\right)\eta_i 
= \Upsilon(x) (\Ui)^{2}\left(\frac{x}\e\right).
\end{equation}
Moreover, $\varphi_1$ satisfies  $\|\varphi_1\|_{W^{2,m}(\R^N)}\lesssim \e^{N\over m}$ and 
$ \|\varphi_1\|_{C^{1,1-\frac N m}(\R^N)}\lesssim \e^{N\over m}$ for any $m\geq2.$
Moreover, for $i=2, 3$ we let $\varphi_i \in H^2 (\R^N)$,  solution of the equation 
\begin{equation}\label{eq:Psi}
-\Delta\varphi_i   + \left(\omega_i -3\mu_{i} \left((\Umm)^2+(\Ump)^2)\right) \right)\varphi_i  = 2 \beta_{i1}\Upsilon(0)\varphi_1(0) \Ui(x).
\end{equation}
Moreover,   $\|\varphi_i \|_{H^2(\R^N)}\lesssim \e^{N\over2}$ and 
 there exist $ \gamma_i,\, R_{0} >0$ such that
\[
| \varphi_i (x)|\lesssim \e^{N/2}\left( e^{-\gamma_i |x+\frac{P_{i}}\e|}+e^{-\gamma_i |x-\frac{P_{i}}\e|}\right),\qquad \forall\, |x|\geq R_{0}.
\]
Finally
\begin{equation}\label{hatU}
 \Ui(x)=\Um\left(x-\frac{P_i}{\e}\right)+\Um\left(x+\frac{P_i}{\e}\right):=\Umm+\Ump
\end{equation}
and the concentration points satisfy 
\begin{equation}\label{P1}
P_2=\rho_{2, \e} P_0=\rho_{2, \e}(1, 0, \ldots, 0),\,\, \frac{\rho_{2, \e}}{\e}\to+\infty\, \mbox{as}\,\, \e\to0.
\end{equation}
\begin{equation}\label{P2}
P_3=\rho_{3, \e} \bar P_0=\rho_{3, \e}(0, 1, \ldots, 0),\,\, \frac{\rho_{3, \e}}{\e}\to+\infty\, \mbox{as}\,\, \e\to0.
\end{equation}
Moreover $$\frac{|P_2\pm P_3|}{\e}=\frac{\sqrt{\rho_{2, \e}^2+\rho_{3, \e}^2}}{\e}\to +\infty\, \mbox{as}\,\, \e\to0.
$$ We let $\mathcal Z_\e$ and $\mathcal Y_\e$ the solutions of \eqref{Ze} and \eqref{Ye} respectively.\\\\
In what follows, we consider, for simplicity, only the case in which $\omega_i\equiv \omega_0$.\\\\
In order to simplify the notation we let 
$$\Gamma:=\Upsilon+\varphi_1,\quad \Psi_i:=\Ui+\varphi_i.$$
Again we rewrite the problem in the following form $$\mathcal L(\Phi)=\mathcal E+\mathcal N(\Phi)$$ where now
$$\mathcal L_1:=-\Delta\Phi_1+V(x)\Phi_1-3\mu_1 \Gamma^2\Phi_1-\Phi_1\sum_{i=2}^3\beta_{1i}\Psi_i^2\left(\frac x \e\right)-2\sum_{i=2}^3 \Gamma \Psi\left(\frac x \e\right)\Phi_i\left(\frac x \e\right)$$
$$\begin{aligned}\mathcal L_i&:=-\Delta\Phi_i+W_i(\e x)\Phi_i-3\mu_i \Psi_i^2\Phi_i-\beta_{i1}\Gamma^2(\e x)\Phi_i-2\beta_{i1}\Psi_i \Gamma(\e x)\Phi_1(\e x)\\&-2\sum_{j\neq i\atop j\neq 1}\beta_{ij}\Psi_i\Psi_j \Phi_j-\sum_{j\neq i\atop j\neq 1}\beta_{ij}\Psi_j^2\Phi_i\end{aligned}$$
$$\mathcal E_1:=\Delta\Gamma-V(x)\Gamma+\mu_1\Gamma^3+\sum_{j=2}^3\beta_{1j}\Gamma\Psi_i^2\left(\frac x \e\right)$$
$$\begin{aligned}\mathcal E_i&:=\Delta\Psi_i-W_i(\e x)\Psi_i+\mu_i\Psi_i^3+\beta_{i1}\Gamma^2(\e x)\Psi_i-\sum_{j\neq i\atop j\neq 1}\beta_{ij}\Psi_i\Psi_j^2\\
\end{aligned}$$
$$\mathcal N_1:=3\mu_1\Gamma\Phi_1^2+\mu_1\Phi_1^3+\sum_{j=2}^3\beta_{1j}\Gamma\Phi_j^2\left(\frac x \e\right)+\sum_{j=2}^3\beta_{1j}\Phi_1\Phi_j^2\left(\frac x \e\right)+2\sum_{j=2}^3\beta_{1j}\Phi_1\Psi_i\Phi_i\left(\frac x \e\right)$$
$$\begin{aligned}\mathcal N_i&:=3\mu_i\Psi_i\Phi_i^2+\mu_i\Phi_i^3+\beta_{i1}\Psi_i\Phi_1^2(\e x)+\beta_{i1}\Phi_i\Phi_1^2(\e x)\\
&+2\beta_{i1}\Gamma(\e x)\Phi_1(\e x)\Phi_i+\sum_{j\neq i \atop j\neq 1}(\Psi_i+\Phi_i)\Phi_j^2+2\sum_{j\neq i \atop j\neq 1}\beta_{ij}\Psi_j\Phi_i\Phi_j\end{aligned}$$
and hence again, letting $\mathcal K$ and $\Pi$ as before, we get that the problem that we have to solve is 
$$\left\{\begin{aligned} \Pi\left\{\mathcal L(\Phi)-\mathcal E-\mathcal N(\Phi)\right\}&=0\\
\Pi^\bot\left\{\mathcal L(\Phi)-\mathcal E-\mathcal N(\Phi)\right\}&=0.\end{aligned}\right.$$
Arguing as before one can show the following result.
\begin{proposition}\label{nonlinGP}
There exits $\e_0>0$ such that for every $\e\in(0, \e_0)$ there exists a unique solution $(\Phi_1, \Phi_2, \Phi_3)\in\mathcal K^\bot $ of the system \eqref{DSnonlin1}. Furthermore, $$\|(\Phi_1, \Phi_2, \Phi_3)\|_X\lesssim \e^{2}|\ln\e|^2.$$ \end{proposition}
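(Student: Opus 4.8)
\emph{Sketch of proof.} The scheme is the same Lyapunov--Schmidt reduction used for Proposition~\ref{nonlin}: invert the linearized operator on $\mathcal K^\bot$, estimate the error $\mathcal E=(\mathcal E_1,\mathcal E_2,\mathcal E_3)$, control the remainder $\mathcal N=(\mathcal N_1,\mathcal N_2,\mathcal N_3)$, and solve the resulting fixed-point equation by contraction. The one structural simplification with respect to the Lotka--Volterra case is that, the coupling being of Gross--Pitaevskii type, \emph{none} of the summands of $\mathcal N_1,\mathcal N_2,\mathcal N_3$ is linear in $(\Phi_1,\Phi_2,\Phi_3)$; in particular the term $-\beta a_{23}(\Uae+\varphi_2)\Phi_3$ that forced $|\beta|$ to be small in Proposition~\ref{nonlin} is now absent, so no smallness assumption on the coupling constants is needed and the conclusion holds for every $\e\in(0,\e_0)$.

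The key step is the analogue of Lemma~\ref{inv}: there exist $c>0$ and $\e_0>0$ such that $\|\Pi^\bot\mathcal L(\Phi_1,\Phi_2,\Phi_3)\|_{L^2(\RN)^3}\ge c\,\|(\Phi_1,\Phi_2,\Phi_3)\|_X$ for all $(\Phi_1,\Phi_2,\Phi_3)\in\mathcal K^\bot$ and all $\e\in(0,\e_0)$. I would argue by contradiction along the four steps of Lemma~\ref{inv}. First, using $(\mathbf{V}_1)$, $(\mathbf{V}_2)$ and the non-degeneracy of $\Upsilon$ in $H^1_e(\RN)$, one gets $\Phi_{1,n}\to 0$ in $H^2_V$, hence in $L^\infty$. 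Second, testing the $j$-th equation with $\mathcal Z_{\e_n}$ (resp.\ $\mathcal Y_{\e_n}$) and using \eqref{Ze}--\eqref{Ye}, the exponential decay of $\Um$, the bound $|\omega_i(\e_n x)-\omega_0|\lesssim|\e_n x|$ near the origin, the decay of the $\varphi_i$, and the interaction estimate \eqref{utile}, the multipliers $\mathfrak c_{j,n}$ tend to $0$. Third, after translating the components to the moving peaks $\pm P_{j,\e_n}/\e_n$, the translated sequences converge weakly to $0$ in $H^1(\RN)$ and strongly in $L^2_{loc}(\RN)$: any nontrivial limit would, thanks to the residual evenness in the variables transverse to the peak axis, solve the linearized equation around the limit profile and lie in the one-dimensional space spanned by $\partial_1\Ua$ for the second component (resp.\ $\partial_2\Ub$ for the third) (cf.~Remark~\ref{rema1}), which is precisely the direction removed by the orthogonality conditions defining $\mathcal K^\bot$. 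Fourth, testing with $\Phi_{j,n}$ itself and using $(\mathbf{W}_j)$ upgrades this to strong convergence in $H^1(\RN)$, and then $(\mathbf{V}_2)$ gives strong convergence in $H^2_{W_j,\e_n}(\RN)$, contradicting $\|(\Phi_{1,n},\Phi_{2,n},\Phi_{3,n})\|_X=1$. The only change from Lemma~\ref{inv} is bookkeeping: the off-diagonal coefficients are now $\beta_{ij}\Psi_i\Psi_j$, $\beta_{ij}\Psi_j^2$, $\Gamma\Psi_i(\cdot/\e)$ and $\beta_{i1}\Gamma^2(\e\cdot)$ in place of the $a_{ij}$-affine terms, but they are uniformly bounded and exponentially small away from the relevant peaks, so every estimate transfers verbatim. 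I expect this invertibility lemma to be the main obstacle, since one must simultaneously rule out escape of mass of $\Phi_{2,n},\Phi_{3,n}$ both at the two moving peaks and at infinity.

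Next I would bound $\mathcal E$ in $L^2(\RN)^3$ as in Lemma~\ref{errore}, now in the regime $\omega_2=\omega_3=\omega_0$: the potential-mismatch terms give $\mathcal O(\rho_{i,\e}^2)=\mathcal O(\e^2|\ln\e|^2)$; the self-interaction terms $(\Umm)^2\Ump$, $(\Ump)^2\Umm$ are $\mathcal O\!\big(e^{-2\sqrt{\omega_0}\,\rho_{i,\e}/\e}(\rho_{i,\e}/\e)^{-(N-1)/2}\big)$, which on $\mathcal D_{i,\e}^4$ is $o(\e^2|\ln\e|^2)$; the cross terms built from $\Psi_2^2\Psi_3^2$ are controlled by $\eta_\e$ via Lemma~\ref{ACR}, again $o(\e^2|\ln\e|^2)$; and everything involving $\varphi_1,\varphi_2,\varphi_3$ is $\mathcal O(\e^N)$ by Lemma~\ref{hatvarphi}, Lemma~\ref{esistphii} and the decay bounds recalled for the $\varphi_i$. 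Hence $\|\mathcal E\|_{L^2(\RN)^3}\lesssim\e^2|\ln\e|^2$. For the remainder, inspection of $\mathcal N_1,\mathcal N_2,\mathcal N_3$ shows every summand is at least quadratic in $(\Phi_1,\Phi_2,\Phi_3)$, so $\|\mathcal N(\Phi)\|_{L^2(\RN)^3}\lesssim\|\Phi\|_X^2$ and $\|\mathcal N(\Phi)-\mathcal N(\tilde\Phi)\|_{L^2(\RN)^3}\lesssim(\|\Phi\|_X+\|\tilde\Phi\|_X)\|\Phi-\tilde\Phi\|_X$ on bounded sets, with no linear contribution in $\Phi$.

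Finally I would close by a contraction. On the ball $B_\e:=\{(\Phi_1,\Phi_2,\Phi_3)\in\mathcal K^\bot:\|(\Phi_1,\Phi_2,\Phi_3)\|_X\le R\,\e^2|\ln\e|^2\}$ with $R$ fixed and large, the map $T(\Phi):=(\Pi^\bot\mathcal L|_{\mathcal K^\bot})^{-1}\Pi^\bot(\mathcal E+\mathcal N(\Phi))$ is well defined by the invertibility step, obeys $\|T(\Phi)\|_X\le c^{-1}\big(\|\mathcal E\|_{L^2(\RN)^3}+\|\mathcal N(\Phi)\|_{L^2(\RN)^3}\big)\lesssim\e^2|\ln\e|^2+(R\e^2|\ln\e|^2)^2\le R\,\e^2|\ln\e|^2$ for $\e$ small, hence maps $B_\e$ into itself, and is a contraction there since $\|T(\Phi)-T(\tilde\Phi)\|_X\lesssim\e^2|\ln\e|^2\,\|\Phi-\tilde\Phi\|_X$. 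Banach's fixed point theorem then yields the unique $(\Phi_1,\Phi_2,\Phi_3)\in\mathcal K^\bot$ with $\Pi^\bot\{\mathcal L(\Phi)-\mathcal E-\mathcal N(\Phi)\}=0$ and $\|(\Phi_1,\Phi_2,\Phi_3)\|_X\lesssim\e^2|\ln\e|^2$, as claimed.
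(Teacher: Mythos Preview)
Your sketch is correct and follows the paper's own approach exactly: the paper also refers back to the Lotka--Volterra argument (Lemma~\ref{inv}, Lemma~\ref{errore}, Proposition~\ref{nonlin}) and only adds the single remark that the off-diagonal linear piece $\sum_{j\neq i,\, j\neq 1}\beta_{ij}\Psi_i\Psi_j\Phi_j$, having the exponentially small coefficient $\Psi_i\Psi_j$, can be absorbed into $\mathcal L$, so that $\mathcal N$ is genuinely quadratic and no smallness of the $\beta_{ij}$ is required. Your phrasing that the problematic linear term ``is now absent'' from $\mathcal N$ is precisely this observation stated from the complementary side, and your handling of the invertibility step with the new off-diagonal coefficients $\beta_{ij}\Psi_i\Psi_j$, $\beta_{ij}\Psi_j^2$ is exactly what the paper's remark after Proposition~\ref{nonlinGP} is pointing to.
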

We only remark that here we do not need the smallness of $\beta_{23}$ and $\beta_{32}$ since the term $$ \sum_{j\neq i\atop j\neq 1}\beta_{ij}\Psi_j\Psi_j\Phi_j$$ can be absorbed in the linear part. Indeed, $$\int_{\mathbb R^N}\Psi_2\Psi_3\Phi_3\mathcal Z_\e\, dx \lesssim \|(\Uae)^2\Ube\|_{L^2(\mathbb R^N)}\|\Phi_3\|=o(1)$$ as $\e\to 0$.
Now we project the error term obtaining the following result.
\begin{lemma}\label{proiezioniGP}
It holds
\begin{equation}\label{pro1}\int_{\RN}\mathcal E_2 \mathcal Z_\e\, dx=\left[-\partial^2_{11}\omega_2(0)\mathfrak b \e \rho_{2, \e}-2\mu_2 \mathfrak c e^{-2\sqrt{\omega_0}\frac{\rho_{2, \e}}{\e}}\left(\frac{\rho_{2, \e}}{\e}\right)^{-\frac{N-1}{2}}\right](1+o(1))\end{equation} for some positive $\mathfrak b$, $\mathfrak c$ and
\begin{equation}\label{pro2}\int_{\RN}\mathcal E_3 \mathcal Y_\e\, dx=\left[-\partial_{22}^2\omega_3(0)\bar{\mathfrak  b}\e\rho_{3, \e}-2\mu_3\bar{ \mathfrak c }e^{-2\sqrt{\omega_0}\frac{\rho_{3, \e}}{\e}}\left(\frac{\rho_{3, \e}}{\e}\right)^{-\frac{N-1}{2}}\right](1+o(1))\end{equation} for some positive $\bar{\mathfrak b}$, $\bar{\mathfrak c}$.
\end{lemma}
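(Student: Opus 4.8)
The plan is to repeat, almost word for word, the argument used for Lemma~\ref{proiezioni}, the only genuinely new feature being the cross--coupling term, which in the present Gross--Pitaevskii setting is \emph{cubic} in the unknowns instead of quadratic and therefore contributes only at lower order; this is exactly what makes the smallness of $\beta_{23},\beta_{32}$ superfluous.

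\textbf{Expansion of the error.} First I would insert the equations \eqref{eq:V} solved by $\eta_i$ (hence by $\varphi_1$) and \eqref{eq:Psi} solved by $\varphi_i$, together with the equations for $\Ui$, into the definition of $\mathcal E_2$, rewriting $W_2(\e x)\Psi_2-\beta_{21}\Gamma^2(\e x)\Psi_2=\omega_2(\e x)\Psi_2+(\text{lower order})$. This produces the same list of contributions as in the proof of Lemma~\ref{proiezioni}: the self--interaction $3\mu_2\big((\Uap)^2\Uam+(\Uam)^2\Uap\big)$, the potential term $(\omega_0-\omega_2(\e x))(\Uae+\varphi_2)$, the corrector terms $6\mu_2\Uap\Uam\varphi_2$, $3\mu_2\Uae\varphi_2^2$, $\mu_2\varphi_2^3$, $\beta_{21}\Uae(\varphi_1(\e x)-\varphi_1(0))$, $\beta_{21}\varphi_1(\e x)\varphi_2$, and the new cross term $-\beta_{23}\Psi_2\Psi_3^2$ with $\Psi_i=\Ui+\varphi_i$. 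Testing against $\mathcal Z_\e$ and quoting the estimates already established in Lemma~\ref{proiezioni} (which rest on Lemmas~\ref{hatvarphi}, \ref{esistphii}, Remark~\ref{decay} and Lemma~\ref{ACR}) I obtain
$$\int_{\RN}(\omega_0-\omega_2(\e x))(\Uae+\varphi_2)\mathcal Z_\e\,dx=-\partial^2_{11}\omega_2(0)\,\mathfrak b\,\e\rho_{2,\e}\,(1+o(1)),\qquad \mathfrak b:=-\int_{\RN}y_1^2\,\Ua(y)\frac{\Ua'(y)}{|y|}\,dy>0,$$
$$3\mu_2\int_{\RN}\big((\Uap)^2\Uam+(\Uam)^2\Uap\big)\mathcal Z_\e\,dx=-2\mu_2\,\mathfrak c\,e^{-2\sqrt{\omega_0}\frac{\rho_{2,\e}}{\e}}\Big(\frac{\rho_{2,\e}}{\e}\Big)^{-\frac{N-1}{2}}(1+o(1)),\qquad \mathfrak c>0,$$
while all the $\varphi$--corrections are $o\big(\e\rho_{2,\e}+e^{-2\sqrt{\omega_0}\rho_{2,\e}/\e}(\rho_{2,\e}/\e)^{-(N-1)/2}\big)$.

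\textbf{The cross term.} This is the only point that departs from the Lotka--Volterra computation. Writing $\Psi_3^2=(\Ube)^2+2\Ube\varphi_3+\varphi_3^2$ and noting that $\mathcal Z_\e=\partial_1\Uap-\partial_1\Uam$ has the same exponential decay rate $\sqrt{\omega_0}$ as $\Uae$, the leading piece of $\int_{\RN}\Psi_2\Psi_3^2\mathcal Z_\e$ is $\int_{\RN}\Uae(\Ube)^2\mathcal Z_\e$, whose integrand is bounded, via \eqref{eq:Udecay}, by a constant times $(\Uae)^2(\Ube)^2$. Both $(\Uae)^2$ and $(\Ube)^2$ decay at rate $2\sqrt{\omega_0}$, centered respectively at $\pm P_2/\e$ and $\pm P_3/\e$, which are at mutual distance $|P_2\pm P_3|/\e=|\rho_\e|/\e$; hence Lemma~\ref{ACR} yields
$$\Big|\int_{\RN}\Uae(\Ube)^2\mathcal Z_\e\,dx\Big|\lesssim e^{-2\sqrt{\omega_0}\frac{|\rho_\e|}{\e}}\Big(\frac{|\rho_\e|}{\e}\Big)^{-\sigma_N}$$
for some $\sigma_N>0$, the $\varphi_3$--corrections being smaller still by the exponential bound $|\varphi_i(x)|\lesssim \e^{N/2}\big(e^{-\gamma_i|x+P_i/\e|}+e^{-\gamma_i|x-P_i/\e|}\big)$ of the GP ansatz. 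Since $\rho_{3,\e}>0$ we have $|\rho_\e|=\sqrt{\rho_{2,\e}^2+\rho_{3,\e}^2}>\rho_{2,\e}$ and, because $\rho_{2,\e},\rho_{3,\e}\sim\e|\ln\e|$ in the admissible range, $(|\rho_\e|-\rho_{2,\e})/\e\to+\infty$, so
$$\int_{\RN}\Psi_2\Psi_3^2\mathcal Z_\e\,dx=o\!\left(e^{-2\sqrt{\omega_0}\frac{\rho_{2,\e}}{\e}}\Big(\frac{\rho_{2,\e}}{\e}\Big)^{-\frac{N-1}{2}}\right)$$
uniformly in $\beta_{23}$. This is precisely why no smallness hypothesis on $\beta_{23},\beta_{32}$ is needed. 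Collecting all the terms gives \eqref{pro1}; the identity \eqref{pro2} follows from the symmetric computation for $\int_{\RN}\mathcal E_3\mathcal Y_\e$, interchanging the indices $2\leftrightarrow3$ and the coordinates $y_1\leftrightarrow y_2$, with $\bar{\mathfrak b}:=-\int_{\RN}y_2^2\,\Ub(y)\frac{\Ub'(y)}{|y|}\,dy>0$ and $\bar{\mathfrak c}>0$.

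\textbf{Main difficulty.} The only nontrivial point is the last display: one must check that $\int_{\RN}\Uae(\Ube)^2\mathcal Z_\e$ is of strictly lower order than $e^{-2\sqrt{\omega_0}\rho_{2,\e}/\e}(\rho_{2,\e}/\e)^{-(N-1)/2}$, uniformly for $\rho_{i,\e}$ in their intervals. This boils down to the elementary gap estimate $|\rho_\e|-\rho_{2,\e}\gtrsim\e|\ln\e|$, immediate from $\rho_{2,\e},\rho_{3,\e}\sim\e|\ln\e|$; every remaining estimate is literally the one already proved for Lemma~\ref{proiezioni}.
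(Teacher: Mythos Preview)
Your proof is correct and follows essentially the same route as the paper: both arguments reduce the computation to the cross term $\int_{\RN}\Uae(\Ube)^2\mathcal Z_\e$, then show it is $o\big(e^{-2\sqrt{\omega_0}\rho_{2,\e}/\e}(\rho_{2,\e}/\e)^{-(N-1)/2}\big)$ using that $|\rho_\e|>\rho_{2,\e}$ with $(|\rho_\e|-\rho_{2,\e})/\e\to+\infty$. The only minor difference is that the paper invokes Lemma~\ref{PV} to obtain the \emph{precise} asymptotic of the cross term before discarding it, whereas you use the cruder upper bound via Lemma~\ref{ACR}; your route is slightly more economical and equally sufficient, though for $N=3$ Lemma~\ref{ACR} produces a $\log(|\rho_\e|/\e)$ factor you should acknowledge in your bound (it is harmless, being dominated by the exponential gap).
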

\begin{proof}
We evaluate only the interaction term.
Indeed, by using Lemma \ref{PV}
$$\begin{aligned}
\int_{\mathbb R^N}\Uae(\Ube)^2\mathcal Z_\e\, dx &=\frac 12 \int_{\mathbb R^N}\partial_1 U^2(y)U^2(y+\zeta)\, dy(1+o(1))+\frac 12 \int_{\mathbb R^N}\partial_1 U^2(y)U^2(y-\xi)\, dy(1+o(1))\\
&-\frac 12 \int_{\mathbb R^N}\partial_1 U^2(y)U^2(y+\xi)\, dy(1+o(1))-\frac 12 \int_{\mathbb R^N}\partial_1 U^2(y)U^2(y-\zeta)\, dy\\
&=-2\mathfrak d \left\{\begin{aligned} &\frac{\rho_{2, \e}}{|\rho_\e|}e^{-2\sqrt{\omega_0}\frac{|\rho_\e|}{\e}}\left(\frac{|\rho_\e|}{\e}\right)^{-\frac 12}\, \quad &\mbox{if}\, N=2\\
&\frac{\rho_{2, \e}}{|\rho_\e|}e^{-2\sqrt{\omega_0}\frac{|\rho_\e|}{\e}}\left(\frac{|\rho_\e|}{\e}\right)^{-2}\ln\frac{|\rho_\e|}{\e}\, \quad &\mbox{if}\, N=3\end{aligned}\right.
\end{aligned}$$
and it is easy to see that $$e^{-2\sqrt{\omega_0}\frac{|\rho_\e|}{\e}}=o\left(e^{-2\sqrt{\omega_0}\frac{\rho_{2,\e}}{\e}}\right)$$ since 
$|\rho_\e|>\rho_{2,\e}$.

\end{proof}

\begin{proof}[Theorem \ref{thm3}]
It follows reason as in the proof of Theorem \ref{thm1} and Theorem \ref{thm2}.
\end{proof}

\begin{remark}
If we want to consider  in \eqref{DS} or in \eqref{GP} the case of $k\geq 3$ equations then we conjecture that the possible configuration of peaks  is the following 

\begin{equation}\label{P1}
P_i=\rho_{i, \e} P_{0, i}=\rho_{i, \e}\left(\cos\frac{(i-2)\pi}{k-1}, \sin\frac{(i-2)\pi}{k-1}, 0, \ldots, 0\right) \,\, i=2, \ldots k
\end{equation}
and
\begin{equation}\label{P2}
\rho_{i, \e} \in \left[\left( \alpha_{i}-\delta\right)\e|\ln\e|, \left(\alpha_i+\delta\right)\e|\ln\e|\right],\,\, \frac{\rho_{i, \e}}{\e}\to+\infty\, \mbox{as}\,\, \e\to0
\end{equation}
where $\alpha_i$ are suitable chosen depending on $\omega_i$. Then the profile of the solution that we look for is
$$u_1(x)=\Upsilon(x)+\varphi_1(x)+\Phi_1(x),\quad u_i(x)= \Ui(x)+\varphi_i(x)+\Phi_i(x), i=2, \ldots, k$$
where
\begin{equation}\label{hatU}
 \Ui(x)=\Um\left(x-\frac{P_i}{\e}\right)+\Um\left(x+\frac{P_i}{\e}\right):=\Umm+\Ump
\end{equation}
and $\varphi_1, \varphi_i$ with $i=2, \ldots, k$ are the natural extension of the corrections previously defined.
\end{remark}

\section{Appendix}\label{appendix}
First we recall the following result.
\begin{lemma}[Lemma 3.7 in \cite{acr}]\label{ACR}
Let $u, v : \mathbb{R}^N \to \mathbb{R}$ be two positive continuous radial functions such that
\[
 u(x) \sim |x|^a e^{-b|x|}, \quad v(x) \sim |x|^{a'} e^{-b'|x|} 
 \]
as $|x| \to \infty$, where $a, a' \in \mathbb{R}$, and $b, b' >0$. Let $\xi \in \mathbb{R}^N $ such that $|\xi| \to \infty$. We denote $u_{\xi}(x)=u(x-\xi)$. Then the following asymptotic estimates hold:
\begin{itemize}
\item[(i)] If $b < b'$,
\[ 
\int_{\mathbb{R}^N} u_{\xi} v\sim e^{-b|\xi|} |\xi|^{a}. 
\]
A similar expression holds if $b > b'$, by replacing $a$ and $b$ with $a'$ and $b'$. 
\item[(ii)] If $b=b'$, suppose that $a \geq a'$. Then:
\[ \int_{\mathbb{R}^N} u_{\xi} v\sim
\begin{cases}
e^{-b|\xi|} |\xi|^{a+a'+\frac{N+1}{2}} & \text{ if } a' > -\frac{N+1}{2},\\
e^{-b|\xi|} |\xi|^{a} \log |\xi| & \text{ if } a' = -\frac{N+1}{2},\\
e^{-b|\xi|} |\xi|^{a}& \text{ if } a' < -\frac{N+1}{2}.
\end{cases} \]
\end{itemize}
\end{lemma}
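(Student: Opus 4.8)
The plan is to prove this standard interaction estimate for two exponentially decaying radial functions by a Laplace/Watson-type analysis, after reducing to the model profiles. By the radial symmetry of $u$ and $v$ I may assume $\xi=|\xi|e_{1}$. First I would replace $u,v$ by $|x|^{a}e^{-b|x|}$ and $|x|^{a'}e^{-b'|x|}$ on $\{|x|\ge R_{0}\}$ for a large fixed $R_{0}$, the error being a multiplicative $1+o(1)$, and then split $\RN$ into the ball $B_{R}(0)$, the ball $B_{R}(\xi)$, a tube around the open segment $(0,\xi)$, and the region far from that segment; the bounded pieces where the asymptotic profiles are not yet valid only affect the multiplicative constants.

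On $B_{R}(0)$ one has $u_{\xi}(x)=u(x-\xi)\sim |\xi|^{a}e^{-b|x-\xi|}$ and, writing $|x-\xi|=|\xi|-x_{1}+O(|x|^{2}/|\xi|)$, this region contributes $e^{-b|\xi|}|\xi|^{a}\int_{B_{R}(0)}v(x)e^{bx_{1}}\,dx\,(1+o(1))$; symmetrically $B_{R}(\xi)$ contributes $e^{-b'|\xi|}|\xi|^{a'}\int_{B_{R}(0)}u(x)e^{-b'x_{1}}\,dx\,(1+o(1))$. In case (i), when $b<b'$, the first term dominates the second, and letting $R\to\infty$ the integral $\int_{\RN}v(x)e^{bx_{1}}\,dx$ converges to a finite positive constant (independent of the direction of $\xi$, since $v$ is radial and $b<b'$); the tube and far regions are then of strictly lower order because on them $|x|+|x-\xi|\ge|\xi|$ with the exponent $b|x-\xi|+b'|x|$ minimized, over the segment, only at the endpoint $x=0$. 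This gives $\int u_{\xi}v\sim e^{-b|\xi|}|\xi|^{a}$, and the symmetric statement follows when $b>b'$.

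The heart of the matter is case (ii), $b=b'$, where both endpoint contributions are of order $e^{-b|\xi|}$ and, crucially, the whole segment $[0,\xi]$ contributes since $|x|+|x-\xi|=|\xi|$ identically there. I would run a transverse Laplace expansion: near a point $se_{1}$ with $0<s<|\xi|$, writing $x=(x_{1},x')$, one has $|x|+|x-\xi|=|\xi|+\tfrac12|x'|^{2}\bigl(\tfrac1s+\tfrac1{|\xi|-s}\bigr)+\cdots$, so the Gaussian integral in $x'$ produces a factor comparable to $\bigl(\tfrac{s(|\xi|-s)}{|\xi|}\bigr)^{(N-1)/2}$ while the weights give $s^{a}(|\xi|-s)^{a'}$, whence
\[
\int u_{\xi}v\ \sim\ e^{-b|\xi|}\,|\xi|^{-(N-1)/2}\int_{0}^{|\xi|}s^{a+(N-1)/2}(|\xi|-s)^{a'+(N-1)/2}\,ds .
\]
Rescaling $s=|\xi|\sigma$, the remaining integral is a Beta-type integral on $(0,1)$, and its behaviour at $\sigma=1$ (where the transverse curvature degenerates and the expansion must be matched to the ball estimate around $\xi$) yields precisely the trichotomy: if $a'>-\tfrac{N+1}{2}$ it converges and one gets $e^{-b|\xi|}|\xi|^{a+a'+(N+1)/2}$; if $a'=-\tfrac{N+1}{2}$ it diverges logarithmically, the divergence being cut off at the scale of $B_{R}(\xi)$, giving $e^{-b|\xi|}|\xi|^{a}\log|\xi|$; if $a'<-\tfrac{N+1}{2}$ the power-type divergence is absorbed into the $B_{R}(\xi)$ contribution $e^{-b|\xi|}|\xi|^{a'}$, which is lower order than the $B_{R}(0)$ contribution $e^{-b|\xi|}|\xi|^{a}$ (here $a\ge a'$ is used), giving $e^{-b|\xi|}|\xi|^{a}$.

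The main obstacle is making the transverse Laplace expansion uniform along the whole segment, especially near the two endpoints where $s\to0$ or $|\xi|-s\to0$ and the quadratic form degenerates: one must interpolate cleanly between the Gaussian-tube regime in the interior and the two ball regimes $B_{R}(0)$, $B_{R}(\xi)$ without double counting, and verify that all cross terms and higher-order Taylor remainders are genuinely lower order, the far region being controlled by the strict inequality $|x|+|x-\xi|>|\xi|$ off the segment. Once this decomposition and bookkeeping are in place, each piece is a routine (if delicate) integral, and assembling them produces the three stated cases.
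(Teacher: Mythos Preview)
The paper does not prove this lemma at all: it is quoted verbatim in the Appendix as ``Lemma~3.7 in \cite{acr}'' and used as a black box throughout (in Lemmas~\ref{inv}, \ref{errore}, \ref{prolinN}, \ref{proiezioni}). So there is no in-paper proof to compare against; your proposal supplies an argument where the authors simply cite one.

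Your Laplace/Watson strategy is the standard and correct route to this estimate, and the outline is sound: decomposition into the two endpoint balls, the tube around the segment $[0,\xi]$, and the far region; transverse Gaussian integration along the tube yielding the Beta-type integral; and the trichotomy coming from integrability of that Beta integral at an endpoint. One bookkeeping slip: at $x=se_{1}$ with $0<s<|\xi|$ you have $|x|=s$ and $|x-\xi|=|\xi|-s$, so the weights are $(|\xi|-s)^{a}\,s^{a'}$, not $s^{a}(|\xi|-s)^{a'}$ as you wrote. Consequently the sensitive endpoint governed by $a'$ is $\sigma=0$ (the ball $B_{R}(0)$), not $\sigma=1$ (the ball $B_{R}(\xi)$). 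This swap does not change the final asymptotics---the Beta integral is symmetric under $(s,a)\leftrightarrow(|\xi|-s,a')$ and the answer depends only on $a+a'$ and $\min(a,a')=a'$---but your matching narrative in the third case (``the divergence is absorbed into the $B_{R}(\xi)$ contribution $e^{-b|\xi|}|\xi|^{a'}$, which is lower order than the $B_{R}(0)$ contribution'') has the two balls interchanged. With the correct weights, in the regime $a'<-\tfrac{N+1}{2}$ the tube integral localises near $s=0$ and contributes at order $e^{-b|\xi|}|\xi|^{a}$, which combines with (rather than being dominated by) the $B_{R}(0)$ contribution of the same order; the $B_{R}(\xi)$ piece $e^{-b|\xi|}|\xi|^{a'}$ is then the lower-order one because $a\ge a'$. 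Fixing this orientation makes the matching between tube and balls coherent and the argument goes through.
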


Let $U_{\lambda, \mu}$ be the solution of $-\Delta U_{\l, \mu}+\l U_{\l, \mu}=\mu U_{\l, \mu}^3$.\\\\ In \cite{PV} (Lemma A.2) it is shown the following result.
\begin{lemma}\label{PV}
Let $s, t\geq 1$ and consider the following integral 
$$\Theta_{s, t}(\zeta):=\int_{\RN} U^s_{\l, \mu}(x+\zeta)\partial_{x_1}U^t_{\l, \mu}(x)\, dx, \quad\zeta\in\RN.$$
\begin{itemize}
\item[(i)] If $s<t$ then $$\Theta_{s, t}(\zeta)\sim \mathfrak c s\frac{\zeta_1}{|\zeta|}e^{-s\sqrt{\l}|\zeta|}|\zeta|^{-s\frac{N-1}{2}},\qquad \mbox{as}\,\, |\zeta|\to+\infty;$$
\item[(ii)] If $s=t$ then
$$ \Theta_{s, t}(\zeta)\sim\left\{\begin{aligned}&\mathfrak c s\frac{\zeta_1}{|\zeta|}e^{-s\sqrt{\l}|\zeta|}|\zeta|^{-s(N-1)+\frac{N+1}{2}},\quad &\mbox{if}\,\, s<\frac{N+1}{N-1}\\
&\mathfrak c s\frac{\zeta_1}{|\zeta|}e^{-s\sqrt{\l}|\zeta|}|\zeta|^{-s\frac{N-1}{2}}\ln |\zeta|,\quad &\mbox{if}\,\, s=\frac{N+1}{N-1}\\
&\mathfrak c s\frac{\zeta_1}{|\zeta|}e^{-s\sqrt{\l}|\zeta|}|\zeta|^{-s\frac{N-1}{2}},\quad &\mbox{if}\,\, s>\frac{N+1}{N-1}\end{aligned}\right.$$
\end{itemize}
as $|\zeta|\to+\infty$ where $\mathfrak c$ is a positive constant. In the case $N=1$ the first option in (ii) holds, namely as $|\zeta|\to+\infty$
$$\Theta_{s, t}(\zeta)\sim\mathfrak c s\frac{\zeta_1}{|\zeta|}e^{-s\sqrt{\l}|\zeta|}|\zeta|.$$
\end{lemma}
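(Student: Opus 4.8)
The plan is to reduce the gradient interaction $\Theta_{s,t}$ to the plain interaction integral already controlled by Lemma \ref{ACR}, and to read off the directional factor $\frac{\zeta_1}{|\zeta|}$ through an integration by parts combined with the radial symmetry of $U=U_{\lambda,\mu}$. Since $U^t$ decays exponentially while $U^s(\cdot+\zeta)$ is bounded, integrating by parts in the variable $x_1$ (the boundary terms at infinity vanish) and using $\partial_{x_1}(U^s(x+\zeta))=\partial_{\zeta_1}(U^s(x+\zeta))$ gives
$$\Theta_{s,t}(\zeta)=\int_{\RN}U^s(x+\zeta)\partial_{x_1}U^t(x)\,dx=-\int_{\RN}\partial_{x_1}(U^s(x+\zeta))U^t(x)\,dx=-\partial_{\zeta_1}\mathcal I(\zeta),$$
where $\mathcal I(\zeta):=\int_{\RN}U^s(x+\zeta)U^t(x)\,dx$. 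By the change of variables $y=x+\zeta$ and the rotational invariance of $U$, the function $\mathcal I$ is radial, say $\mathcal I(\zeta)=m(|\zeta|)$; hence $\Theta_{s,t}(\zeta)=-m'(|\zeta|)\frac{\zeta_1}{|\zeta|}$, which already accounts for the factor $\frac{\zeta_1}{|\zeta|}$ appearing in every case of the statement.

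It then remains to determine the asymptotics of $m(r)$ and $m'(r)$ as $r\to+\infty$. Writing $m(r)=\int_{\RN}U^t(x-r\,e_1)\,U^s(x)\,dx$ and using the decay $U(x)\sim C_0|x|^{-\frac{N-1}{2}}e^{-\sqrt\lambda|x|}$ of \eqref{eq:Udecay}, we apply Lemma \ref{ACR} with $u=U^t$ (parameters $a=-\tfrac{t(N-1)}{2}$, $b=t\sqrt\lambda$) and $v=U^s$ (parameters $a'=-\tfrac{s(N-1)}{2}$, $b'=s\sqrt\lambda$). If $s<t$ the slower factor $U^s$ dominates (case (i)), yielding $m(r)\sim\mathfrak c\, r^{-\frac{s(N-1)}{2}}e^{-s\sqrt\lambda r}$. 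If $s=t$ we are in the borderline case (ii), and the threshold $a'=-\tfrac{N+1}{2}$ becomes precisely $s=\tfrac{N+1}{N-1}$, producing respectively the powers $r^{-s(N-1)+\frac{N+1}{2}}$ (for $s<\tfrac{N+1}{N-1}$), $r^{-\frac{s(N-1)}{2}}\log r$ (for $s=\tfrac{N+1}{N-1}$), and $r^{-\frac{s(N-1)}{2}}$ (for $s>\tfrac{N+1}{N-1}$). Differentiating these asymptotics is legitimate because $m'(r)=-\int_{\RN}U^s(x)(\partial_{x_1}U^t)(x-r\,e_1)\,dx$ is itself an interaction integral of the same type: Laplace's method localizes it on the same region that dominates $m(r)$, and an integration by parts there replaces $\partial_{x_1}U^t$ by $-s\sqrt\lambda\,U^t$ up to lower order, so that $m'(r)\sim-s\sqrt\lambda\, m(r)$. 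Substituting into $\Theta_{s,t}(\zeta)=-m'(|\zeta|)\frac{\zeta_1}{|\zeta|}$ gives in each regime exactly the claimed estimate, with $\mathfrak c$ the positive constant from Lemma \ref{ACR} times $\sqrt\lambda$ and the explicit factor $s$ retained.

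The case $N=1$ is treated by a direct computation: here $U$ is the explicit soliton profile with $U(x)\sim C_0 e^{-\sqrt\lambda|x|}$ and no polynomial correction, and splitting $\R$ (for $\zeta=|\zeta|e_1$, say) into $(-\infty,0)$, $(0,|\zeta|)$ and $(|\zeta|,+\infty)$ shows that the ``bridge'' interval $(0,|\zeta|)$, on which the two exponentials balance, contributes the extra linear factor $|\zeta|$. This is exactly the first option in (ii) read with $N=1$, since $-s(N-1)+\tfrac{N+1}{2}=1$ and the condition $s<\tfrac{N+1}{N-1}$ is then vacuous.

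The main obstacle is the borderline case $s=t$ (i.e. $b=b'$ in Lemma \ref{ACR}): there $\Theta_{s,t}$ is not localized near $x=0$ or $x=-\zeta$, but receives its leading contribution from the whole ``shadow'' segment joining these two points, so one must perform the Laplace-type analysis carefully enough to both isolate the correct polynomial or logarithmic power and justify that differentiation in $r$ costs a factor $-s\sqrt\lambda$ uniformly, so that the asymptotic expansion of $m$ may be differentiated term by term. Away from this case everything collapses to Lemma \ref{ACR} together with the integration by parts above. (This is precisely the content of Lemma A.2 in \cite{PV}.)
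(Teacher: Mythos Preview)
The paper does not actually prove this lemma: it is quoted verbatim from \cite{PV} (Lemma A.2), as stated in the line preceding the statement, so there is no ``paper's own proof'' to compare against. Your sketch is the standard argument and is correct in outline: integrating by parts to write $\Theta_{s,t}(\zeta)=-\partial_{\zeta_1}\mathcal I(\zeta)$, exploiting radiality of $\mathcal I$ to extract the factor $\zeta_1/|\zeta|$, and then feeding $U^s$ and $U^t$ into Lemma~\ref{ACR} yields exactly the claimed powers and exponents for $m(r)=\mathcal I(r e_1)$.

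One small inaccuracy worth correcting: in your justification of $m'(r)\sim -s\sqrt\lambda\,m(r)$ you write that ``an integration by parts there replaces $\partial_{x_1}U^t$ by $-s\sqrt\lambda\,U^t$''. This is not quite how the factor $s\sqrt\lambda$ arises. After the integration by parts one has $m'(r)=\int(\partial_{x_1}U^s)(x+re_1)\,U^t(x)\,dx$, and the factor $-s\sqrt\lambda$ comes from the pointwise asymptotics $\partial_{x_1}U^s(y)=sU^{s-1}(y)U'(|y|)\tfrac{y_1}{|y|}\sim -s\sqrt\lambda\,U^s(y)$ for $|y|$ large (using $U'/U\to-\sqrt\lambda$ from \eqref{eq:Udecay}), evaluated on the region where the integral concentrates. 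Your honest acknowledgment that the borderline case $s=t$ requires a genuine Laplace-type analysis along the segment joining $0$ and $-\zeta$ (rather than localization near a single point) is exactly right, and is where the work in \cite{PV} lies.
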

\begin{remark}\label{ossdecay}
We remark that if $U_{\l_i, \mu_i}$ satisfies $-\Delta U_{\l_i, \mu_i}+\l_i U_{\l_i, \mu_i}=\mu_i U_{\l_i, \mu_i}^3$ and let $$\Theta^{i, j}_{s, t}(\zeta):=\int_{\RN}U^s_{\l_j, \mu_j}(x+\zeta)\partial_{x_\ell} U^t_{\l_i, \mu_i}(x)\, dx,\quad \zeta\in\RN,\,\, \ell=1, 2, 3.$$  Then, by using the above lemmas, we get that 
\begin{itemize}
\item[(i)]
if $s\sqrt{\l_j}< t\sqrt{\l_i}$ for $i\neq j$ 
$$\Theta^{i, j}_{s, t}(\zeta)\sim \mathfrak c s \frac{\zeta_\ell}{|\zeta|}e^{-s \sqrt{\lambda_j}|\zeta|}|\zeta|^{-s\frac{N-1}{2}},\qquad \mbox{as}\,\, |\zeta|\to+\infty.$$
\item[(ii)] if $s\sqrt{\l_j}= t\sqrt{\l_i}$ for $i\neq j$, suppose that $s\leq t$ then 
$$ \Theta^{i, j}_{s, t}(\zeta)\sim\left\{\begin{aligned}&\mathfrak c \frac{\zeta_\ell}{|\zeta|}e^{-s\sqrt{\l_j}|\zeta|}|\zeta|^{-\frac{N-1}{2}(s+t)+\frac{N+1}{2}},\quad &\mbox{if}\,\, t<\frac{N+1}{N-1}\\
&\mathfrak c \frac{\zeta_\ell}{|\zeta|}e^{-s\sqrt{\l_j}|\zeta|}|\zeta|^{-s\frac{N-1}{2}}\ln |\zeta|,\quad &\mbox{if}\,\, t=\frac{N+1}{N-1}\\
&\mathfrak c \frac{\zeta_\ell}{|\zeta|}e^{-s\sqrt{\l_j}|\zeta|}|\zeta|^{-s\frac{N-1}{2}},\quad &\mbox{if}\,\, t>\frac{N+1}{N-1}\end{aligned}\right.$$
\item[(iii)] if $s\sqrt{\l_j}> t\sqrt{\l_i}$ for $i\neq j$ then
$$\Theta^{i, j}_{s, t}(\zeta)=-\int_{\RN}U_{\l_i, \mu_i}^t(x+(-\zeta))\partial_{x_1}U_{\l_j, \mu_j}^s(x)\, dx \sim \mathfrak c t\frac{\zeta_\ell}{|\zeta|}e^{-t \sqrt{\lambda_i}|\zeta|}|\zeta|^{-t\frac{N-1}{2}},\quad \mbox{as}\,\, |\zeta|\to+\infty.$$
\end{itemize}
\end{remark}

\end{document}